\numberwithin{equation}{section} 
\theoremstyle{plain}
\newtheorem{proposition}{Proposition}[section]
\newaliascnt{lemma}{proposition} 
\newtheorem{lemma}[lemma]{Lemma}
\Crefname{lemma}{Lemma}{Lemmas}
\newaliascnt{theorem}{proposition} 
\newtheorem{theorem}[theorem]{Theorem}
\newaliascnt{corollary}{proposition} 
\newtheorem{corollary}[corollary]{Corollary}
\newaliascnt{hypothesis}{proposition}
\theoremstyle{definition}
\newaliascnt{definition}{proposition} 
\newtheorem{definition}[definition]{Definition}
\Crefname{definition}{Definition}{Definitions}
\newaliascnt{problem}{proposition} 
\newaliascnt{example}{proposition} 
\newaliascnt{assumption}{proposition} 
\theoremstyle{remark}
\newaliascnt{remark}{proposition} 
\newtheorem{remark}[remark]{Remark}
\newtheorem{claim}{Claim}[proposition]
\def\equationautorefname~#1\null{%
	(#1)\null
}
\newcommand{\R}{\mathbb{R}}
\newcommand{\N}{\mathbb{N}}
\newcommand{\V}{\mathcal{V}}
\newcommand{\scurv}{\kappa}
\renewcommand{\S}{\mathbb{S}}
\newcommand{\A}{\mathcal{A}}
\renewcommand{\H}{\mathcal{H}}
\newcommand{\W}{\mathcal{W}}
\newcommand{\diam}{\mathrm{diam}}
\newcommand*{\dd}{\mathop{}\!\mathrm{d}}
\def\nicefrac#1#2{%
    \raise.5ex\hbox{$#1$}%
    \kern-.15em/\kern-.05em%
    \lower.25ex\hbox{$#2$}}
\title{Introducing spontaneous curvature to the Helfrich flow: Singularities and convergence}
\author{Manuel Schlierf\thanks{Institute of Applied Analysis, Ulm University, Helmholtzstra\ss e 18, 89081 Ulm, Germany. \texttt{manuel.schlierf@uni-ulm.de}}}
\date{April 19, 2024}
\begin{document}
\maketitle
\begin{abstract}
\noindent\textbf{Abstract:} While there are various results on the long-time behavior of the Willmore flow, the Helfrich flow with non-zero spontaneous curvature as its natural generalization is not yet well-understood. Past results for the gradient flow of a locally area- and volume-constrained Willmore flow indicate the existence of finite-time singularities which corresponds to the scaling-behavior of the underlying energy. However, for a non-vanishing spontaneous curvature, the scaling behavior is not quite as conclusive.

Indeed, in this article, we find that a \emph{negative} spontaneous curvature corresponds to finite-time singularities of the locally constrained Helfrich flow if the initial surface is energetically close to a round sphere. Conversely however, in the case of a \emph{positive} spontaneous curvature, we find a positive result in terms of the convergence behavior: The locally area-constrained Helfrich flow starting in a spherical immersion with suitably small Helfrich energy exists globally and converges to a Helfrich immersion after reparametrization. Moreover, this energetic smallness assumption is given by an explicit energy threshold depending on the spontaneous curvature and the local area constraint of the energy.

\end{abstract}

\bigskip
\noindent \textbf{Keywords and phrases:} Helfrich flow, Canham-Helfrich energy, Willmore energy, Willmore surfaces, Geometric flows, \L ojasiewicz-Simon inequality.

\noindent \textbf{MSC(2020)}: 53E40, 49Q10 (primary), 35B40, 35K41 (secondary).


\section{Introduction}

Since the works of Canham in \cite{canham1970} and Helfrich in \cite{helfrich1973}, the study of the shape of lipid bilayers and biological membranes, especially the shape of red blood cells, is closely related to the field of the calculus of variations. Indeed, they model the shape of such objects by studying the critical points of the so-called \emph{Canham-Helfrich functional}. Due to its analytical challenges, the variational study of this functional has remained an intriguing topic also for geometers and analysts with many contributions over the last decades. 

For an immersion $f\colon\Sigma\to\R^3$ of a closed, oriented surface $\Sigma$, $g_f=f^*\langle\cdot,\cdot\rangle$ denotes the pull-back metric on $\Sigma$, i.e. $g_{ij}=\langle\partial_i f,\partial_jf\rangle$ in local coordinates, with induced measure $\mu$. One also writes $\A(f)=\mu(\Sigma)$ for the \emph{area}. Further, $\nu\colon\Sigma\to\R^3$ is the unique smooth unit normal field induced by the orientation of $\Sigma$. In local coordinates with respect to the orientation of $\Sigma$, one has
\begin{equation}
  \nu=\frac{\partial_1 f\times\partial_2 f}{|\partial_1 f\times \partial_2 f|}.
\end{equation}
Throughout this article, given an immersion $f$, we always choose the orientation of $\Sigma$ such that the signed volume is non-negative, i.e.
\begin{equation}\label{eq:choice-of-orientation}
  \V(f)=-\frac{1}{3}\int_{\Sigma}\langle f,\nu\rangle\dd\mu \geq 0.
\end{equation}
Particularly, if $f$ is an embedding, then $\nu$ is the inward pointing normal field. The (scalar) second fundamental form of $f$ is the $(2,0)$-tensor determined by $A_{ij}=\langle\partial^2_{ij}f,\nu\rangle$ in local coordinates. Its trace is the mean curvature $H=g^{ij}A_{ij}$ where $(g^{ij}) = (g_{ij})^{-1}$. Particularly, if $\Sigma=\S^2$ and $f$ is a parametrization of a round sphere with radius $r>0$, $H\equiv\frac{2}{r}$. The trace-free second fundamental form is given by $A_{ij}^0=A_{ij}-\frac12 H g_{ij}$. For any $c_0\in\R$, define the \emph{Helfrich energy} with \emph{spontaneous curvature} $c_0$ by
\begin{equation}
  \H_{c_0}(f)=\frac{1}{4}\int_{\Sigma}(H-c_0)^2\dd\mu
\end{equation}
where in the special case $c_0=0$, $\W(f)=\H_{0}(f)$ is also called \emph{Willmore energy}. Also this special case of the Helfrich energy is already of high interest. Indeed, from a geometric point of view, the Willmore functional is conformally invariant which, from a PDE-perspective, introduces various challenges when investigating the associated Euler-Lagrange equation, the existence of minimizers or the associated $L^2(\dd\mu)$-gradient flow. In \cite{willmore1965}, Willmore shows some elementary properties of $\W$ and proposes his famous conjecture. Particularly, $\W(f)\geq 4\pi$ and one has equality only for round spheres. Further results on the minimization of the Willmore functional among closed surfaces, especially with prescribed genus, are obtained in \cite{simon1993,bauerkuwert2003}. An intriguing property of $\W$ and $\H_{c_0}$ from a PDE-perspective is the fact that one faces the critical case with respect to Sobolev embeddings. In the study of the minimization problem, some weaker concepts of surface or parametrization which come with a suitable compactness theory are required. A first concept in the direction of such weak notions is that of \emph{varifolds}. While it is rather well-suited for the analysis of the Willmore functional, e.g. cf. \cite{simon1993,kuwertmueller2022}, this weak notion of convergence comes with additional difficulties for the Helfrich functional with non-zero spontaneous curvature: even lower semi-continuity fails in general, cf. \cite{grossebrauckmann1993,eichmann2020,brazdalussardistefanelli2020}. One way of circumventing this issue is employing a concept of weak Sobolev immersions that was already used in the context of the Willmore energy in \cite{riviere2014,kuwertli2012}, cf. \cite{mondinoscharrer2020,ruppscharrer2023}.  

A fundamental geometric property of curvature functionals is the encoded topological information. In case of the Willmore energy, the Li-Yau inequality in \cite{liyau1982} shows that any immersion $f$ of a closed surface with $\W(f)<8\pi$ is already an embedding. While this yields also some immediate corollaries for the Helfrich functional if one uses ad-hoc bounds such as \eqref{eq:est-Will-energy}, in \cite{ruppscharrer2023}, a monotonicity-formula-based approach is used to deduce a Li-Yau inequality specifically for the Helfrich functional. 

For $c_0,\lambda\in\R$, the area-penalized Helfrich energy is given by
\begin{equation}
  \H_{c_0,\lambda}(f) = \H_{c_0}(f) + \frac12\lambda\A(f).
\end{equation}
In this article, we study the evolution equation given by the $L^2(\dd\mu)$ gradient flow of $\H_{c_0,\lambda}$, that is, for an immersion $f_0\colon\Sigma\to\R^3$, we study
\begin{equation}\label{eq:helfrich-iv-problem}
  \begin{cases}
    \partial_t f =-\big( \Delta H + |A^0|^2H - c_0\bigl( |A^0|^2-\frac12 H^2\bigr) - (\lambda+\frac12c_0^2) H \big)\nu&\text{on $[0,T)\times\Sigma$}\\
    f(0)=f_0&\text{on $\Sigma$}
  \end{cases}
\end{equation}
where $\Delta$ is the Laplace-Betrami operator on $(\Sigma,g_f)$, cf. \cite[Lemma~2.1]{mccoywheeler2016}. In the following, a family of immersions $f\colon[0,T)\times\Sigma\to\R^3$ satisfying \eqref{eq:helfrich-iv-problem} is called a $(c_0,\lambda)$-\emph{Helfrich flow} starting in $f_0$. Moreover, an immersion $f\colon\Sigma\to\R^3$ for which the right-hand side in \eqref{eq:helfrich-iv-problem} vanishes is referred to as a $(c_0,\lambda)$-\emph{Helfrich immersion}. We sometimes refer to the parameter $\lambda\geq 0$ as a \emph{local} area constraint. This is simply due to the local nature of the resulting evolution equation in \eqref{eq:helfrich-iv-problem}.

Already the case $c_0=\lambda=0$, i.e. the so-called \emph{Willmore flow}, is analytically challenging. First results are obtained in \cite{simonett2001} where Simonett shows well-posedness and stability of the round sphere, the absolute minimizer of the Willmore energy. In \cite{kuwertschaetzle2002}, Kuwert-Schätzle use parabolic interpolation techniques based on the Michael-Simon-Sobolev inequality in \cite{michaelsimon1973} to show a life-span theorem for the Willmore flow if no curvature concentration occurs. If $f_0$ is energetically close to the round sphere, in \cite{kuwertschaetzle2001}, they deduce global existence and convergence to the round sphere. Finally, in \cite{kuwertschaetzle2004}, Kuwert-Schätzle show that the Willmore flow starting in any spherical immersion with initial energy below $8\pi$ exists globally and converges to a round sphere --- a sharp energy threshold as analytically shown in \cite{blatt2009}. In \cite{rupp2023,rupp2024}, Rupp considers non-local variants of the Willmore flow where the volume or the isoperimetric ratio are kept fixed along the flow. Under different energy-thresholds for $f_0$, Rupp also deduces global existence and convergence to the round sphere in case of the volume preserving flow and to some Helfrich immersion (with $c_0=0$) for the flow with constant isoperimetric ratio.

The case where $c_0=0$ but $\lambda>0$ is already studied in \cite{mccoywheeler2016,blatt2019} where McCoy-Wheeler and Blatt find especially that the $(0,\lambda)$-Helfrich flow exhibits finite-time singularities. This behavior of the flow corresponds to the scaling behavior of the energy $\H_{0,\lambda}$. Furthermore, an analysis for the Helfrich flow similar to Simonett's initial work is undertaken in \cite{kohsakanagasawa2006} where local existence of the $(c_0,\lambda)$-Helfrich flow is shown and the center-manifold-method is employed to analyze the stability of the round sphere. As it turns out, the results already significantly depend on the choice of parameters. There are also some numerical studies of the Helfrich flow, e.g. cf. \cite{barrettgarckenuernberg2021}. 

This work's goal is giving a contribution to understanding the effects of a non-zero spontaneous curvature $c_0$ on the convergence behavior of the Helfrich flow. First, in the spirit of \cite{mccoywheeler2016,blatt2019}, we obtain the following ``negative result'' for a negative spontaneous curvature $c_0$.

\begin{theorem}\label{prop:neg-result}
  There exist universal constants $C>0$ and $\bar{\alpha}>0$ such that, if $c_0<0$, $\lambda\geq 0$ and $f_0\colon\S^2\to\R^3$ is an embedding with \eqref{eq:choice-of-orientation} and
  \begin{equation}\label{eq:sm-initial-en}
    \int_{\S^2}|A^0_{f_0}|^2\dd\mu_{f_0} \leq \exp\Big(-C\big((\H_{c_0,\lambda}(f_0))^2-(4\pi)^2\big)\Big)\bar{\alpha},
  \end{equation}
  then the maximal $(c_0,\lambda)$-Helfrich flow $f\colon[0,T)\times\S^2\to\R^3$ starting in $f_0$ satisfies 
  \begin{equation}
    T < \frac{4\big((\H_{c_0,\lambda}(f_0))^2-(4\pi)^2\big)}{\pi^2(2\lambda+c_0^2)^2}
  \end{equation}
  and $\A(f(t))\to 0$, $\H_{c_0,\lambda}(f(t))\to 4\pi$ and $\W(f(t))\to 4\pi$ for $t\nearrow T$. Moreover, there exist sequences $t_j\nearrow T$, $r_j\searrow 0$ and $x_j\in\R^3$ such that, up to reparametrization, $(f(t_j)-x_j)/r_j$ smoothly converges to a round sphere.
\end{theorem}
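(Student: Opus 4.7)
The plan is to first derive monotonicity and a universal lower bound on $\H_{c_0,\lambda}$, then close a Cauchy--Schwarz-driven differential inequality on the deficit $\H_{c_0,\lambda}(f(t))^2-(4\pi)^2$ to force the announced finite-time blow-up, and finally conclude area collapse together with subsequential sphere convergence through a Kuwert--Sch\"atzle-type life-span lemma. The exponential energy threshold in \eqref{eq:sm-initial-en} should emerge as the Gr\"onwall factor required to propagate smallness of $\int|A^0|^2\dd\mu$ along the flow.

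\textbf{Preparation.} As $f$ is the $L^2(\dd\mu)$-gradient flow of $\H_{c_0,\lambda}$, one has $\frac{\dd}{\dd t}\H_{c_0,\lambda}(f(t))=-\int_\Sigma V^2\dd\mu\leq 0$, where $V$ denotes the normal speed in \eqref{eq:helfrich-iv-problem}. Combining the Gauss--Bonnet identity $\W(f)=4\pi+\tfrac12\int|A^0|^2\dd\mu$ (valid on $\S^2$) with the elementary expansion
\[\H_{c_0,\lambda}(f)-4\pi=\tfrac12\int|A^0|^2\dd\mu-\tfrac12 c_0\int H\dd\mu+\tfrac14(c_0^2+2\lambda)\A(f),\]
together with the orientation choice \eqref{eq:choice-of-orientation} which yields $\int H\dd\mu\geq 0$ on embeddings sufficiently close to round spheres, every summand is non-negative. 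Consequently $\H_{c_0,\lambda}(f(t))\geq 4\pi$ and $(\H_{c_0,\lambda}(f(t)))^2-(4\pi)^2$ is a non-negative, non-increasing functional of $t$.

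\textbf{The decay ODE.} By Cauchy--Schwarz, $\int V^2\dd\mu\geq\A(f)^{-1}(\int V\dd\mu)^2$, and from $\int\Delta H\dd\mu=0$ the first moment evaluates as
\[\int_\Sigma V\dd\mu=2c_0\W(f)-(\lambda+\tfrac12 c_0^2)\int H\dd\mu+\int H|A^0|^2\dd\mu-c_0\int|A^0|^2\dd\mu.\]
For $c_0<0$, $\lambda\geq 0$, and $\int H\dd\mu\geq 0$ the first two summands are non-positive, while provided $\int|A^0|^2\dd\mu\leq\bar\alpha$ the remaining two are a controlled perturbation. A near-sphere Cauchy--Schwarz lower bound of the form $\int H\dd\mu\geq c\sqrt{\A(f)}$ (sharp on round spheres) then gives $(\int V\dd\mu)^2\geq C\A(f)(2\lambda+c_0^2)^2$ and hence $\int V^2\dd\mu\geq C(2\lambda+c_0^2)^2$. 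Multiplying by $2\H_{c_0,\lambda}(f(t))\geq 8\pi$ and tracking constants produces
\[\tfrac{\dd}{\dd t}(\H_{c_0,\lambda}(f(t)))^2\leq-\tfrac{\pi^2}{4}(2\lambda+c_0^2)^2,\]
and integration against the barrier $\H_{c_0,\lambda}^2\geq(4\pi)^2$ delivers the announced bound on $T$. Monotonicity then forces $\H_{c_0,\lambda}(f(t))\searrow 4\pi$, and since each summand in the decomposition above is non-negative, also $\A(f(t))\to 0$ and $\W(f(t))=4\pi+\tfrac12\int|A^0|^2\dd\mu\to 4\pi$.

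\textbf{Main obstacle and blow-up.} The crux is to propagate $\int|A^0|^2\dd\mu\leq\bar\alpha$ on all of $[0,T)$. I would argue by bootstrap: on the maximal interval where $\int|A^0|^2\dd\mu\leq 2\bar\alpha$ holds, a Kuwert--Sch\"atzle-type life-span lemma (presumably established earlier in the paper) supplies uniform higher-order curvature estimates, and these feed a Gr\"onwall-type bound of the form $\int|A^0|^2\dd\mu(t)\leq\int|A^0|^2\dd\mu(0)\cdot\exp\bigl(C\bigl((\H_{c_0,\lambda}(f_0))^2-(4\pi)^2\bigr)\bigr)$ coming from the total dissipation $\int_0^t\int V^2\dd\mu\dd s\leq\H_{c_0,\lambda}(f_0)-4\pi$; the exponential in \eqref{eq:sm-initial-en} is calibrated so that this Gr\"onwall estimate is exactly sharp at $\bar\alpha$. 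Finally, for the subsequential convergence, pick $t_j\nearrow T$, set $r_j=\sqrt{\A(f(t_j))/(4\pi)}\searrow 0$, and let $x_j$ be the center of mass of $f(t_j)$; the rescaled embeddings $(f(t_j)-x_j)/r_j$ have unit area and Willmore energy tending to $4\pi$, so the higher-order curvature estimates from the life-span lemma combined with De Lellis--M\"uller-type regularity extract a subsequence converging smoothly, after reparametrization, to the unit round sphere.
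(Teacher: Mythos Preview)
Your outline has the right target differential inequality $\frac{\dd}{\dd t}\H_{c_0,\lambda}^2\leq -\tfrac{\pi^2}{4}(2\lambda+c_0^2)^2$, but the derivation and the bootstrap both rest on claims you have not justified. The central gap is the assertion that the orientation convention \eqref{eq:choice-of-orientation} ``yields $\int H\dd\mu\geq 0$ on embeddings sufficiently close to round spheres'': the orientation only fixes a sign convention and gives no such inequality. In the paper this is the content of a separate proposition proved by a compactness argument in the class of weak conformal $W^{2,2}$-immersions (via the Mondino--Rivi\`ere/Mondino--Scharrer theory), showing that if $\int|A^0|^2\dd\mu$ is below a universal threshold then necessarily $\int H\dd\mu>0$. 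Your stronger quantitative claim $\int H\dd\mu\geq c\sqrt{\A}$, on which your Cauchy--Schwarz route to the decay ODE hinges, would need an analogous compactness argument; the paper instead avoids this by a scaling trick, computing $\frac{\dd}{\dd r}\H_{c_0,\lambda}(r(f-x_t))\big|_{r=1}$ in two ways (once directly, once against $\partial_t f$ via Topping's diameter estimate), which only requires the qualitative sign $\int H\dd\mu\geq 0$.

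Your Gr\"onwall step also misidentifies the mechanism. The exponential in \eqref{eq:sm-initial-en} does not come from total dissipation; rather one proves directly, via Kuwert--Sch\"atzle interpolation under $\int|A^0|^2\dd\mu\leq\bar\alpha$, a linear differential inequality $\frac{\dd}{\dd t}\int|A^0|^2\dd\mu\leq C(\lambda^2+c_0^4)\int|A^0|^2\dd\mu$, so that $\int|A^0|^2\dd\mu(t)\leq \int|A^0|^2\dd\mu(0)\,\exp\!\big(C(\lambda^2+c_0^4)t\big)$. The exponential in the hypothesis is then calibrated against the \emph{time} bound: since $(\lambda^2+c_0^4)\leq(2\lambda+c_0^2)^2$, the product $(\lambda^2+c_0^4)T$ is controlled by $(\H_{c_0,\lambda}(f_0))^2-(4\pi)^2$, closing the bootstrap. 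Finally, for the blow-up, the paper does not use a De~Lellis--M\"uller argument on area-normalised surfaces; it runs the Kuwert--Sch\"atzle concentration-scale blow-up, obtains a proper Willmore limit (since $r_j\to 0$ by the finite-time bound), and excludes non-compact limits via the removable-singularity/Bryant classification under the uniform bound $\W(f(t))\leq 8\pi-\beta$ guaranteed by the propagated smallness of $\int|A^0|^2\dd\mu$.
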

Note that \Cref{prop:neg-result} generalizes the behavior of round spheres to those surfaces which are energetically close to round spheres. Indeed, round spheres satisfy $|A^0|^2\equiv 0$, so in particular, \eqref{eq:sm-initial-en} is always satisfied if $f_0$ parametrizes a round sphere. 

While this behavior may be expected, we also find that a positive spontaneous curvature $c_0$ yields the following ``positive result'', i.e. has regularizing effects for the flow. This can be proved even without an energy assumption like \eqref{eq:sm-initial-en} requiring the initial surface to be (energetically) close to a round sphere.

\begin{theorem}\label{thm:main}
  Let $c_0>0$, $\lambda\geq 0$ and $f_0\colon\S^2\to\R^3$ be an immersion with \eqref{eq:choice-of-orientation} and
  \begin{equation}\label{eq:en-thresh}
    \H_{c_0,\lambda}(f_0)=\H_{c_0}(f_0)+\frac12\lambda\A(f_0)\leq \frac{2\lambda}{c_0^2+2\lambda}8\pi. 
  \end{equation}
  If $f\colon[0,T)\times\S^2\to\R^3$ is a maximal $(c_0,\lambda)$-Helfrich flow starting in $f_0$, then $T=\infty$. Moreover, the flow converges up to reparametrization to a Helfrich immersion for $t\to\infty$.
\end{theorem}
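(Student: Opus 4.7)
The strategy follows the blueprint of the Willmore flow convergence theory of \cite{kuwertschaetzle2001,kuwertschaetzle2004}, adapted to absorb the spontaneous curvature and area contributions through a Young-type cancellation, and closed with a \L ojasiewicz--Simon argument. The four steps are as follows.

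\textbf{Step 1: Turning the threshold into a Willmore bound.} For any $\epsilon\in(0,1)$, Young's inequality gives $(H-c_0)^2 \geq (1-\epsilon)H^2 - (\epsilon^{-1}-1)c_0^2$. Integrating against $\dd\mu$, adding $\frac12\lambda\A(f)$ and choosing $\epsilon = c_0^2/(c_0^2+2\lambda)$ to kill the area contribution yields
\begin{equation*}
  \H_{c_0,\lambda}(f) \geq \frac{2\lambda}{c_0^2+2\lambda}\W(f).
\end{equation*}
Since the $(c_0,\lambda)$-Helfrich flow is the $L^2(\dd\mu)$-gradient flow of $\H_{c_0,\lambda}$, the energy is non-increasing, so combined with \eqref{eq:en-thresh} I obtain $\W(f(t))\leq 8\pi$ throughout. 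An inspection of equality cases in Young's inequality (which would force $H$ constant, hence $f$ a round sphere with $\W=4\pi$) shows this bound is strict, so $\W(f(t)) < 8\pi$ for all $t\in[0,T)$.

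\textbf{Step 2: Global existence.} I would prove a life-span theorem in the spirit of \cite{kuwertschaetzle2001,kuwertschaetzle2002} for the $(c_0,\lambda)$-flow, asserting that the flow extends past time $T$ unless $|A^0|^2$ concentrates. The uniform bound $\W(f(t))<8\pi$ together with $\Sigma=\S^2$ prohibits such concentration via a Li--Yau / Simon-monotonicity argument; the Michael--Simon--Sobolev inequality \cite{michaelsimon1973} plus parabolic interpolation then gives uniform bounds on $\|A\|_{L^\infty}$ and all higher covariant derivatives. The additional $c_0$- and $\lambda$-dependent terms in \eqref{eq:helfrich-iv-problem} are of strictly lower differential order than $\Delta H$, hence are absorbed by the leading estimates (with $\lambda>0$ giving the area bound $\A(f(t))\leq \frac{2}{\lambda}\H_{c_0,\lambda}(f_0)$ needed to make these interpolations effective). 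This forces $T=\infty$.

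\textbf{Step 3: Subsequential limits.} From the gradient-flow identity
\begin{equation*}
  \int_0^\infty \|\partial_t f\|_{L^2(\dd\mu_{f(t)})}^2 \dd t = \H_{c_0,\lambda}(f_0) - \lim_{t\to\infty}\H_{c_0,\lambda}(f(t)) < \infty,
\end{equation*}
together with the uniform geometric bounds from Step~2, I can extract (after reparametrization by diffeomorphisms of $\S^2$ and possibly translation in $\R^3$) a sequence $t_j\to\infty$ such that $f(t_j)\to f_\infty$ smoothly for some immersion $f_\infty\colon\S^2\to\R^3$, with $\partial_t f(t_j)\to 0$. Passing the flow equation \eqref{eq:helfrich-iv-problem} to the limit, $f_\infty$ is a $(c_0,\lambda)$-Helfrich immersion. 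A lower bound on the area---needed to prevent collapse---comes from the inequality $\W\geq 4\pi$ and the Willmore bound of Step~1, combined with $\H_{c_0,\lambda}(f(t))\geq \frac{8\pi\lambda}{c_0^2+2\lambda}$.

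\textbf{Step 4: Full convergence via \L ojasiewicz--Simon.} To upgrade subconvergence to full convergence (modulo reparametrization), I establish a \L ojasiewicz--Simon inequality of the form
\begin{equation*}
  |\H_{c_0,\lambda}(f) - \H_{c_0,\lambda}(f_\infty)|^{1-\theta} \leq C \|\nabla \H_{c_0,\lambda}(f)\|_{L^2(\dd\mu_f)}
\end{equation*}
for $f$ close to $f_\infty$ in a suitable $C^{k,\alpha}$-neighborhood (after factoring out tangential reparametrizations), with some $\theta\in(0,\tfrac12]$. The analytic Fredholm framework of Chill applies once the normal graph ansatz is set up; the analyticity of $\H_{c_0,\lambda}$ is straightforward. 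Combining this inequality with the $L^2$-gradient flow identity yields finite length of the trajectory modulo diffeomorphisms and hence convergence as $t\to\infty$.

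\textbf{Main obstacle.} I expect Step~2 to be the most delicate point: the Kuwert--Sch\"atzle concentration-of-curvature obstruction argument is tuned to the scale-invariant Willmore energy, and carrying it over while tracking that the non-scale-invariant lower-order terms (involving $c_0^2$ and $\lambda$) do not interfere with the quantitative $\varepsilon$-regularity estimates requires careful bookkeeping. Step~4 is standard in spirit but requires handling the diffeomorphism-invariance of $\H_{c_0,\lambda}$, typically via a normal-graph parametrization that quotients out tangential degrees of freedom.
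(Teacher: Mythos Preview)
Step~1 matches the paper's \eqref{eq:est-Will-energy} and Step~4 is in line with \Cref{sec:loja}, but there is a genuine gap in Steps~2--3, and it is not the bookkeeping issue you flag. The assertion that ``$\W(f(t))<8\pi$ together with $\Sigma=\S^2$ prohibits concentration via a Li--Yau / Simon-monotonicity argument'' is false: a round sphere shrinking to a point has $\W\equiv 4\pi$ throughout while $|A|^2$ concentrates on every fixed ball. Li--Yau only gives embeddedness, and Simon's monotonicity controls area ratios, not curvature concentration. Your area lower bound in Step~3 does not help either: the chain $\H_{c_0,\lambda}\geq \frac{2\lambda}{c_0^2+2\lambda}\W\geq \frac{8\pi\lambda}{c_0^2+2\lambda}$ bounds the \emph{energy} from below, not the area; a shrinking sphere has $\H_{c_0,\lambda}\to 4\pi$ and $\A\to 0$ simultaneously. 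This is not hypothetical---it is precisely what the paper proves \emph{does} happen for $c_0<0$ under the same Willmore bound (\Cref{prop:neg-result}), so your outline as written does not distinguish the two signs of $c_0$.

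The paper's resolution is \Cref{subsec:rneq0}. After running the blow-up of \Cref{prop:conc-limit}, if the rescaling radii satisfy $r_j\to 0$ one shows that the flow subconverges after rescaling to a round sphere along \emph{every} sequence $t\nearrow T$ (\Cref{prop:prop1-rj0}), so in particular $\int_{\S^2}|A^0|^2H\dd\mu\geq 0$ near $T$. Then computing $\frac{\dd}{\dd t}\V(f(t))$ and using Gauss--Bonnet produces a term $2c_0\int_{\S^2} K\dd\mu=8\pi c_0>0$ that keeps the signed volume bounded away from zero (\Cref{cor:cor2-rj0}), contradicting $\A(f(t))\to 0$ via the isoperimetric inequality. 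This volume argument is where $c_0>0$ enters essentially, and it is the missing idea in your sketch.
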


So the presence of a positive spontaneous curvature $c_0>0$ provides a certain regularizing effect in the sense that it prohibits short-time singularities.

\begin{remark}[Non-triviality of \eqref{eq:en-thresh}]\label{rem:en-thresh}
  If $c_0>0$ and $\lambda\geq 0$, one finds that, if $f_r\colon\S^2\to\R^3$ parametrizes a round sphere of radius $r=\frac{2c_0}{c_0^2+2\lambda}$, then 
  \begin{align}
    \beta_{c_0,\lambda}&\vcentcolon=\inf\{\H_{c_0}(f)+\frac12\lambda\A(f)\mid f\colon\S^2\to\R^3\text{ is an immersion}\}\\
    &\leq \H_{c_0}(f_r)+\frac12\lambda\A(f_r) = \frac{2\lambda}{c_0^2+2\lambda} 4\pi.
  \end{align}
\end{remark}

Particularly, one finds a commonality between the result in \Cref{thm:main} and the convergence result for the Willmore flow of spheres in \cite{kuwertschaetzle2004}. Namely, Kuwert-Schätzle show global existence and convergence of the Willmore flow of spheres if $\W(f_0)\leq 8\pi$. Note that the energy threshold $8\pi$ is exactly twice the absolute minimum of the Willmore energy. Similarly, the energy threshold in \eqref{eq:en-thresh} is twice the energy of the round sphere minimizing $\H_{c_0,\lambda}$ among round spheres, cf. \Cref{rem:en-thresh}.

\begin{remark}
  One can fully classify the behavior of the Helfrich flow starting in round spheres. Let $f\colon\S^2\to\R^3$ be any parametrization of the round sphere with radius $1$ centered in $0$. Furthermore, fix $x_0\in\R^3$. Then, for $r>0$, consider the immersions $f_r\colon\S^2\to\R^3$ with $f_r(x)=x_0+rf(x)$. In the orientation of $\S^2$, one has $\nu_{f_r}(x)=-f(x)$ and $H_{f_r}\equiv\frac{2}{r}$, $|A^0_{f_r}|\equiv 0$. Particularly, if $r\colon[0,T)\to(0,\infty)$ is smooth, writing $f(t,x)=f_{r(t)}(x)$, one finds that $f$ is a $(c_0,\lambda)$-Helfrich flow if and only if
  \begin{equation}\label{eq:spheres-ode}
    \frac{\dd}{\dd t} r(t) = \frac{c_0}{r(t)} \Bigl(\frac{2}{r(t)}-c_0\Bigr) - \frac{2}{r(t)}\lambda\quad\text{on $[0,T)$}.
  \end{equation}
  An analysis of this autonomous, one-dimensional ordinary differential equation gives the following. Let $c_0\in\R\setminus\{0\}$, $\lambda\geq 0$, $r_0>0$ and $f_0\colon\S^2\to\R^3$ parametrize a round sphere of radius $r_0$ centered at $x_0\in\R^3$. Denote by $f\colon[0,T)\times\S^2\to\R^3$, $f(t,x)=x_0+r(t)\frac{f_0(x)-x_0}{r_0}$ the maximal $(c_0,\lambda)$-Helfrich flow starting in $f_0$ where $r$ solves \eqref{eq:spheres-ode}. If $c_0<0$, then $T<\infty$ and $r(t)\searrow 0$ for $t\nearrow T$. If however $c_0>0$, then $T=\infty$ and
  \begin{equation}
    r(t)\to r^*=\frac{2c_0}{c_0^2+2\lambda}\in(0,\infty)\quad\text{for $t\to\infty$}.
  \end{equation}
\end{remark}

\subsection{Strategy of proof and outline}

When expanding the square in $\int_{\S^2} (H-c_0)^2\dd\mu$, it already becomes apparent that the sign of $\int_{\S^2} H\dd\mu$ plays a role when investigating the influence of a non-zero spontaneous curvature $c_0$. Since any a-priori information on the sign of the average mean curvature is in general hard to obtain, in \Cref{prop:neg-result}, we work in the setting of surfaces whose Willmore energy is sufficiently close to that of the round sphere which has positive average mean curvature. Indeed, in \Cref{prop:pos-av-mc-for-sm-en}, using the compactness-theory for weak Sobolev immersions by \cite{riviere2014,kuwertli2012,mondinoriviere2014,mondinoscharrer2020}, we find that being energetically close to a round sphere suffices to control the sign of $\int_{\S^2}H\dd\mu$. Then, we employ the fact that the Helfrich flow preserves smallness of the Willmore energy in finite time. Finally, with this a-priori knowledge, one can apply a similar scaling-argument as in \cite{blatt2019} also for the energy $\H_{c_0,\lambda}$ with $c_0<0$ to deduce the existence of finite-time singularities.

For the proof the \Cref{thm:main}, in the spirit of \cite{kuwertschaetzle2002}, we first compute the local decay of the $L^2$-norm of the second fundamental form in \Cref{sec:loc-en}. Then, in \Cref{sec:blow-up}, we show the existence of a blow-up and suitable concentration limit along the Helfrich flow, especially proving a life-span theorem along the way. Note that, since the Helfrich energy (and thus the Helfrich flow) are not scaling-invariant, it is especially important to keep track of the dependence of all estimates on the parameters $c_0$ and $\lambda$. Finally, in \Cref{sec:loja}, a \L ojasiewicz-Simon gradient inequality is used to show asymptotic stability of the Helfrich flow if the concentration limit in the blow-up procedure is compact. Here, it is important to understand why finite-time curvature concentration by ``scaling'' (as one observes in the case $c_0<0$ in \Cref{prop:neg-result}) cannot occur under the assumption $c_0>0$ and the energy threshold \eqref{eq:en-thresh}. This is a delicate argument carried out in \Cref{subsec:rneq0}. 

\begin{remark}
  Excluding such curvature concentration by ``scaling'' is significantly easier if one additionally assumes a $4\pi$-energy constraint in \eqref{eq:en-thresh}, i.e. additionally requiring $\H_{c_0,\lambda}(f_0)\leq 4\pi$: If $\A(f(t))\to 0$ for $t\nearrow T$, then $\H_{c_0,\lambda}(f(t))\leq 4\pi-\beta$ contradicts $|\W(f(t))-\H_{c_0,\lambda}(f(t))|\to 0$ and $\W(f(t))\geq 4\pi$. Here we use that, for a sequence of immersions $f_j\colon\Sigma\to\R^3$ with $\W(f_j)\leq C$ and $\A(f_j)\to 0$, 
  \begin{equation}
    |\W(f_j)-\H_{c_0,\lambda}(f_j)|\leq \frac12|c_0|\int_{\Sigma}|H|\dd\mu + (\frac14c_0^2+\lambda)\A(f_j) \leq \frac12|c_0|\sqrt{4C}\sqrt{\A(f_j)} + o(1) \to 0
  \end{equation}
  for $j\to\infty$.
\end{remark}

Finally, with all these preliminary results, in \Cref{sec:proof-main}, we complete the proofs of \Cref{prop:neg-result,thm:main}.

\section{Geometric preliminaries}

\subsection{Curvature identities and estimates}

For more details on the geometric preliminaries, cf. \cite[Section~1.1]{kuwertschaetzle2012} and \cite[Section~2]{mccoywheeler2013}. Consider an immersion $f\colon\Sigma\to\R^3$ of a closed, oriented surface $\Sigma$. Let $\varphi\colon\Sigma\to\R$ be smooth. For a Borel-set $K\subseteq\R^3$, we often write $\int_{K} \varphi\dd\mu \vcentcolon= \int_{f^{-1}(K)} \varphi\dd\mu$. One computes
\begin{equation}\label{eq:a-a0-h}
  |A|^2=|A^0|^2+\frac12 H^2.
\end{equation}
Note that, as a consequence of the Gauss-Bonnet Theorem, using that the \emph{Gauss curvature} $K$ can be written as
\begin{equation}\label{eq:Gauss-curv}
  K=\frac14H^2 - \frac12|A^0|^2,
\end{equation}
one has for $\W_0(f)=\int_{\Sigma}|A^0|^2\dd \mu$
\begin{equation}\label{eq:int-a0sq}
  \W_0(f)=2\W(f)-8\pi(1-g)
\end{equation}
where $g\in\N_0$ denotes the genus of $\Sigma$. Particularly, combining \eqref{eq:a-a0-h} and \eqref{eq:int-a0sq},
\begin{equation}\label{eq:int-asq}
  \int_{\Sigma} |A|^2\dd\mu = \W_0(f) + 2\W(f) = 4(\W(f)-2\pi)+8\pi g.
\end{equation}
In the coordinates of a local orthonormal frame $e_1,e_2$, by the definition of $H$, one has $H=A_{ii}$ and thus $|\nabla^m H|\lesssim
|\nabla^m A|$. Using $A^0_{ij}=A_{ij}-\frac12\delta_{ij}H$, one also finds $|\nabla^m A^0|\lesssim |\nabla^m A|$. As explained in \cite[Equations~(6) and (7)]{mccoywheeler2013}, as a consequence of the Codazzi equations, also the reverse holds. To summarize,
\begin{equation}\label{eq:cod-main}
  |\nabla^m H|\lesssim|\nabla^m A|\lesssim |\nabla^mA^0| \lesssim |\nabla^mA|.
\end{equation}   
\begin{remark}\label{rem:neg-c0-4pi}
  If $c_0<0$ and $f\colon\Sigma\to\R^3$ is an Alexandrov immersion, then one finds $\H_{c_0}(f)>4\pi$, cf. \cite[Lemma~6.1 and Theorem~1.5]{ruppscharrer2023}. Particularly, this holds for embeddings, so by the Li-Yau inequality in \cite{liyau1982}, for immersions $f$ with $\W(f)<8\pi$.
\end{remark}

\begin{lemma}[Controlling the Willmore energy]
  Let $f\colon\Sigma\to\R^3$ be an immersion and $c_0\in\R$ as well as $\lambda>0$. Then
  \begin{equation}\label{eq:est-Will-energy}
    \W(f) \leq \frac{2\lambda+c_0^2}{2\lambda} \Bigl( \H_{c_0}(f) + \frac12\lambda \A(f) \Bigr) = \frac{2\lambda+c_0^2}{2\lambda} \cdot \H_{c_0,\lambda}(f).
  \end{equation}
\end{lemma}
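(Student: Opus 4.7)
The plan is to reduce the statement to an elementary pointwise inequality for the mean curvature by expanding $(H-c_0)^2$ and applying Young's inequality with a carefully chosen weight depending on $\lambda$ and $c_0$.

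First, I would write $H = (H-c_0) + c_0$ and apply the weighted Young inequality: for every $\delta>0$,
\begin{equation}
  H^2 \leq (1+\delta)(H-c_0)^2 + \Bigl(1+\tfrac{1}{\delta}\Bigr)c_0^2.
\end{equation}
Integrating against $\mu$ and multiplying by $\tfrac14$ yields
\begin{equation}
  \W(f) \leq (1+\delta)\,\H_{c_0}(f) + \tfrac14\Bigl(1+\tfrac{1}{\delta}\Bigr)c_0^2\,\A(f).
\end{equation}

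Next, I would match coefficients with the desired inequality. Choosing $\delta = \tfrac{c_0^2}{2\lambda}$ gives $1+\delta = \tfrac{2\lambda+c_0^2}{2\lambda}$, while $1+\tfrac{1}{\delta} = \tfrac{c_0^2+2\lambda}{c_0^2}$, so that
\begin{equation}
  \tfrac14\Bigl(1+\tfrac{1}{\delta}\Bigr)c_0^2 \,\A(f) \;=\; \tfrac14 (c_0^2+2\lambda)\A(f) \;=\; \tfrac{2\lambda+c_0^2}{2\lambda}\cdot\tfrac12\lambda\A(f),
\end{equation}
which factors out the common prefactor $\tfrac{2\lambda+c_0^2}{2\lambda}$ and produces precisely $\H_{c_0,\lambda}(f)$ inside the parentheses.

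There is no real obstacle here; the only subtlety is that the argument genuinely needs $\lambda>0$ (both to justify $\delta>0$ and so that the right-hand side is finite), which matches the hypothesis. The statement for $\lambda=0$ would correspond to the trivial degenerate case. The whole proof is therefore a one-parameter Young's inequality with the sharp choice $\delta=c_0^2/(2\lambda)$, followed by integration.
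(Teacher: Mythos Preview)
Your proof is correct. The paper takes a slightly different route: it expands $\W(f)=\H_{c_0}(f)+\tfrac12 c_0\int_{\Sigma}H\dd\mu-\tfrac14 c_0^2\A(f)$, bounds the cross term by Cauchy--Schwarz $\tfrac12 c_0\int_{\Sigma}H\dd\mu\leq\sqrt{\W(f)}\sqrt{c_0^2\A(f)}$, and then applies Peter--Paul with $\varepsilon=\tfrac{\lambda}{2c_0^2}+\tfrac14$ to absorb the $\W(f)$ term on the left. Your argument is more direct: you apply Young's inequality pointwise to $H^2=((H-c_0)+c_0)^2$ before integrating, which bypasses the Cauchy--Schwarz step entirely and arrives at the same sharp constant. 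One minor remark: your choice $\delta=c_0^2/(2\lambda)$ requires $c_0\neq 0$; the paper handles this by an explicit ``W.l.o.g.\ $c_0\neq 0$'' at the start, and you should do the same (the case $c_0=0$ being trivial since then $\W=\H_{0}\leq\H_{0,\lambda}$).
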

\begin{proof}
  W.l.o.g. $c_0\neq 0$. Using $\W(f)=\H_{c_0}(f) + \frac12c_0\int_{\Sigma}H\dd\mu - \frac14c_0^2\A(f)$ and the Cauchy-Schwarz estimate $\frac12c_0\int_{\Sigma}H\dd\mu\leq \sqrt{\W(f)}\sqrt{c_0^2\A(f)}$, one finds
  \begin{align}
    \W(f)\leq \H_{c_0}(f) + \Bigl(\varepsilon-\frac14\Bigr)c_0^2\A(f)+ \frac{1}{4\varepsilon}\W(f)
  \end{align}
  for any $\varepsilon>0$, using the Peter-Paul inequality. Choosing $\varepsilon=\frac12\lambda/c_0^2 + \frac14$, one obtains
  \begin{equation}
    \Bigl(1-\frac{c_0^2}{2\lambda+c_0^2}\Bigr)\W(f)\leq \H_{c_0}(f)+\frac12\lambda\A(f),
  \end{equation}
  that is, \eqref{eq:est-Will-energy}.
\end{proof}

\subsection{Evolution equations}
First, recall the following evolutions of the relevant geometric quantities.
\begin{lemma}[{\cite[Lemma~2.3]{rupp2024}}] For a family of immersions $f\colon[0,T)\times \Sigma \to\R^3$ with normal speed $\partial_t f=\vcentcolon \xi\nu$, one has in the coordinates of a local orthonormal frame ${e_1,e_2}$ 
  \begin{align}
    \partial_t \dd\mu &= -H\xi\dd\mu\label{eq:ev-mu}\\
    \partial_t H &= \Delta\xi + |A|^2\xi=\Delta\xi + |A^0|^2\xi + \frac{1}{2}H^2\xi\label{eq:ev-H}\\
    \partial_t (H\dd\mu) &= \Delta\xi\dd\mu +  (|A^0|^2 - \frac{1}{2}H^2)\xi\dd\mu  \label{eq:ev-Hdmu}\\
    (\partial_t A)(e_i,e_j)&=\nabla_{i,j}^2\xi - A_{ik}A_{kj}\xi.
  \end{align}
\end{lemma}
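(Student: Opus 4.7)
The plan is to derive the four identities in the standard order: first the induced metric, then the volume form, then the unit normal, then the second fundamental form, and finally combine everything to get $\partial_t H$ and $\partial_t(H\dd\mu)$. All computations can be carried out in local coordinates, and the identities involving $A$, $H$, $\Delta\xi$ can be expressed cleanly by evaluating at a point in a local orthonormal frame where the Christoffel symbols vanish.

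First I would compute
\begin{equation}
  \partial_t g_{ij} = \langle \partial_i(\xi\nu),\partial_j f\rangle + \langle\partial_i f,\partial_j(\xi\nu)\rangle = -2\xi A_{ij},
\end{equation}
using that $\nu\perp\partial_j f$ and the Weingarten relation $\langle\partial_i\nu,\partial_j f\rangle=-A_{ij}$. From Jacobi's formula, $\partial_t\sqrt{\det g}=\frac{1}{2}\sqrt{\det g}\,g^{ij}\partial_tg_{ij}$, which immediately yields $\partial_t\dd\mu=-H\xi\dd\mu$. For the normal vector, since $|\nu|^2\equiv 1$ and $\langle\nu,\partial_i f\rangle=0$, differentiating the second relation in $t$ and using $\partial_i\partial_t f=\partial_i(\xi\nu)$ gives $\langle\partial_t\nu,\partial_i f\rangle=-\partial_i\xi$, hence $\partial_t\nu=-g^{ij}(\partial_i\xi)\partial_j f$ (tangential with $\langle\partial_t\nu,\nu\rangle=0$).

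The main technical step is computing $\partial_t A_{ij}$. Starting from $A_{ij}=\langle\partial^2_{ij}f,\nu\rangle$, I commute time and space derivatives, plug in the previous identities for $\partial_t\nu$ and $\partial_i\partial_t f=(\partial_i\xi)\nu+\xi\,\partial_i\nu$, and simplify the cross-terms via Weingarten and the Gauss formula $\partial^2_{ij}f=\Gamma^k_{ij}\partial_k f+A_{ij}\nu$. Working at a point with geodesic normal coordinates (so $\Gamma^k_{ij}=0$), the non-tangential contributions collect to
\begin{equation}
  (\partial_t A)(e_i,e_j)=\nabla^2_{ij}\xi - \xi A_{ik}A_{kj},
\end{equation}
which is the fourth identity. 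This intermediate bookkeeping, tracking the precise combination of $\nu$- and $\partial_k f$-components, will be the most delicate part.

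Given $\partial_t A_{ij}$, the evolution of $H=g^{ij}A_{ij}$ follows from the product rule together with $\partial_t g^{ij}=-g^{ik}g^{jl}\partial_t g_{kl}=2\xi A^{ij}$. Taking the trace in an orthonormal frame,
\begin{equation}
  \partial_t H = 2\xi|A|^2 + \Delta\xi - \xi|A|^2 = \Delta\xi + |A|^2\xi,
\end{equation}
and the second form is just \eqref{eq:a-a0-h}. Finally,
\begin{equation}
  \partial_t(H\dd\mu) = (\Delta\xi + |A|^2\xi)\dd\mu - H^2\xi\dd\mu = \Delta\xi\dd\mu + (|A^0|^2-\tfrac{1}{2}H^2)\xi\dd\mu,
\end{equation}
again invoking \eqref{eq:a-a0-h} in the last step. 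I do not expect serious obstacles beyond managing signs and indices in the computation of $\partial_t A_{ij}$; once that is in place, the remaining identities are mechanical consequences.
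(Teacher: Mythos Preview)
Your proposal is correct and follows the standard derivation of these evolution equations; the paper itself does not supply a proof but merely quotes the lemma from \cite[Lemma~2.3]{rupp2024}, so there is no in-paper argument to compare against. Your outline (metric $\to$ volume form $\to$ normal $\to$ $A$ $\to$ $H$ $\to$ $H\dd\mu$, working at a point in normal coordinates) is exactly how such identities are established in the references the paper cites.
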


As in \cite[Proposition~2.4]{rupp2024}, these evolutions immediately yield

\begin{proposition}\label{prop:ev-eq}
  Let $f\colon\Sigma\to\R^3$ be an immersion and $\varphi\colon\Sigma\to\R^3$. One has the following first-variation identities.
  \begin{align}
    \W_0'(f)(\varphi)&=\int_{\Sigma} \langle [\Delta H+|A^0|^2H]\nu,\varphi\rangle\dd\mu =\vcentcolon \langle \nabla\W_0(f),\varphi\rangle_{L^2(\dd \mu)},\\
    \A'(f)(\varphi)&=-\int_{\Sigma} \langle H\nu,\varphi\rangle\dd\mu =\vcentcolon \langle \nabla\A(f),\varphi\rangle_{L^2(\dd \mu)},\\
    \V'(f)(\varphi)&=-\int_{\Sigma} \langle \nu,\varphi\rangle\dd\mu =\vcentcolon \langle \nabla\V(f),\varphi\rangle_{L^2(\dd \mu)}
  \end{align}
  and finally $\H_{c_0}'(f)(\varphi) = \langle \nabla\H_{c_0}(f),\varphi\rangle_{L^2(\dd \mu)}$ with
  \begin{align}
    2\nabla \H_{c_0}(f)&=\nabla\W_0(f)-c_0|A^0|^2\nu + \frac12c_0H(H-c_0)\nu\\
    &= [\Delta H + |A^0|^2(H-c_0) + \frac12 c_0 H(H-c_0)]\nu. 
  \end{align}
\end{proposition}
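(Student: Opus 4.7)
The plan is to read each identity off the evolution equations of the preceding lemma. For a one-parameter family $f_\varepsilon$ with $\partial_\varepsilon f_\varepsilon|_{\varepsilon=0}=\varphi$, I would first decompose $\varphi=\xi\nu+\varphi^\top$ with $\xi=\langle\varphi,\nu\rangle$. Since $\A$, $\V$, $\W_0$ and $\H_{c_0}$ are all invariant under reparametrization of $\Sigma$, the tangential component $\varphi^\top$ contributes only divergence terms that integrate to zero on the closed surface $\Sigma$. Hence it suffices to carry out each computation for normal variations $\partial_\varepsilon f=\xi\nu$ and then observe that the resulting bracketed integrand is always paired with $\xi=\langle\nu,\varphi\rangle$, which is precisely the claimed $L^2(\dd\mu)$-gradient representation.

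For $\A'$, I would integrate \eqref{eq:ev-mu} to get $\partial_\varepsilon\A=-\int_{\Sigma}H\xi\dd\mu$. For $\int_{\Sigma}H\dd\mu$, integrating \eqref{eq:ev-Hdmu} and discarding $\int_{\Sigma}\Delta\xi\dd\mu=0$ yields $\partial_\varepsilon\int_{\Sigma}H\dd\mu=\int_{\Sigma}(|A^0|^2-\tfrac12 H^2)\xi\dd\mu$. For the Willmore piece, combining \eqref{eq:ev-mu} and \eqref{eq:ev-H} gives
\begin{align*}
\partial_\varepsilon\int_{\Sigma}H^2\dd\mu &=\int_{\Sigma}2H\bigl(\Delta\xi+|A^0|^2\xi+\tfrac12 H^2\xi\bigr)\dd\mu+\int_{\Sigma}H^2(-H\xi)\dd\mu\\
&=2\int_{\Sigma}\xi\bigl(\Delta H+|A^0|^2H\bigr)\dd\mu,
\end{align*}
where the cubic terms cancel and one integration by parts moves the Laplacian onto $H$. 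Together with Gauss--Bonnet ($\W_0=2\W-8\pi(1-g)$), this produces the formula for $\nabla\W_0$.

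For $\V'$, I would differentiate $-\tfrac13\int_{\Sigma}\langle f,\nu\rangle\dd\mu$, using $\partial_\varepsilon f=\xi\nu$, \eqref{eq:ev-mu}, and the identity $\partial_\varepsilon\nu=-\nabla^\Sigma\xi$ (where $\nabla^\Sigma\xi$ denotes the surface gradient viewed as a tangent $\R^3$-vector; this follows from $|\nu|=1$ together with $\nu\perp\partial_i f$). Integration by parts of $\int_{\Sigma}\langle f,\nabla^\Sigma\xi\rangle\dd\mu=-\int_{\Sigma}\xi\,\mathrm{div}^\Sigma(f^\top)\dd\mu$ together with the identity $\mathrm{div}^\Sigma(f^\top)=2+H\langle f,\nu\rangle$ (obtained from $\mathrm{div}_\Sigma f=2$ and the decomposition $\mathrm{div}_\Sigma X=\mathrm{div}^\Sigma(X^\top)-H\langle X,\nu\rangle$) cancels all mean-curvature contributions, leaving $\V'=-\int_{\Sigma}\xi\dd\mu$.

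Finally, I would use $\H_{c_0}(f)=\W(f)-\tfrac{c_0}{2}\int_{\Sigma}H\dd\mu+\tfrac{c_0^2}{4}\A(f)$ and linearity: combining the three gradients yields $2\nabla\H_{c_0}=\nabla\W_0-c_0(|A^0|^2-\tfrac12 H^2)\nu-\tfrac12 c_0^2 H\nu$, and rewriting $\tfrac{c_0}{2}H^2-\tfrac{c_0^2}{2}H=\tfrac{c_0}{2}H(H-c_0)$ gives the first displayed form. The second form then follows from the explicit $\nabla\W_0=(\Delta H+|A^0|^2H)\nu$. I expect the volume identity to be the most delicate step, because it requires careful bookkeeping of $\partial_\varepsilon\nu$ and the surface-divergence identity for the position vector; the other pieces reduce to direct substitution into the given evolution equations and standard closed-surface integration by parts.
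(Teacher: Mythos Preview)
Your proposal is correct and follows exactly the approach the paper indicates: the paper does not give a detailed proof but simply states that the identities follow ``as in \cite[Proposition~2.4]{rupp2024}'' from the evolution equations of the preceding lemma, which is precisely what you carry out. Your careful treatment of the volume variation via $\partial_\varepsilon\nu=-\nabla^\Sigma\xi$ and the surface-divergence identity $\mathrm{div}^\Sigma(f^\top)=2+H\langle f,\nu\rangle$ is correct and fills in the details the paper omits by citation.
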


\subsection{Fundamental properties of the $(c_0,\lambda)$-Helfrich flow}

Given $c_0\in\R$ and $\lambda\in\R$, recall that a family of immersions $f\colon[0,T)\times \Sigma\to\R^3$ satisfying
\begin{align}\label{eq:flow-eq}
  \partial_t f &= -2\nabla\H_{c_0}(f) - \lambda\nabla\A(f) = \xi\nu  \quad\text{with}\\
  \xi &=-\big( \Delta H + |A^0|^2H - c_0\bigl( |A^0|^2-\frac12 H^2\bigr) - (\lambda+\frac12c_0^2) H \big)
\end{align}
is referred to as a $(c_0,\lambda)$-Helfrich flow with initial datum $f_0=f(0)$.
\begin{remark}[Energy decay]\label{rem:en-dec}
  If $f\colon[0,T)\times\Sigma\to\R^3$ is a $(c_0,\lambda)$-Helfrich flow, then
  \begin{equation}
    \frac{\dd}{\dd t}\bigl(2\H_{c_0}(f(t))+\lambda\A(f(t))\bigr) =  \int_{\Sigma} \langle 2\nabla \H_{c_0}(f)+\lambda\nabla\A(f),\partial_tf\rangle\dd \mu  =-\int_{\Sigma} |\partial_tf|^2\dd \mu \leq 0.
  \end{equation}
  Particularly, if $\H_{c_0,\lambda}(f_0)=\H_{c_0}(f_0)+\frac12\lambda\A(f_0)\leq K$, then 
  \begin{equation}\label{eq:en-dec}
    \H_{c_0,\lambda}(f(t))=\H_{c_0}(f(t))+\frac12\lambda\A(f(t)) \leq K\quad\text{for all $0\leq t<T$}.
  \end{equation} 
\end{remark}
\begin{remark}[Bound on the Willmore energy]\label{rem:est-will-en-along-flow}
  If $f\colon[0,T)\times\Sigma\to\R^3$ is a $(c_0,\lambda)$-Helfrich flow with $c_0,\lambda>0$ and $\H_{c_0,\lambda}(f_0) < \frac{2\lambda}{c_0^2+2\lambda}8\pi$, then
  \begin{equation}\label{eq:will-below-8pi}
    \W(f(t))\leq \frac{2\lambda+c_0^2}{2\lambda}\H_{c_0,\lambda}(f(t))\leq \frac{2\lambda+c_0^2}{2\lambda}\H_{c_0,\lambda}(f_0)\leq 8\pi-\beta
  \end{equation} 
  for some $\beta>0$, using \eqref{eq:est-Will-energy} and \Cref{rem:en-dec}.
\end{remark}

\begin{remark}[Well-posedness of \eqref{eq:helfrich-iv-problem}]\label{rem:well-posedness}
  With similar computations as on \cite[pp.~27~--~28]{kuwertschaetzle2012}, as \eqref{eq:flow-eq} differs from the Willmore flow only in lower-order terms, one finds that \cite{mantegazzamartinazzi2012} applies to the analog of \cite[Equation~(3.1.6)]{kuwertschaetzle2012} to obtain well-posedness of the initial value problem \eqref{eq:helfrich-iv-problem}. Short-time existence with weaker initial regularity in some little-Hölder space is proved in \cite[Theorem~2.1]{kohsakanagasawa2006}.
\end{remark}

The following scaling behavior of the flow and the underlying parameters $c_0$ and $\lambda$ is fundamental for a blow-up analysis. Note that, in contrast to the Willmore energy, $\H_{c_0}$ is not scaling-invariant --- upon scaling, the parameter $c_0$ changes. 

\begin{lemma}\label{lem:par-scal}
  Let $c_0,\lambda\in\R$ and $r>0$, $x\in\R^3$. If $f\colon[0,T)\times\Sigma\to\R^3$ is a $(c_0,\lambda)$-Helfrich flow, then its parabolic rescaling $\widetilde{f}\colon[0,T/r^4)\times\Sigma\to\R^3$ with 
  \begin{equation}
    \widetilde{f}(t,p)=\frac{1}{r}\bigl(f(r^4t,p)-x\bigr)
  \end{equation}
  is an $(rc_0,r^2\lambda)$-Helfrich flow.
\end{lemma}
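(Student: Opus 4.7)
The proof is a direct computation verifying that each term in the evolution equation \eqref{eq:flow-eq} rescales correctly under $f \mapsto \widetilde f = \frac{1}{r}(f - x)$ and $t \mapsto r^4 t$. The plan is to track how the relevant geometric quantities transform, and then match the normal speed.

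First I would record the scaling of the basic geometric objects. Writing $\widetilde f(t,p) = \frac{1}{r}(f(r^4 t, p) - x)$, one has $g_{\widetilde f} = r^{-2} g_f$, so $g_{\widetilde f}^{ij} = r^2 g_f^{ij}$ and $\dd\mu_{\widetilde f} = r^{-2}\dd\mu_f$. The unit normal is unchanged, $\nu_{\widetilde f} = \nu_f$ (with the orientation of $\Sigma$ kept). Then $A_{\widetilde f, ij} = \langle\partial^2_{ij}\widetilde f, \nu_{\widetilde f}\rangle = r^{-1} A_{f,ij}$, so that $H_{\widetilde f} = g_{\widetilde f}^{ij} A_{\widetilde f, ij} = r\, H_f$ and $|A^0_{\widetilde f}|^2 = r^2 |A^0_f|^2$. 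Since the Laplace-Beltrami operator on scalars scales with the inverse metric, $\Delta_{\widetilde f} = r^2 \Delta_f$, and therefore $\Delta_{\widetilde f} H_{\widetilde f} = r^3 \Delta_f H_f$.

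Next I would compute both sides of \eqref{eq:flow-eq} for $\widetilde f$. On the left, by the chain rule,
\begin{equation}
  \partial_t \widetilde f(t,p) = r^3\, \partial_t f(r^4 t, p) = r^3\, \xi_f\, \nu_f = r^3\, \xi_f\, \nu_{\widetilde f},
\end{equation}
where $\xi_f$ denotes the normal speed appearing in \eqref{eq:flow-eq} for $f$ with parameters $(c_0,\lambda)$. On the right, computing the normal speed $\xi_{\widetilde f}$ associated to parameters $(\widetilde c_0, \widetilde\lambda) = (rc_0, r^2\lambda)$ and substituting the scalings above,
\begin{align}
  \xi_{\widetilde f} &= -\Bigl( \Delta_{\widetilde f} H_{\widetilde f} + |A^0_{\widetilde f}|^2 H_{\widetilde f} - \widetilde c_0\bigl(|A^0_{\widetilde f}|^2 - \tfrac12 H_{\widetilde f}^2\bigr) - \bigl(\widetilde\lambda + \tfrac12\widetilde c_0^2\bigr) H_{\widetilde f}\Bigr)\\
  &= -r^3\Bigl(\Delta_f H_f + |A^0_f|^2 H_f - c_0\bigl(|A^0_f|^2 - \tfrac12 H_f^2\bigr) - \bigl(\lambda + \tfrac12 c_0^2\bigr) H_f\Bigr) = r^3 \xi_f,
\end{align}
since every term carries exactly one power of $r$ from the curvature and two additional powers from the inverse metric or the rescaling of $c_0$ and $\lambda$. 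This matches $\partial_t\widetilde f = \xi_{\widetilde f}\,\nu_{\widetilde f}$, completing the claim.

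There is no real obstacle here; the only point requiring care is bookkeeping the $r$-powers consistently and checking that the scaling of $(c_0,\lambda)$ is precisely the one that makes the inhomogeneous, non-scaling-invariant lower-order terms $c_0|A^0|^2$, $c_0 H^2$ and $\lambda H$ produce the common factor $r^3$ shared by the leading term $\Delta H$. This is what dictates the exponents $rc_0$ and $r^2\lambda$ in the statement.
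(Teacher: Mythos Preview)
Your proof is correct and follows essentially the same approach as the paper, which simply records the key scaling identities $H_{\widetilde f}=rH_f$, $|A^0_{\widetilde f}|=r|A^0_f|$ and $r^3\nabla\W_0(f(r^4t))=\nabla\W_0(\widetilde f(t))$ and leaves the remaining bookkeeping to the reader. Your version spells out the computation more fully but is otherwise identical in spirit.
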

\begin{proof}
  The claim is a standard computation using $r^3\nabla\W_0(f(r^4t)) = \nabla\W_0(\widetilde{f}(t))$ and $r H_{f(r^4t)} = H_{\widetilde{f}(t)}$ as well as $r|A^0_{f(r^4t)}| = |A^0_{\widetilde{f}(t)}|$.
\end{proof}

\section{Finite-time singularities for negative spontaneous curvature}

As argued in the introduction, for \Cref{prop:neg-result}, we need to control the sign of $\int_{\S^2}H\dd\mu$ along a Helfrich flow. Since it is generally easier to control energies along gradient flows, we first make the following observation. 

\begin{proposition}\label{prop:pos-av-mc-for-sm-en}
  There exists a constant $0<\alpha_0<8\pi$ such that, for any smooth immersion $f\colon\S^2\to\R^3$ with $\V(f)=-\frac13\int_{\S^2}\langle f,\nu\rangle\dd\mu > 0$ and
  \begin{equation}\label{eq:sm-en}
    \int_{\S^2} |A^0|^2\dd\mu < \alpha_0,
  \end{equation}
  one has $\int_{\S^2} H \dd\mu>0$.
\end{proposition}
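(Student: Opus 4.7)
The plan is a contradiction argument combined with the compactness theorem for weak Sobolev immersions already cited in the introduction.

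Suppose the assertion fails. Then there exists a sequence $f_j\colon\S^2\to\R^3$ of smooth immersions with $\V(f_j)>0$ and $\int_{\S^2}|A^0_{f_j}|^2\dd\mu_{f_j}\to 0$, but $\int_{\S^2}H_{f_j}\dd\mu_{f_j}\leq 0$ for every $j$. All three properties behave well under the natural invariances of the problem: they are preserved under translations (since $\int_{\S^2}\nu\dd\mu=0$ on any closed immersion, $\V$ is translation-invariant), and they are compatible with the dilation $f\mapsto rf$ in the sense that $\int|A^0|^2\dd\mu$ is scale-invariant while the signs of $\V(f)$ and $\int H\dd\mu$ are preserved. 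I normalize $\A(f_j)=4\pi$ and then translate so that the images lie in a fixed bounded region of $\R^3$; Simon's diameter estimate together with the Willmore bound derived below provides the required uniform control of $\diam f_j(\S^2)$.

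By \eqref{eq:int-a0sq}, $\W(f_j)=4\pi+\tfrac12\int|A^0_{f_j}|^2\dd\mu_{f_j}\to 4\pi$, so in particular $\W(f_j)<8\pi$ eventually. Invoking the compactness theory for weak Sobolev immersions of the sphere of bounded Willmore energy \cite{riviere2014,kuwertli2012,mondinoriviere2014,mondinoscharrer2020}, after a conformal reparametrization and passing to a subsequence, $f_j$ converges weakly in $W^{2,2}$ to a weak immersion $f_\infty\colon\S^2\to\R^3$ with $\A(f_\infty)=4\pi$. Lower semicontinuity gives $\int|A^0_{f_\infty}|^2\dd\mu_{f_\infty}=0$, so $f_\infty$ is totally umbilical; the classical rigidity result then forces $f_\infty$ to parametrize a round sphere of radius one.

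The functionals $\V$ and $\int H\dd\mu$ are continuous under $W^{2,2}$-convergence of immersions, so the limit satisfies $\V(f_\infty)\geq 0$ and $\int H_{f_\infty}\dd\mu_{f_\infty}\leq 0$. But a round unit sphere oriented so that $\V\geq 0$ has inward-pointing unit normal, hence $H\equiv 2$ and $\int H_{f_\infty}\dd\mu_{f_\infty}=8\pi>0$, a contradiction. The main obstacle will be applying the compactness theorem in a form strong enough to conclude that the limit is a non-degenerate immersion of $\S^2$ with the correct area (no bubbling, no collapse of area onto a lower-dimensional set). The fact that the energy threshold $\alpha_0<8\pi$ is baked into the statement is precisely to keep $\W(f_j)$ safely below the Li-Yau threshold $8\pi$, which via the cited compactness results rules out such degeneration and also ensures embeddedness along the sequence.
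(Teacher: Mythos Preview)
Your contradiction-plus-compactness strategy is exactly the paper's approach. The outline is sound, and you correctly flag the passage to the limit as the crux. Two points in your sketch are under-justified and the paper handles them explicitly.

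First, the compactness theorem actually invoked (\cite[Theorem~1.5]{mondinoriviere2014}, cf.\ \cite[Theorem~1.6]{mondinoscharrer2020}) does not produce a single weak $W^{2,2}$-limit immersion of $\S^2$; it yields a bubble tree with $N$ limiting weak (possibly branched) immersions $f_\infty^1,\dots,f_\infty^N$. What forces $N=1$ is \emph{not} the Li--Yau bound $\W(f_j)<8\pi$ but the stronger fact $\W(f_j)\to 4\pi$: since each bubble carries Willmore energy at least $4\pi$, the inequality $4\pi N\leq \liminf_j \W(f_j)=4\pi$ gives $N=1$. The bound $\W<8\pi$ is used afterwards, via \cite[Theorem~3.1]{kuwertli2012}, to conclude that the single limit has no branch points (and is embedded). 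Only then do you know $f_\infty$ is a genuine immersion on all of $\S^2$; the regularity \cite[Theorem~4.3]{mondinoscharrer2020} then makes it smooth, which you need before invoking umbilic rigidity.

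Second, asserting that $\V$ and $\int H\dd\mu$ are ``continuous under $W^{2,2}$-convergence'' glosses over a real issue: the convergence is only weak in $W^{2,2}_{\mathrm{loc}}$ away from finitely many points, after $j$-dependent M\"obius reparametrizations. Passing area, signed volume, and total mean curvature to the limit requires the varifold-identification argument of \cite[Theorem~3.3 and Equation~(2.13)]{mondinoscharrer2020}, which the paper cites explicitly. Once these are in place, your umbilicity route and the paper's $\W(f_\infty)=4\pi$ plus Willmore's classification are equivalent endgames.
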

\begin{remark}
  In the above, we require $0<\alpha_0<8\pi$ to ensure that \eqref{eq:sm-en} yields $\W(f)<8\pi$, using \eqref{eq:int-a0sq}, which is useful later on. Note that, by \cite[Corollary~8.4]{scharrerwest2024}, for axi-symmetric spherical immersions in $\R^3$, one can take $\alpha_0=4\pi$ in \eqref{eq:sm-en}.
\end{remark}
\begin{proof}[Proof of \Cref{prop:pos-av-mc-for-sm-en}.]
  For the sake of contradiction, suppose that there is a sequence of immersions $f_j\colon\S^2\to\R^3$ with $\int_{\S^2}|A^0_j|^2\dd\mu_j\to 0$ and  
  \begin{equation}\label{eq:mc-sm-en-1}
    \V(f_j)=-\frac{1}{3}\int_{\S^2}\langle f_j,\nu_j\rangle\dd\mu_j > 0\text{ and }\int_{\S^2}H_j\dd\mu_j\leq 0\quad \text{for all $j\in\N$}.
  \end{equation}
  As the energy in \eqref{eq:sm-en} is scaling-invariant, we may w.l.o.g. suppose that $\A(f_j)=1$ for all $j\in\N$. Arguing as in \cite[Corollary~2.32, using Theorems~1.17~and~1.9]{riviere2016}, we can furthermore w.l.o.g. assume that each $f_j$ is parametrized as a conformal immersion. That is, in any conformal coordinates $x=(x^1,x^2)$ with respect to the standard metric on $\S^2$, one has
  \begin{equation}
    |\partial_{x^1}f_j|^2=|\partial_{x^2}f_j|^2\quad\text{and}\quad \langle \partial_{x^1}f_j,\partial_{x^2}f_j\rangle \equiv 0.
  \end{equation} 
  Using \eqref{eq:a-a0-h}, $\W(f_j)\to 4\pi$ and $\A(f_j)\equiv 1$, one finds
  \begin{equation}
    \limsup_{j\to\infty} \int_{\S^2} 1+|A_j|^2\dd\mu_j = 1 + 8\pi <\infty.
  \end{equation}
  Moreover, by Simon's lower diameter estimate in \cite[Lemma~1.1]{simon1993}, using $\W(f_j)\to4\pi$ and $\A(f_j)\equiv 1$, one finds $\inf_{j\in\N}\mathrm{diam}(f_j(\S^2))>0$. Therefore, \cite[Theorem~1.5]{mondinoriviere2014} (also cf. \cite[Theorem~1.6]{mondinoscharrer2020}) yields that one has the following after passing to a subsequence which, for the sake of convenience, we do not relabel. There exists a family of bilipschitz homeomorphisms $\Psi_j$ of $\S^2$, $N\in\N$, sequences $\Phi_j^1, \ldots , \Phi_j^N$ of positive conformal diffeomorphisms of $\S^2$, and weak (possibly branched) immersions with finite total curvature $f_{\infty}^1 , \ldots , f_{\infty}^N$ (cf. \cite[Definition~1.3]{mondinoscharrer2020}) such that the following is satisfied. For some $f_{\infty}\in W^{1,\infty}(\S^2,\R^3)$, one has
  \begin{equation}
    f_j\circ\Psi_j\to f_{\infty}\quad\text{in $C^0(\S^2,\R^3)$}
  \end{equation}
  and, for points $b^{i,j}\in\S^2$ where $1\leq j\leq N_i$, $1\leq i\leq N$,
  \begin{equation}
    f_j\circ\Phi_j^i\rightharpoonup f_{\infty}^i\quad\text{weakly as $j\to\infty$ in $W^{2,2}_{\mathrm{loc}}(\S^2\setminus\{b^{i,1},\dots,b^{i,N_i}\},\R^3)$}
  \end{equation}
  for $i=1,\dots,N$. Arguing as in \cite[Theorem~3.3]{mondinoscharrer2020} with $T_j=(f_j,f_j)$ and \cite[Equation~(2.13)]{mondinoscharrer2020},
  \begin{equation}\label{eq:mc-sm-en-4}
    4\pi N \leq \sum_{i=1}^N \int_{\S^2} H_{f_{\infty}^i}^2\dd\mu_{f_{\infty}^i} \leq \liminf_{j\to\infty} \int_{\S^2}H_{f_j}^2\dd\mu_j = 4\pi,
  \end{equation}
  so $N=1$ and as $\W(f_{\infty}^1)<8\pi$, $f_{\infty}^1$ is an embedding. Therefore, by \cite[Theorem~3.1]{kuwertli2012}, $f_{\infty}^1$ is a weak conformal immersion without any branch points. Now, as in \cite[Theorem~3.3]{mondinoscharrer2020}, using \eqref{eq:mc-sm-en-1}, one also finds
  \begin{align}
    \int_{\S^2}1\dd\mu_{f_{\infty}^1}=\lim_{j\to\infty}\A(f_j)&=1,\quad \V(f_{\infty})=-\frac13 \int_{\S^2} \langle f_{\infty}^1,\nu_{f_{\infty}^1}\rangle\dd\mu_{f_{\infty}^1} \geq 0,\\
    \int_{\S^2}H_{f_{\infty}^1}\dd\mu_{f_{\infty}^1} &= \lim_{j\to\infty}\int_{\S^2}H_j\dd\mu_j\leq 0.\label{eq:mc-sm-en-3}
  \end{align}
  By \eqref{eq:mc-sm-en-4}, $\W(f_{\infty}^1)=4\pi$ and thus $f_{\infty}^1$ is the absolute minimizer of the Willmore energy. Since $f_{\infty}^1$ further does not have any branch points, \cite[Theorem~4.3]{mondinoscharrer2020} yields $f_{\infty}^1\in C^{\infty}(\S^2,\R^3)$. By classical theory in \cite{willmore1965}, also cf. \cite[Theorem~7.2.2]{willmore1993} or \cite[Proposition~1.1.1]{kuwertschaetzle2012}, $f_{\infty}^1$ parametrizes a round sphere. This however contradicts \eqref{eq:mc-sm-en-3}!
\end{proof}

Now we see how a positive average mean curvature and negative spontaneous curvature come together to result in finite time singularities for the Helfrich flow.

\begin{lemma}[A bound on the maximal existence time via a scaling argument]\label{lem:max-ex-time}
  Let $f\colon[0,\bar{t})\times\S^2\to\R^3$ be a $(c_0,\lambda)$-Helfrich flow with $\int_{\S^2}H\dd\mu \geq 0$ on $[0,\bar{t})$ where $c_0<0$ and $\lambda\geq 0$, then $\bar{t}<\infty$. Moreover, if $\W(f(t))<8\pi$ for all $0\leq t<\bar{t}$, then
  \begin{equation}
    \bar{t} < \frac{4\big((\H_{c_0,\lambda}(f_0))^2-(4\pi)^2\big)}{\pi^2(2\lambda+c_0^2)^2}.
  \end{equation}
\end{lemma}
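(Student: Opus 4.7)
Set $E(t)\defeq\H_{c_0,\lambda}(f(t))$; by \Cref{rem:en-dec}, $E'(t)=-\frac12\int\xi^2\dd\mu$. The plan is to derive the differential inequality $\frac{d}{dt}E(t)^2\leq-\pi^2(c_0^2+2\lambda)^2/4$, which upon integration and using the lower bound $E(t)\geq 4\pi$ (which holds when $\W(f(t))<8\pi$ via \Cref{rem:neg-c0-4pi}) yields the claimed quantitative bound on $\bar t$. For the unconditional statement $\bar t<\infty$, any positive uniform lower bound on $-\frac{d}{dt}E^2$ already suffices.

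The central identity comes from a scaling computation: since $\H_{c_0,\lambda}(rf)=\frac14\int(H-rc_0)^2\dd\mu+\frac{r^2\lambda}{2}\A(f)$, differentiating at $r=1$ and equating with the first variation $\H_{c_0,\lambda}'(f)(f)=-\frac12\int\xi\langle\nu,f\rangle\dd\mu$ (which follows from $\xi\nu=-2\nabla\H_{c_0}(f)-\lambda\nabla\A(f)$, cf.\ \Cref{prop:ev-eq}) yields
\[
  \int\xi\langle\nu,f-x_0\rangle\dd\mu=c_0\int H\dd\mu-(c_0^2+2\lambda)\A(f)
\]
for every $x_0\in\R^3$; independence of $x_0$ follows from translation invariance of $\H_{c_0,\lambda}$, which forces $\int\xi\nu\dd\mu=0$. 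The hypotheses $c_0<0$ and $\int H\dd\mu\geq 0$ then give $|\int\xi\langle\nu,f-x_0\rangle\dd\mu|\geq(c_0^2+2\lambda)\A(f)$, and a Cauchy-Schwarz step yields
\[
  \int\xi^2\dd\mu\geq\frac{(c_0^2+2\lambda)^2\A(f)^2}{\int\langle\nu,f-x_0\rangle^2\dd\mu}.
\]
In view of $\frac{d}{dt}E^2=-E\int\xi^2\dd\mu$, the sought differential inequality reduces to the geometric estimate
\[
  \int_{\S^2}\langle\nu,f-x_0\rangle^2\dd\mu\leq\frac{4 E\,\A(f)^2}{\pi^2}
\]
for an appropriate choice of $x_0\in\R^3$.

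The principal obstacle is establishing this last estimate with the sharp constant $4/\pi^2$. I anticipate choosing $x_0$ as the center of the smallest enclosing ball of $f(\S^2)$, so that $|f-x_0|$ is controlled by the circumradius via Jung's theorem, combined with a Topping/Simon-type diameter bound of the form $\diam(f(\S^2))^2\lesssim\A(f)\,\W(f)$. The Willmore factor $\W$ can then be replaced by a multiple of $E$, either through \eqref{eq:est-Will-energy} when $\lambda>0$, or directly via $\W(f)\leq\H_{c_0}(f)=E$ when $\lambda=0$ (since $\W(f)=\H_{c_0}(f)+\frac{c_0}{2}\int H\dd\mu-\frac{c_0^2}{4}\A(f)\leq\H_{c_0}(f)$ under our sign hypotheses). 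The qualitative conclusion $\bar t<\infty$ requires only a crude form of these estimates, whereas matching the precise constant $\pi^2(c_0^2+2\lambda)^2/4$ demands tracking all numerical factors carefully.
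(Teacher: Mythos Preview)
Your approach is correct and coincides with the paper's: the scaling identity, the Cauchy--Schwarz step, Topping's diameter estimate $\diam\leq\frac{2}{\pi}\sqrt{\A\,\W}$, and the lower bound $E>4\pi$ from \Cref{rem:neg-c0-4pi} are exactly the ingredients the paper uses, and your target differential inequality $(E^2)'\leq -\tfrac{\pi^2}{4}(c_0^2+2\lambda)^2$ is precisely what the paper derives (written there as $E'\leq -\tfrac{\pi^2}{8}(2\lambda+c_0^2)^2/E$).

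Two small simplifications bring your outline in line with the paper and secure the stated constant. First, Jung's theorem and the smallest enclosing ball are unnecessary: simply take $x_0\in f(t,\S^2)$, so that $|\langle\nu,f-x_0\rangle|\leq|f-x_0|\leq\diam$ pointwise and hence $\int\langle\nu,f-x_0\rangle^2\dd\mu\leq\diam^2\A\leq\tfrac{4}{\pi^2}\A^2\W$; this already matches the constant $4/\pi^2$. Second, do not use \eqref{eq:est-Will-energy} for the step $\W\leq E$ when $\lambda>0$, since it introduces the lossy factor $\tfrac{2\lambda+c_0^2}{2\lambda}$; instead, the direct computation you wrote down for $\lambda=0$, namely $\W=\H_{c_0}+\tfrac{c_0}{2}\int H-\tfrac{c_0^2}{4}\A\leq\H_{c_0}$, holds for all $\lambda\geq 0$ and immediately gives $\W\leq\H_{c_0}\leq\H_{c_0,\lambda}=E$, which is exactly what the paper uses.
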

\begin{proof}
  Fix $0\leq t<T$ and $x_t\in f(t,\S^2)$. Then one finds as in \Cref{lem:par-scal} 
  \begin{align}
    \frac{\dd}{\dd r} 2\H_{c_0,\lambda}(r(f-x_t))\Big|_{r=1} &= \frac{\dd}{\dd r} 2\H_{rc_0,r^2\lambda}(f)\Big|_{r=1} = (2\lambda+c_0^2)\A(f)-c_0\int_{\S^2}H\dd\mu\\
    &\geq (2\lambda+c_0^2)\A(f).
  \end{align}
  On the other hand, using \Cref{rem:en-dec}, Topping's improvement of Simons's diameter estimate in \cite[Lemma~1]{topping1998}, Cauchy-Schwarz and $\W(f)\leq \H_{c_0}(f)-\frac14c_0^2\A(f)$,
  \begin{align}
    \frac{\dd}{\dd r}2\H_{c_0,\lambda}(r(f-x_t))\Big|_{r=1} &= -\int_{\S^2} \langle \partial_t f,f-x_t\rangle\dd \mu \leq \mathrm{\diam}(f(t)) \int_{\S^2}|\partial_tf|\dd\mu \\
    &\leq \frac{2}{\pi} \A(f(t)) \sqrt{\W(f(t))} \Big(\int_{\S^2}|\partial_tf|^2\dd\mu\Big)^{\frac12} \\
    &\leq \frac{2}{\pi} \sqrt{\H_{c_0}(f(t))-\frac{1}{4}c_0^2\A(f(t))}\Big(-2\frac{\dd}{\dd t}\H_{c_0,\lambda}(f(t))\Big)^{\frac{1}{2}}\cdot\A(f(t)).
  \end{align}
  That is, estimating $\H_{c_0}(f(t))-\frac{1}{4}c_0^2\A(f(t))\leq\H_{c_0,\lambda}(f(t))$,
  \begin{equation}
    \frac{\dd}{\dd t}\H_{c_0,\lambda}(f(t)) \leq -\frac{\pi^2}{8} \frac{(2\lambda+c_0^2)^2}{\H_{c_0,\lambda}(f(t))},
  \end{equation}
  and using $\H_{c_0,\lambda}(f(t))>4\pi$ in case that $\W(f(t))<8\pi$, cf. \Cref{rem:neg-c0-4pi}, the claim follows.
\end{proof}

With \Cref{prop:pos-av-mc-for-sm-en,lem:max-ex-time} in mind, one can deduce the existence of singularities in finite time if \eqref{eq:sm-en} is satisfied along the flow. Therefore, we make the following estimate.

\begin{lemma}\label{lem:sm-en}
  With $\alpha_0$ as in \Cref{prop:pos-av-mc-for-sm-en}, there exists $0<\bar{\alpha}<\alpha_0$ with the following property. Let $c_0,\lambda\in\R$ not both vanish and consider $0<\alpha_1\leq \bar{\alpha}$. If $f\colon[0,T)\times\S^2\to\R^3$ is a $(c_0,\lambda)$-Helfrich flow such that, for some $0<\varepsilon < 1$,
  \begin{equation}\label{eq:sm-en-alpha1}
    \W_0(f_0)=\int_{\S^2}|A^0|^2\dd\mu\Big|_{t=0} \leq (1-\varepsilon)\alpha_1,
  \end{equation}
  then, writing $\overline{t}\vcentcolon= \sup\{t\in[0,T)\mid \int_{\S^2}|A^0|^2\dd\mu\leq\alpha_1\text{ on $[0,t]$}\}$, one has $\overline{t}=T$ or 
  \begin{equation}
    \overline{t} \geq -\frac{\log(1-\varepsilon)}{C(\lambda^2+c_0^4)}
  \end{equation}
  for some universal constant $C>0$. Moreover, for $0\leq t<\bar{t}$,
  \begin{align}
    \int_{0}^{t} \int_{\S^2}\big(|\nabla^2A|^2+|\nabla A|^2|A|^2+|A^0|^2|A|^4\big)\dd\mu\dd\tau &\leq \big((1-\varepsilon)+C(\lambda^2+c_0^4)t\big)\alpha_1,\\
    \int_0^t \|A^0\|_{L^{\infty}}^4\dd\tau &\leq  C \big((1-\varepsilon)+C(\lambda^2+c_0^4)t\big) \alpha_1^2.\label{eq:sm-en-linfty-estimate}
  \end{align}
\end{lemma}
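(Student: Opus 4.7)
The plan is to derive a differential inequality for $\W_0(f(t)) = \int_{\S^2} |A^0|^2\dd\mu$ of the form
\begin{equation*}
  \frac{\dd}{\dd t} \W_0(f(t)) + c_1 \int_{\S^2} \bigl(|\nabla^2 A|^2 + |\nabla A|^2 |A|^2 + |A^0|^2 |A|^4 \bigr) \dd\mu \leq C(\lambda^2 + c_0^4)\, \W_0(f(t)),
\end{equation*}
valid so long as $\W_0(f(t)) \leq \bar{\alpha}$, where the universal $\bar{\alpha} \in (0,\alpha_0)$ is chosen small enough for the Michael--Simon--Sobolev absorptions below to close. The strategy is that of Kuwert--Sch\"atzle in \cite{kuwertschaetzle2002}, with the novelty being the careful accounting of the Helfrich corrections. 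Decompose the normal speed as $\xi = \xi_W + \xi_{H,\lambda}$ with Willmore part $\xi_W = -\Delta H - |A^0|^2 H$ and lower-order part $\xi_{H,\lambda} = c_0(|A^0|^2 - \tfrac12 H^2) + (\lambda + \tfrac12 c_0^2) H$. Using \eqref{eq:ev-mu} and the evolution of $A_{ij}$, the Willmore part of $\frac{\dd}{\dd t}\W_0$ reproduces Kuwert--Sch\"atzle verbatim, producing the good terms on the left together with absorbable remainders once $\bar{\alpha}$ is small.

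For the contribution of $\xi_{H,\lambda}$, integration by parts (combined with the Codazzi equivalence \eqref{eq:cod-main}) reduces the highest-order pieces to $(\lambda + \tfrac12 c_0^2) \int \langle A^0,\nabla^2 H\rangle\dd\mu$ and $c_0 \int \langle A^0, \nabla^2(|A^0|^2 - \tfrac12 H^2)\rangle\dd\mu$, which via Peter--Paul split off an $\epsilon$-fraction of $\int|\nabla^2 A|^2\dd\mu$ (absorbed into the good terms) plus a remainder of the form $\frac{1}{\epsilon}(\lambda^2 + c_0^4)\W_0(f(t))$ after invoking \eqref{eq:cod-main} and AM--GM to reduce mixed products $c_0^2\lambda$ to $c_0^4 + \lambda^2$. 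The leftover algebraic terms, quartic in curvature (typically $c_0^2\int |A^0|^4$ or $(\lambda + c_0^2)\int |A^0|^2 H^2$), are handled by the Michael--Simon--Sobolev interpolation $\int |A^0|^4\dd\mu \lesssim \W_0 \cdot \int(|\nabla A|^2 + |A|^2|A^0|^2)\dd\mu$; the prefactor $\W_0 \leq \bar{\alpha}$ ensures the result is either absorbed into the good terms or contributes harmlessly to $C(\lambda^2 + c_0^4)\W_0$.

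Once this inequality is established on $[0,\bar t)$, integration yields
\begin{equation*}
  \W_0(f(t)) + c_1\! \int_0^t\!\! \int_{\S^2}\!\!\bigl(|\nabla^2 A|^2 + |\nabla A|^2|A|^2 + |A^0|^2|A|^4\bigr) \dd\mu\, \dd\tau \leq \bigl((1-\varepsilon) + C(\lambda^2 + c_0^4)\, t\bigr)\alpha_1,
\end{equation*}
after using $\W_0(f(\tau))\leq \alpha_1$ on $[0,\bar t)$ and $\W_0(f_0)\leq (1-\varepsilon)\alpha_1$; this is the first ``moreover'' bound. Gronwall applied to the same inequality (discarding the good terms) gives $\W_0(f(t)) \leq (1-\varepsilon)\alpha_1\, e^{C(\lambda^2+c_0^4)t}$, so reaching $\W_0 = \alpha_1$ forces $\bar t \geq -\log(1-\varepsilon)/(C(\lambda^2+c_0^4))$, which is the claimed lower bound on $\bar{t}$. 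Finally, the $L^\infty$-estimate \eqref{eq:sm-en-linfty-estimate} follows from the Kuwert--Sch\"atzle pointwise interpolation $\|A^0\|_{L^\infty}^4 \lesssim \W_0\cdot (|\nabla^2 A|^2 + |\nabla A|^2|A|^2 + |A^0|^2|A|^4)_{\S^2}$ integrated in time, using $\sup_{[0,t]}\W_0 \leq \alpha_1$ together with the integrated good-term bound above.

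The main obstacle is the careful bookkeeping in the first two paragraphs: one must verify that every Helfrich-induced term can be absorbed to produce exactly the scaling $\lambda^2 + c_0^4$ (consistent with \Cref{lem:par-scal}, since under $f \mapsto \frac{1}{r}f$ the pair transforms as $(rc_0, r^2\lambda)$). The delicate point is tuning the Peter--Paul parameter $\epsilon$ uniformly against the smallness threshold $\bar{\alpha}$, so that the Michael--Simon--Sobolev absorption closes independently of $c_0,\lambda$; any looseness here would degrade the lower bound on $\bar t$ and propagate through to the global-existence argument in \Cref{thm:main}.
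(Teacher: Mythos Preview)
Your proposal is correct and follows essentially the same approach as the paper's proof: the paper also derives the differential inequality by computing $\frac{\dd}{\dd t}\W_0 + \int|\nabla\W_0|^2$ along the flow, invokes \cite[Proposition~2.6]{kuwertschaetzle2001} for the coercivity of $\int|\nabla\W_0|^2$ once $\bar{\alpha}$ is small, handles the Helfrich corrections via integration by parts and iterated Young's inequalities (in particular a two-step Peter--Paul to reach the $c_0^4$ scaling from terms like $c_0\int H^2\Delta H$), and then applies Gronwall and the Kuwert--Sch\"atzle $L^{\infty}$ interpolation. The only cosmetic difference is that the paper treats the quartic algebraic remainders such as $c_0^2\int|A^0|^2H^2$ and $(\lambda+c_0^2)\int|A^0|^4$ by direct pointwise Young's inequalities against the good term $\int|A^0|^2|A|^4$ rather than via Michael--Simon--Sobolev, which is slightly simpler and makes the universal choice of $\bar{\alpha}$ (independent of $c_0,\lambda$) more transparent.
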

\begin{proof}
  By continuity, we have $\overline{t}>0$. Consider any $t\in[0,\overline{t})$ so that, particularly, $\W_0(f(t))\leq \alpha_1$. For $0<\eta<1$ to be chosen later, using \Cref{prop:ev-eq} and \eqref{eq:flow-eq},
  \begin{align}
    \frac{\dd}{\dd t} \int_{\S^2}|A^0|^2\dd\mu &\ + \int_{\S^2}|\nabla\W_0(f)|^2\dd\mu\\
    &= \int_{\S^2} \big(c_0|A^0|^2-\frac12c_0H^2+\frac12(c_0^2+\lambda)H\big)\langle\nabla\W_0(f),\nu\rangle \dd\mu\\
    &\leq \frac12 \int_{\S^2} |\nabla\W_0(f)|^2\dd\mu + \frac12 c_0^2\int_{\S^2}|A^0|^4\dd\mu \\
    &\qquad + \frac12\int_{\S^2} \big((c_0^2+\lambda)H-c_0H^2\big)\big(\Delta H+|A^0|^2H\big)\dd\mu.
  \end{align}
  Choosing $\bar{\alpha}$ sufficiently small, applying \cite[Proposition~2.6]{kuwertschaetzle2001} with $\W_0(f(t))\leq\alpha_1$ as in \cite[Equation~(36)]{mccoywheeler2016}, for some universal constant $\widetilde{\delta_0}>0$, 
  \begin{equation}
    \widetilde{\delta_0} \int_{\S^2}\big(|\nabla^2A|^2+|A|^2|\nabla A|^2+|A^0|^2|A|^4\big)\dd\mu \leq \frac12 \int_{\S^2}|\nabla\W_0(f)|^2\dd\mu.
  \end{equation}
  Therefore, one finds
  \begin{align}
    \frac{\dd}{\dd t} &\ \int_{\S^2}|A^0|^2\dd\mu  + \widetilde{\delta_0} \int_{\S^2}\big(|\nabla^2A|^2+|A|^2|\nabla A|^2+|A^0|^2|A|^4\big)\dd\mu\\
    &\leq \frac12 c_0^2\int_{\S^2}|A^0|^4\dd\mu + \frac12\int_{\S^2} \big((c_0^2+\lambda)H-c_0H^2\big)\big(\Delta H+|A^0|^2H\big)\dd\mu.\label{eq:lem-sm-en-1}
  \end{align}
  Using the Simon's identity as in \cite[Equation~(68)]{kuwertschaetzle2001} and integrating by parts, one finds
  \begin{equation}\label{eq:lem-sm-en-2}
    \int_{\S^2}H\big(\Delta H+|A^0|^2H\big)\dd\mu \leq -2\int_{\S^2}|\nabla A^0|^2\dd\mu + C \int_{\S^2}|A^0|^4\dd\mu.
  \end{equation} 
  Moreover, using \eqref{eq:cod-main} and integrating by parts, for some $0<\eta<1$ and a universal constant $C>0$ changing from line to line,
  \begin{align}
    \Big|\frac12c_0\int_{\S^2} H^2\Delta H\dd\mu\Big| &= \Big|c_0 \int_{\S^2} H |\nabla H|^2\dd\mu\Big|\leq C |c_0| \int_{\S^2}|H| |\nabla A^0|^2\dd\mu \\
    &\leq \eta \int_{\S^2}|\nabla A|^2|A|^2\dd\mu + \frac{C}{\eta} c_0^2\int_{\S^2}|\nabla A^0|^2\dd\mu.\label{eq:lem-sm-en-3}
  \end{align}
  Furthermore, integrating by parts, one finds
  \begin{align}
    \frac{C}{\eta}c_0^2\int_{\S^2} |\nabla A^0|^2\dd\mu = -\frac{C}{\eta}c_0^2\int_{\S^2} \langle A^0,\Delta A^0\rangle_{g_f}\dd\mu \leq \eta \int_{\S^2} |\nabla^2 A|^2\dd\mu + \frac{C}{\eta^2}c_0^4\int_{\S^2}|A^0|^2\dd\mu.
  \end{align}
  So, suitably choosing $\eta\in(0,1)$ and combining this estimate with \eqref{eq:lem-sm-en-3} yields for some universal constant $C>0$
  \begin{equation}\label{eq:lem-sm-en-4}
    \Big|\frac12c_0\int_{\S^2} H^2\Delta H\dd\mu\Big| \leq \frac{\widetilde{\delta_0}}{4} \int_{\S^2} \big(|\nabla^2A|^2+|A|^2|\nabla A|^2\big)\dd\mu + C c_0^4 \int_{\S^2}|A^0|^2\dd\mu.
  \end{equation}
  Altogether, with \eqref{eq:lem-sm-en-1}, \eqref{eq:lem-sm-en-2} and \eqref{eq:lem-sm-en-4}, for $\delta_0=\frac12\widetilde{\delta_0}$ and a universal constant $C>0$ changing from line to line, using \eqref{eq:cod-main} and Young's inequality,
  \begin{align}
    \frac{\dd}{\dd t} \int_{\S^2}|A^0|^2\dd\mu&+\frac32\delta_0\int_{\S^2}\big(|\nabla^2 A|^2+|\nabla A|^2|A|^2+|A^0|^2|A|^4\big)\dd\mu \leq C c_0^4 \int_{\S^2} |A^0|^2\dd\mu\\
    &\qquad + C(\lambda+c_0^2)\int_{\S^2}|A^0|^4\dd\mu -\frac12c_0\int_{\S^2}|A^0|^2H^3\dd\mu\\
    &\quad \leq \frac{1}{4}\delta_0\int_{\S^2}|A^0|^2|A|^4\dd\mu + C c_0^2\int_{\S^2}|A^0|^2H^2\dd\mu + C(\lambda^2+c_0^4)\int_{\S^2}|A^0|^2\dd\mu\\
    &\quad \leq \frac{1}{2}\delta_0\int_{\S^2}|A^0|^2|A|^4\dd\mu  + C(\lambda^2+c_0^4)\int_{\S^2}|A^0|^2\dd\mu.
  \end{align}
  Now the Gronwall lemma implies for $t\in[0,\overline{t})$
  \begin{equation}
    \int_{\S^2}|A^0|^2\dd\mu\Big|_{t} \leq (1-\varepsilon)\alpha_1 \exp(C(\lambda^2+c_0^4)t)
  \end{equation}
  and thus the first part of the claim follows. Equation \eqref{eq:sm-en-linfty-estimate} can be concluded from the above estimates, using $\int_{\S^2}|A^0|^2\dd\mu \leq \alpha_1$ for $0\leq t<\bar{t}$ and the interpolation estimate in \cite[Lemma~2.8]{kuwertschaetzle2001}, also cf. \cite[Theorem~2.9]{kuwertschaetzle2001} with $\rho\to\infty$.
\end{proof}

\begin{corollary}\label{cor:fin-time-sing}
  Let $c_0<0$ and $\lambda\geq 0$ and consider an embedding $f_0\colon\S^2\to\R^3$ with
  \begin{equation}
    \int_{\S^2}|A^0_{f_0}|^2\dd\mu_{f_0} = 2(\W(f_0)-4\pi) \leq \exp\Big(-\frac{4C}{\pi^2}\big((\H_{c_0,\lambda}(f_0))^2-(4\pi)^2\big)\Big)\bar{\alpha}
  \end{equation}
  where $C>0$, $\bar{\alpha}>0$ are the universal constants in \Cref{lem:sm-en}. Then the maximal $(c_0,\lambda)$-Helfrich flow $f\colon[0,T)\times\S^2\to\R^3$ starting in $f_0$ satisfies  
  \begin{equation}
    T < \frac{4\big((\H_{c_0,\lambda}(f_0))^2-(4\pi)^2\big)}{\pi^2(2\lambda+c_0^2)^2}.
  \end{equation}
\end{corollary}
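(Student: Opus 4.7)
The plan is to chain the three main tools just developed: \Cref{lem:sm-en} propagates small $\int_{\S^2}|A^0|^2\dd\mu$ forward in time, \Cref{prop:pos-av-mc-for-sm-en} upgrades this smallness into positivity of $\int_{\S^2}H\dd\mu$, and \Cref{lem:max-ex-time} converts that positivity (together with $c_0<0$) into a finite-time singularity bounded above by
\[
T^\ast\vcentcolon=\frac{4\bigl((\H_{c_0,\lambda}(f_0))^2-(4\pi)^2\bigr)}{\pi^2(2\lambda+c_0^2)^2}.
\]
The only delicate point is the calibration of constants, which ultimately rests on the elementary inequality $(2\lambda+c_0^2)^2\geq \lambda^2+c_0^4$ for $\lambda\geq 0$.

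\textbf{Main computation.} I would apply \Cref{lem:sm-en} with $\alpha_1=\bar{\alpha}$ and $\varepsilon\vcentcolon=1-\exp(-K)\in(0,1)$, where $K\vcentcolon=\tfrac{4C}{\pi^2}\bigl((\H_{c_0,\lambda}(f_0))^2-(4\pi)^2\bigr)$. Positivity $K>0$ holds because $c_0<0$ and $f_0$ is an embedding, so $\W(f_0)<8\pi$ and \Cref{rem:neg-c0-4pi} gives $\H_{c_0,\lambda}(f_0)\geq \H_{c_0}(f_0)>4\pi$. The corollary's hypothesis is then precisely $\W_0(f_0)\leq(1-\varepsilon)\bar{\alpha}$, so \Cref{lem:sm-en} yields either $\bar t=T$ or
\[
\bar t\;\geq\;\frac{-\log(1-\varepsilon)}{C(\lambda^2+c_0^4)}\;=\;\frac{4\bigl((\H_{c_0,\lambda}(f_0))^2-(4\pi)^2\bigr)}{\pi^2(\lambda^2+c_0^4)}\;\geq\;T^\ast,
\]
where the final inequality is $(2\lambda+c_0^2)^2=4\lambda^2+4\lambda c_0^2+c_0^4\geq\lambda^2+c_0^4$. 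In either case, $\int_{\S^2}|A^0|^2\dd\mu\leq \bar{\alpha}$ on $[0,\min(T,T^\ast))$.

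\textbf{Finishing.} For every $t\in[0,\min(T,T^\ast))$, Gauss--Bonnet gives $\W(f(t))=4\pi+\tfrac12\W_0(f(t))<4\pi+\tfrac12\bar{\alpha}<8\pi$ (using $\bar{\alpha}<\alpha_0<8\pi$), so the Li--Yau inequality makes $f(t)$ an embedding and the orientation convention \eqref{eq:choice-of-orientation} forces $\V(f(t))>0$. Combined with $\W_0(f(t))<\alpha_0$, \Cref{prop:pos-av-mc-for-sm-en} gives $\int_{\S^2}H\dd\mu>0$. The restriction of $f$ to $[0,\min(T,T^\ast))$ is therefore a $(c_0,\lambda)$-Helfrich flow meeting every hypothesis of \Cref{lem:max-ex-time}, including $\W<8\pi$, so that lemma forces $\min(T,T^\ast)<T^\ast$. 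Hence $T<T^\ast$, as claimed.

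\textbf{Main obstacle.} The individual ingredients are already in place, so the real work is bookkeeping: tuning the exponential factor $\exp(-K)$ in the hypothesis to simultaneously balance the growth rate $C(\lambda^2+c_0^4)$ from \Cref{lem:sm-en} and the singular-time scaling $(2\lambda+c_0^2)^2$ from \Cref{lem:max-ex-time}. The inequality $(2\lambda+c_0^2)^2\geq \lambda^2+c_0^4$ for $\lambda\geq 0$ is exactly what makes this matching succeed. A subsidiary point is checking that the auxiliary hypothesis $\V(f(t))>0$ required by \Cref{prop:pos-av-mc-for-sm-en} is preserved along the flow, but this is immediate from Li--Yau embeddedness together with the fixed orientation convention.
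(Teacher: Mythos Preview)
Your proof is correct and follows essentially the same route as the paper's: propagate smallness of $\W_0$ via \Cref{lem:sm-en}, feed that into \Cref{prop:pos-av-mc-for-sm-en} to secure $\int_{\S^2}H\dd\mu>0$, and then invoke \Cref{lem:max-ex-time}, with the inequality $(2\lambda+c_0^2)^2\geq\lambda^2+c_0^4$ reconciling the two time-scales. One expository slip: the clause ``$f_0$ is an embedding, so $\W(f_0)<8\pi$'' reverses the Li--Yau implication---but $\W(f_0)<8\pi$ is not actually needed there, since \Cref{rem:neg-c0-4pi} applies directly to embeddings, and your argument goes through unchanged.
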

\begin{proof}
  By \Cref{lem:sm-en}, setting
  \begin{equation}
    t_0\vcentcolon=\frac{4\big((\H_{c_0,\lambda}(f_0))^2-(4\pi)^2\big)}{\pi^2(\lambda^2+c_0^4)},
  \end{equation}
  on $[0,\min\{t_0,T\})$, one has that
  \begin{equation}
    \int_{\S^2}|A^0|^2\dd\mu\Big|_{t} \leq \bar{\alpha}.
  \end{equation}
  Thus, using $\bar{\alpha}<\alpha_0$, \Cref{prop:pos-av-mc-for-sm-en} and \Cref{lem:max-ex-time} yield
  \begin{equation}
    \min\{t_0,T\} < \frac{4\big((\H_{c_0,\lambda}(f_0))^2-(4\pi)^2\big)}{\pi^2(2\lambda+c_0^2)^2} < t_0,
  \end{equation}
  i.e. $\min\{t_0,T\}=T$ and the claim follows.
\end{proof}

This already concludes the first part of the proof of \Cref{prop:neg-result}. The second part of the statement is proved in \Cref{prop:spherical-shrinker}, using the blow-up construction which is also fundamental in proving \Cref{thm:main}.

\section{Localized energy estimates}\label{sec:loc-en}

This section is devoted to the fundamental estimates which enable a suitable blow-up construction in later sections. To this end, following \cite[Section~3 respectively]{kuwertschaetzle2001,rupp2024,rupp2023}, one localizes the energy decay of $\int |A|^2\dd \mu$ to obtain a life-span theorem and suitable uniform bounds on derivatives of the curvature as long as an energy concentration is controlled. This contrasts the previous section which is concerned with results on surfaces which are energetically close to round spheres --- for these, already results on the decay of $\int|A^0|^2\dd\mu$ are important. For the life-span result however, we need to study possible concentrations of the full second fundamental form. Moreover, this section's computations and the life-span theorem are valid on general oriented, compact and closed surfaces $\Sigma$ of some genus $g\in\N_0$, not just for spheres.

While the computations here are very similar to past works, due to the scaling behavior in \Cref{lem:par-scal}, it is important to track the dependence of all estimates on the parameters $c_0$ and $\lambda$ in order to give a precise life-span result which applies for a blow-up sequence as constructed in the next section --- this is necessary since, along a blow-up, $c_0$ and $\lambda$ change as in \Cref{lem:par-scal}. While in \cite[Theorem~1.2]{liu2012}, a life-span theorem for the $(c_0,\lambda)$-Helfrich flow is already obtained, the dependence of the constants on $c_0$ and $\lambda$ is not explicitly tracked. Therefore, in the following computations, we give special attention to these parameters.

\begin{lemma}\label{lem:loc-1}
  Let $c_0,\lambda\in\R$ and $f\colon[0,T)\times\Sigma\to\R^3$ be a $(c_0,\lambda)$-Helfrich flow. If $\widetilde{\eta}\in C_c^{\infty}(\R^3)$ and $\eta=\widetilde{\eta}\circ f$, 
  \begin{align}
    \frac{\dd}{\dd t}&\int_{\Sigma}\frac12 H^2\eta\dd \mu + \int_{\Sigma} |\nabla\W_0(f)|^2\eta\dd\mu\\
    &= (\lambda+\frac12c_0^2) \int_{\Sigma}\Delta H H\eta\dd\mu + (\lambda+\frac12c_0^2)\int_{\Sigma}|A^0|^2H^2\eta\dd \mu\\
    &\quad-2\int_{\Sigma}\langle\nabla\W_0(f),\nu\rangle\langle\nabla H,\nabla\eta\rangle_{g_f}\dd\mu - \int_{\Sigma}\langle\nabla\W_0(f),\nu\rangle H\Delta\eta\dd\mu + \frac12\int_{\Sigma}H^2\partial_t\eta\dd\mu\\
    &\quad+c_0\int_{\Sigma}(|A^0|^2-\frac12H^2)(\langle\nabla\W_0(f),\nu\rangle\eta + 2\langle \nabla H,\nabla\eta\rangle_{g_f}+H\Delta\eta)\dd\mu
  \end{align}
  as well as  
  \begin{align}
    \frac{\dd}{\dd t}&\int_{\Sigma}|A^0|^2\eta\dd\mu+\int_{\Sigma}|\nabla\W_0(f)|^2\eta\dd\mu\\
    &= (2\lambda+c_0^2)\int_{\Sigma}\langle\nabla^2H,A^0\rangle_{g_f}\eta\dd\mu + (\lambda+\frac12c_0^2)\int_{\Sigma}|A^0|^2H^2\eta\dd\mu\\
    &\quad-2\int_{\Sigma}\langle\nabla\W_0(f),\nu\rangle\bigl(\langle A^0,\nabla^2\eta\rangle_{g_f}+\langle \nabla H,\nabla\eta\rangle_{g_f}\bigr)\dd\mu \\
    &\quad + c_0\int_{\Sigma}(|A^0|^2-\frac12H^2) \bigl( \langle\nabla\W_0(f),\nu\rangle\eta + 2\langle A^0,\nabla^2\eta\rangle_{g_f}+2\langle \nabla H,\nabla\eta\rangle_{g_f} \bigr)\dd\mu\\
    &\quad + \int_{\Sigma}|A^0|^2\partial_t\eta\dd\mu.
  \end{align}
\end{lemma}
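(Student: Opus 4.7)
The plan is to differentiate each integrand, apply the evolution equations of the preceding lemma to replace time-derivatives by spatial expressions involving $\xi$, integrate by parts to move all derivatives off $\xi$ (using that $\Sigma$ is closed), and finally substitute the flow equation \eqref{eq:flow-eq} rewritten as $\xi = -\langle\nabla\W_0(f),\nu\rangle + c_0(|A^0|^2 - \tfrac12 H^2) + (\lambda + \tfrac12 c_0^2)H$. The three pieces of $\xi$ then produce, term by term, the three groups of terms on the right-hand side of each identity.

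For the first identity, the product rule combined with \eqref{eq:ev-mu}, \eqref{eq:ev-H} and the algebraic identity $H|A|^2 - \tfrac12 H^3 = H|A^0|^2$ (from \eqref{eq:a-a0-h}) gives
\begin{equation*}
\frac{d}{dt}\int_\Sigma \tfrac12 H^2 \eta \, d\mu = \int_\Sigma H\eta\Delta\xi \, d\mu + \int_\Sigma H|A^0|^2 \xi\eta \, d\mu + \int_\Sigma \tfrac12 H^2 \partial_t\eta \, d\mu.
\end{equation*}
Two integrations by parts on the Laplacian followed by folding the cubic $H|A^0|^2\xi\eta$ term into the principal term via $\Delta H = \langle\nabla\W_0(f),\nu\rangle - |A^0|^2 H$ reduce the right-hand side to $\int\xi\eta\langle\nabla\W_0(f),\nu\rangle d\mu + 2\int\xi\langle\nabla H,\nabla\eta\rangle_{g_f} d\mu + \int\xi H\Delta\eta \, d\mu + \tfrac12\int H^2 \partial_t\eta \, d\mu$. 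Substituting the three pieces of $\xi$ produces: $-\int|\nabla\W_0(f)|^2\eta \, d\mu$, which moves left; the $c_0$-line of the claim; and $(\lambda+\tfrac12 c_0^2)\int \eta H\Delta H \, d\mu + (\lambda + \tfrac12 c_0^2)\int \eta H^2|A^0|^2 \, d\mu$ plus a remainder that vanishes since $2\int H\langle\nabla H,\nabla\eta\rangle_{g_f} d\mu + \int H^2\Delta\eta \, d\mu = 0$ by one further integration by parts.

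The second identity follows the same three-step template once the pointwise identity $\partial_t|A^0|^2 = 2\langle A^0,\nabla^2\xi\rangle_{g_f} + 2H|A^0|^2\xi$ is in hand. This is derived from $\partial_t g^{ij} = 2A^{ij}\xi$ together with the $A$-evolution of the preceding lemma, the splitting $A = A^0 + \tfrac12 Hg$, and the two-dimensional Cayley--Hamilton relation $2\,\tr(A^3) = 3H|A|^2 - H^3$, which produces the exact cancellation leaving $2H|A^0|^2$. Combining with \eqref{eq:ev-mu} and performing two integrations by parts on $\int \langle A^0,\nabla^2\xi\rangle_{g_f}\eta \, d\mu$, using the Codazzi contraction $\nabla_j A^{0,ij} = \tfrac12\nabla^i H$, transforms the time-derivative into $\int\xi\eta\Delta H \, d\mu + 2\int\xi\langle\nabla H,\nabla\eta\rangle_{g_f} d\mu + 2\int\xi\langle A^0,\nabla^2\eta\rangle_{g_f} d\mu + \int|A^0|^2 \partial_t\eta \, d\mu$ after folding in the $H|A^0|^2\xi\eta$ term. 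Substituting $\xi$ yields the claim, once the $(\lambda+\tfrac12 c_0^2)H$-contribution is rewritten via the identity
\begin{equation*}
\int \eta H\Delta H \, d\mu + 2\int H\langle\nabla H,\nabla\eta\rangle_{g_f} d\mu + 2\int H\langle A^0,\nabla^2\eta\rangle_{g_f} d\mu = 2\int\langle\nabla^2 H, A^0\rangle_{g_f}\eta \, d\mu,
\end{equation*}
both sides of which reduce to $-\int\eta|\nabla H|^2 d\mu - 2\int A^{0,ij}\nabla_i H\,\nabla_j\eta \, d\mu$ after integration by parts and one more use of Codazzi.

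I expect the main obstacle to be bookkeeping rather than analysis: each identity involves three pieces of $\xi$ interacting with three boundary-type terms from integration by parts, so roughly nine interactions must be tracked and several delicate cancellations verified (most notably the vanishing remainder in the first identity and the two-step Codazzi reduction in the second). No functional inequalities are needed; the only non-elementary inputs are the evolution equations of the preceding lemma, the Codazzi contraction, and the algebraic identities \eqref{eq:a-a0-h} and $2\,\tr(A^3) = 3H|A|^2 - H^3$.
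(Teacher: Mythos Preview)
Your proposal is correct and follows essentially the same route as the paper. The only organizational difference is in the second identity: the paper quotes from \cite{rupp2023,kuwertschaetzle2001} the divergence-form evolution $\partial_t(|A^0|^2\dd\mu)=2\nabla_i(\nabla_j\xi\,A^0_{ij})\dd\mu-\nabla_j\xi\,\nabla_jH\dd\mu+|A^0|^2H\xi\dd\mu$ and integrates by parts once, whereas you derive the pointwise identity $\partial_t|A^0|^2=2\langle A^0,\nabla^2\xi\rangle_{g_f}+2H|A^0|^2\xi$ via Cayley--Hamilton and integrate by parts twice; expanding the divergence with Codazzi shows these are the same computation.
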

\begin{proof}
  Using \eqref{eq:ev-mu}, \eqref{eq:ev-H} and \cite[(31) and (32)]{kuwertschaetzle2001}, writing $\partial_tf=\xi\nu$,
  \begin{align}
    \partial_t&\int_{\Sigma}\frac12H^2\eta\dd\mu + \int_{\Sigma}|\nabla\W_0(f)|^2\eta\dd\mu\\
    &= \int_{\Sigma}\lambda H(\Delta H+|A^0|^2H)\eta\dd\mu+\int_{\Sigma} (2\xi\langle\nabla H,\nabla\eta\rangle_{g_f}+H\xi\Delta\eta)\dd\mu\\
    &\quad+c_0\int_{\Sigma}(|A^0|^2-\frac12H(H-c_0))(\Delta H+|A^0|^2H)\eta\dd\mu+\frac12\int_{\Sigma}H^2\partial_t\eta\dd\mu.
  \end{align}
  Plugging in \eqref{eq:flow-eq} and using $\Delta(H\eta) = \Delta H\eta + 2\langle \nabla H,\nabla \eta\rangle_{g_f}+H\Delta \eta$, one finds
  \begin{align}
    \partial_t&\int_{\Sigma}\frac12H^2\eta\dd\mu + \int_{\Sigma}|\nabla\W_0(f)|^2\eta\dd\mu\\
    &= \int_{\Sigma}(\lambda+\frac12c_0^2) H(\Delta (H\eta)+|A^0|^2H\eta)\dd\mu\\
    &\quad-2\int_{\Sigma}\langle\nabla\W_0(f),\nu\rangle\langle\nabla H,\nabla\eta\rangle_{g_f}\dd\mu-\int_{\Sigma}\langle\nabla\W_0(f),\nu\rangle H\Delta\eta\dd\mu\\
    &\quad+c_0\int_{\Sigma}(|A^0|^2-\frac12H^2)(\langle\nabla\W_0(f),\nu\rangle\eta + 2\langle \nabla H,\nabla\eta\rangle_{g_f}+H\Delta\eta)\dd\mu+\frac12\int_{\Sigma}H^2\partial_t\eta\dd\mu.
  \end{align}
  The first claim follows after integrating by parts. Moreover, as in \cite[proof of Lemma B.1]{rupp2023} and \cite[p.~423]{kuwertschaetzle2001}, one finds in the coordinates of a local orthonormal frame
  \begin{align}
    \partial_t(|A^0|^2\dd\mu) &= 2\nabla_i(\nabla_j\xi A^0(e_i,e_j))\dd\mu-\nabla_j\xi\nabla_jH\dd\mu+|A^0|^2H\xi\dd\mu\\
    &=2\nabla_i(\nabla_j\xi A^0(e_i,e_j))\dd\mu-\nabla_j(\xi\nabla_jH)\dd\mu+(\Delta H+|A^0|^2H)\xi\dd\mu.
  \end{align}
  Again with \eqref{eq:flow-eq}, 
  \begin{align}
    \partial_t&(|A^0|^2\dd\mu) + |\nabla\W_0(f)|^2\dd\mu \\
    &= 2\nabla_i(\nabla_j\xi A^0(e_i,e_j))\dd\mu-\nabla_j(\xi\nabla_jH)\dd\mu\\
    &\quad + c_0(|A^0|^2-\frac12H^2)\langle\nabla\W_0(f),\nu\rangle \dd\mu+(\lambda+\frac12c_0^2) H \langle\nabla\W_0(f),\nu\rangle\dd\mu.
  \end{align}
  Integrating by parts and using $\nabla_iH=2(\nabla_jA^0)_{ij}$ by Codazzi-Mainardi, one obtains
  \begin{align}
    \partial_t&\int_{\Sigma}|A^0|^2\eta\dd\mu+\int_{\Sigma}|\nabla\W_0(f)|^2\eta\dd\mu\\
    &= \int_{\Sigma} 2\xi A^0_{ij}\nabla_{ij}^2\eta+2\xi\nabla_jH\nabla_j\eta + (\lambda+\frac12c_0^2) H\langle\nabla\W_0(f),\nu\rangle\eta\dd\mu\\
    &\quad +\int_{\Sigma}c_0(|A^0|^2-\frac12H^2)\langle\nabla\W_0(f),\nu\rangle \eta \dd\mu + \int_{\Sigma} |A^0|^2\partial_t\eta\dd\mu.
  \end{align}
  Integration by parts and $\nabla_iH=2(\nabla_jA^0)_{ij}$ yields
  \begin{equation}
    (\lambda+\frac12c_0^2)\int_{\Sigma} H(2A^0_{ij}\nabla^2_{ij}\eta+2\nabla_jH\nabla_j\eta+\Delta H\eta)\dd\mu = 2(\lambda+\frac12c_0^2)\int_{\Sigma}\langle\nabla^2H,A^0\rangle_{g_f}\eta\dd\mu,
  \end{equation}
  so that the claim follows.
\end{proof}

As in \cite{kuwertschaetzle2001,rupp2023,rupp2024}, consider $\widetilde{\gamma}\in C_c^{\infty}(\R^3)$ with $|\widetilde{\gamma}|\leq 1$ and define $\gamma=\widetilde{\gamma}\circ f$. Fix some $\Lambda > 0$ which satisfies $\|D\widetilde{\gamma}\|_{\infty}\leq\Lambda$ and $\|D^2\widetilde{\gamma}\|_{\infty}\leq\Lambda^2$. As on \cite[p. 8]{rupp2023}, one estimates
\begin{equation}\label{eq:gamma}
  |\nabla\gamma|\leq \Lambda\quad\text{and}\quad |\nabla^2\gamma|\leq C(\Lambda^2+|A|\Lambda).
\end{equation}

\begin{lemma}\label{lem:loc-2}
  In the setting of \Cref{lem:loc-1} and with $\gamma$ as above, one has
  \begin{align}
    \frac{\dd}{\dd t}&\int_{\Sigma} |A|^2\gamma^4\dd\mu + \frac32\int_{\Sigma}|\nabla\W_0(f)|^2\gamma^4\dd\mu\\
    &\leq C (\lambda+c_0^2)\int_{\Sigma}\bigl(|\nabla^2H||A|+|A|^4\bigr)\gamma^4+\Lambda|A|^3\gamma^3\dd\mu+ C c_0^2\int_{\Sigma}|A|^4\gamma^4\dd\mu\\
    &\quad + C\Lambda^4\int_{\{\gamma>0\}}|A|^2\dd\mu+C\Lambda^2\int_{\Sigma}|A|^4\gamma^2\dd\mu .
  \end{align}
\end{lemma}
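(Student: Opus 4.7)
The plan is to apply \Cref{lem:loc-1} with $\eta=\gamma^4$, summing the two identities with weights dictated by the Gauss identity $|A|^2=|A^0|^2+\frac12 H^2$ (cf. \eqref{eq:a-a0-h}). The sum produces on the left-hand side
\begin{equation}
  \frac{\dd}{\dd t}\int_{\Sigma}|A|^2\gamma^4\dd\mu + 2\int_{\Sigma}|\nabla\W_0(f)|^2\gamma^4\dd\mu,
\end{equation}
so that after absorbing at most $\frac12\int|\nabla\W_0(f)|^2\gamma^4\dd\mu$ into the left-hand side via Young's inequality the claimed factor $\frac32$ will remain.

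Next I would organize the right-hand side into four groups. (i) The \emph{leading order} contributions $(\lambda+\frac12 c_0^2)\int \Delta H\cdot H\gamma^4\dd\mu$ and $(2\lambda+c_0^2)\int\langle\nabla^2H,A^0\rangle_{g_f}\gamma^4\dd\mu$ together with $(\lambda+\tfrac12 c_0^2)\int|A^0|^2H^2\gamma^4\dd\mu$ are controlled directly: the second yields exactly the term $C(\lambda+c_0^2)\int |\nabla^2H||A|\gamma^4$, the third matches $C(\lambda+c_0^2)\int |A|^4\gamma^4$ after \eqref{eq:a-a0-h}, and the first is reduced to the previous two after one integration by parts (moving one derivative off $\Delta H$ and using Codazzi \eqref{eq:cod-main}; any boundary-derivative terms produced this way fall into group (iii) below). (ii) The $c_0$-terms $c_0\int(|A^0|^2-\frac12H^2)\langle\nabla\W_0(f),\nu\rangle\gamma^4\dd\mu$ are estimated by Young's inequality to extract $\frac14\int|\nabla\W_0(f)|^2\gamma^4\dd\mu$ (absorbed) and $Cc_0^2\int |A|^4\gamma^4\dd\mu$ — this is the source of the isolated $Cc_0^2\int|A|^4\gamma^4$ term in the statement. (iii) The \emph{cutoff-derivative terms} $\int\langle\nabla\W_0(f),\nu\rangle\bigl(\langle A^0,\nabla^2\gamma^4\rangle_{g_f}+\langle\nabla H,\nabla\gamma^4\rangle_{g_f}\bigr)\dd\mu$ and the $c_0$-analogues are handled using $|\nabla\gamma^4|\leq 4\Lambda\gamma^3$ and $|\nabla^2\gamma^4|\leq C\gamma^2(\Lambda^2+\Lambda|A|\gamma)$ from \eqref{eq:gamma} combined with Young's inequality; this produces the remaining piece of $\frac14\int|\nabla\W_0(f)|^2\gamma^4\dd\mu$ to absorb, the $C\Lambda|A|^3\gamma^3$ contribution, and the lower-order summands $C\Lambda^4\int_{\{\gamma>0\}}|A|^2\dd\mu$ and $C\Lambda^2\int|A|^4\gamma^2\dd\mu$.

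Group (iv) is the time-derivative term $\int|A|^2\partial_t\gamma^4\dd\mu$, which is the subtlest point. Here $\partial_t\gamma=\langle D\widetilde\gamma\circ f,\xi\nu\rangle$ with the normal speed $\xi$ from \eqref{eq:flow-eq}, so
\begin{equation}
  |\partial_t\gamma^4|\leq 4\Lambda\gamma^3|\xi|\leq C\Lambda\gamma^3\bigl(|\nabla\W_0(f)|+|c_0||A|^2+(\lambda+c_0^2)|A|\bigr).
\end{equation}
Each factor is paired with $|A|^2\gamma^3\Lambda$ and bounded by Young's inequality so as to contribute another small multiple of $|\nabla\W_0(f)|^2\gamma^4$ (absorbed into the left-hand side) together with terms of type $\Lambda^2|A|^4\gamma^2$ and $\Lambda^4|A|^2$ on $\{\gamma>0\}$; the $(\lambda+c_0^2)$ and $c_0^2$ prefactors that appear get merged with the matching terms already present on the right-hand side.

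The principal obstacle is purely bookkeeping: the parameters $c_0$ and $\lambda$ must be tracked through every Young's and Cauchy-Schwarz step so that the final right-hand side has exactly the prefactor structure $(\lambda+c_0^2)$, $c_0^2$, $\Lambda^2$, $\Lambda^4$ stated in the lemma. This is crucial because, as explained at the beginning of \Cref{sec:loc-en}, the blow-up argument in the next section will rescale both $c_0$ and $\lambda$ according to \Cref{lem:par-scal} and any hidden dependence on these parameters would spoil the life-span theorem. Apart from this care, all ingredients (integration by parts, Young's inequality, the pointwise bounds \eqref{eq:gamma}, and the Codazzi equivalence \eqref{eq:cod-main}) are routine.
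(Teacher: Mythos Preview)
Your outline is essentially the paper's own argument: sum the two identities of \Cref{lem:loc-1} with $\eta=\gamma^4$, then absorb small multiples of $\int|\nabla\W_0(f)|^2\gamma^4\dd\mu$ via Young's inequality while tracking the $(c_0,\lambda)$-prefactors. Two small points, however.

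First, in group~(i) no integration by parts is needed for $(\lambda+\tfrac12c_0^2)\int H\,\Delta H\,\gamma^4\dd\mu$: since $|\Delta H|\le C|\nabla^2H|$ and $|H|\le C|A|$, this term is dominated \emph{pointwise} by $C(\lambda+c_0^2)|\nabla^2H||A|\gamma^4$, which is already one of the allowed right-hand side contributions. Your proposed integration by parts would instead produce $\int|\nabla H|^2\gamma^4\dd\mu$, which is harder to close.

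Second --- and this is the only genuine omission --- in group~(iii) the term $\int\langle\nabla\W_0(f),\nu\rangle\langle\nabla H,\nabla\gamma^4\rangle_{g_f}\dd\mu$ does \emph{not} yield the stated lower-order terms by Young's inequality alone. After $|\nabla\gamma^4|\le 4\Lambda\gamma^3$ and Young you are left with $C\Lambda^2\int|\nabla H|^2\gamma^2\dd\mu$, and $|\nabla H|$ appears nowhere on the right-hand side of the lemma. The paper closes this via the interpolation
\begin{equation}
  \int_{\Sigma}|\nabla H|^2\gamma^2\dd\mu \le \eta\int_{\Sigma}|\nabla\W_0(f)|^2\gamma^4\dd\mu + \frac{C}{\eta}\Lambda^2\int_{\{\gamma>0\}}H^2\dd\mu + C\int_{\Sigma}|A|^4\gamma^2\dd\mu,
\end{equation}
which is obtained (as in \cite[proof of Lemma~3.2]{kuwertschaetzle2001}) by integrating $\int|\nabla H|^2\gamma^2\dd\mu$ by parts and using $\Delta H=\langle\nabla\W_0(f),\nu\rangle-|A^0|^2H$. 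This feeds another small multiple of $\int|\nabla\W_0(f)|^2\gamma^4\dd\mu$ back into the left-hand side and produces precisely the $C\Lambda^4\int_{\{\gamma>0\}}|A|^2\dd\mu$ and $C\Lambda^2\int|A|^4\gamma^2\dd\mu$ terms you claim. The same estimate is needed for the $c_0$-analogue $c_0\int(|A^0|^2-\tfrac12H^2)\langle\nabla H,\nabla\gamma^4\rangle_{g_f}\dd\mu$. Once you insert this step, your argument matches the paper's.
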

\begin{proof}
  Using \eqref{eq:a-a0-h}, \Cref{lem:loc-1} and $\langle \nabla^2\varphi,A\rangle_{g_f}=\langle \nabla^2\varphi,A^0\rangle_{g_f}+\frac12H\Delta \varphi$ which applies for any $\varphi\in C^{\infty}([0,T)\times\Sigma)$, one finds
  \begin{align}
    &\frac{\dd}{\dd t}\int_{\Sigma}|A|^2\gamma^4\dd\mu+2\int_{\Sigma}|\nabla\W_0(f)|^2\gamma^4\dd\mu\\
    &= (2\lambda+c_0^2) \int_{\Sigma}\bigl(\langle\nabla^2H,A\rangle_{g_f}+|A^0|^2H^2\bigr)\gamma^4\dd\mu\\
    &\quad -2\int_{\Sigma}\langle\nabla\W_0(f),\nu\rangle\bigl(2\langle\nabla H,\nabla\gamma^4\rangle_{g_f}+\langle\nabla^2\gamma^4,A\rangle_{g_f}\bigr)\dd\mu+\int_{\Sigma}|A|^2\partial_t\gamma^4\dd\mu\\
    &\quad + c_0\int_{\Sigma}(|A^0|^2-\frac12H^2)\bigl( 2\langle\nabla^2\gamma^4,A\rangle_{g_f}+4\langle\nabla H,\nabla\gamma^4\rangle_{g_f}+2\langle\nabla\W_0(f),\nu\rangle\gamma^4 \bigr)\dd\mu.
  \end{align}
  We now estimate the individual terms. A straight-forward computations yields 
  \begin{equation}
    |\partial_t\gamma^4|\leq C\Lambda\gamma^3\bigl( |\nabla\W_0(f)|+(|\lambda|+c_0^2)|A|+|c_0||A|^2 \bigr).
  \end{equation}
  Moreover, using $|\nabla^2\gamma^4|\leq C(\Lambda^2+\Lambda |A|\gamma)\gamma^2$, for some $\eta>0$,
  \begin{align}
    \int_{\Sigma} |\nabla\W_0(f)||\nabla^2\gamma^4||A|\dd\mu &\leq \eta \int_{\Sigma}|\nabla\W_0(f)|^2\gamma^4\dd\mu + C(\eta)\Lambda^2\int_{\Sigma}|A|^4\gamma^2\dd\mu\\
    &\quad+C(\eta)\Lambda^4\int_{\{\gamma>0\}}|A|^2\dd\mu.
  \end{align}
  Furthermore, as in \cite[proof of Lemma 3.2]{kuwertschaetzle2001}, one obtains
  \begin{equation}\label{eq:loc-2-1}
    \int_{\Sigma}|\nabla H|^2\gamma^2\dd\mu\leq \eta \int_{\Sigma} |\nabla\W_0(f)|^2\gamma^4\dd\mu + \frac{C}{\eta}\Lambda^2\int_{\{\gamma>0\}} H^2\dd\mu + C\int_{\Sigma}|A|^4\gamma^2\dd\mu.
  \end{equation}
  Therefore, 
  \begin{align}
     \int_{\Sigma}|\nabla\W_0(f)||\nabla H||\nabla \gamma^4|\dd\mu &\leq \eta \int_{\Sigma}|\nabla\W_0(f)|^2\gamma^4\dd\mu+C(\eta)\Lambda^2\int_{\Sigma}|A|^4\gamma^2\dd\mu\\
     &\quad+C(\eta)\Lambda^4\int_{\{\gamma>0\}}H^2\dd\mu.
  \end{align}
  The terms including $c_0$ in the last line can be dealt with using Young's inequality and again \eqref{eq:loc-2-1}, $|\nabla^2\gamma^4|\leq C(\Lambda^2+|A|\Lambda\gamma)\gamma^2$ and, by \eqref{eq:cod-main} 
  \begin{equation}
    \bigl|c_0(|A^0|^2-\frac12H^2)\bigr|\leq C |c_0||A|^2,
  \end{equation}
  proceeding similarly as above.
\end{proof}

\begin{proposition}\label{prop:en-est}
  There exist universal constants $\varepsilon_0,\delta_0,C\in(0,\infty)$ such that, for any $f$ and $\gamma$ as in \Cref{lem:loc-2}, if
  \begin{equation}
    \int_{\{\gamma>0\}} |A|^2\dd\mu<\varepsilon_0\quad\text{at some time $t\in[0,T)$},
  \end{equation}
  then at time $t$ one has
  \begin{align}
    \frac{\dd}{\dd t}&\int_{\Sigma}|A|^2\gamma^4\dd\mu + \delta_0 \int_{\Sigma} \bigl( |\nabla^2 A|^2+|A|^2|\nabla A|^2+|A|^6 \bigr)\gamma^4\dd\mu\\
    &\leq C\Lambda^4\int_{\{\gamma>0\}}|A|^2\dd\mu + C(\lambda^2+c_0^4)\int_{\Sigma}|A|^2\gamma^4\dd\mu.
  \end{align}
\end{proposition}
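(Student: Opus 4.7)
The approach follows the two-step template of \cite{kuwertschaetzle2001,rupp2023,rupp2024}. First, I combine \Cref{lem:loc-2} with a localized coercivity estimate that, under the smallness hypothesis, lets the dominant $\int_\Sigma|\nabla\W_0(f)|^2\gamma^4\dd\mu$ term on the LHS control the full third-order energy $\int_\Sigma(|\nabla^2 A|^2+|A|^2|\nabla A|^2+|A|^6)\gamma^4\dd\mu$ modulo a $\Lambda^4$-remainder on $\{\gamma>0\}$. Second, I absorb each $(c_0,\lambda)$-dependent term on the RHS of \Cref{lem:loc-2} via Young's inequality into the coercive third-order energy plus the two prescribed residuals $C\Lambda^4\int_{\{\gamma>0\}}|A|^2$ and $C(\lambda^2+c_0^4)\int|A|^2\gamma^4$.

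\textbf{Localized coercivity.} Choose $\varepsilon_0>0$ small enough that the Michael-Simon-Sobolev-based interpolation argument from \cite[Section~4]{kuwertschaetzle2001}, applied with the cutoff $\gamma^4$ instead of a global weight and combined with the Codazzi bound \eqref{eq:cod-main}, yields a universal $\widetilde\delta_0>0$ with
\begin{equation*}
\widetilde\delta_0\int_\Sigma\bigl(|\nabla^2 A|^2+|A|^2|\nabla A|^2+|A|^6\bigr)\gamma^4\dd\mu \leq \int_\Sigma|\nabla\W_0(f)|^2\gamma^4\dd\mu + C\Lambda^4\int_{\{\gamma>0\}}|A|^2\dd\mu
\end{equation*}
whenever $\int_{\{\gamma>0\}}|A|^2\dd\mu<\varepsilon_0$. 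This is the localized analogue of \cite[Proposition~2.6]{kuwertschaetzle2001} (used globally in the proof of \Cref{lem:sm-en}); plugging it into \Cref{lem:loc-2} already moves the desired coercivity onto the LHS, up to the $\Lambda^4$-remainder on the RHS.

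\textbf{Absorbing the perturbative terms.} The four perturbative terms on the RHS of \Cref{lem:loc-2} are dispatched by repeated application of Young's inequality. The first, $C(\lambda+c_0^2)\int|\nabla^2 H||A|\gamma^4\dd\mu$, splits via $|\nabla^2 H|\lesssim|\nabla^2 A|$ and weighted Young into $\eta\int|\nabla^2 A|^2\gamma^4+C(\eta)(\lambda^2+c_0^4)\int|A|^2\gamma^4$. The quartic-in-$|A|$ contributions $C(\lambda+c_0^2)\int|A|^4\gamma^4$, $Cc_0^2\int|A|^4\gamma^4$ and $C\Lambda^2\int|A|^4\gamma^2$ split via the scheme $|A|^4\gamma^{4-k}=|A|^3\gamma^2\cdot|A|\gamma^{2-k}$ with $k=0$ respectively $k=2$, each bounded by $\eta|A|^6\gamma^4+C(\eta)|A|^2\gamma^{4-2k}$; after absorbing the $|A|^6\gamma^4$ part into the coercivity, the residuals land in $(\lambda^2+c_0^4)\int|A|^2\gamma^4$ for $k=0$ and $\Lambda^4\int_{\{\gamma>0\}}|A|^2$ for $k=2$. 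Finally, the mixed term $C(\lambda+c_0^2)\Lambda\int|A|^3\gamma^3$ splits via $|A|^3\gamma^3=|A|^2\gamma\cdot|A|\gamma^2$ and Young into an $|A|^4\gamma^2$ piece (handled as above) plus an $|A|^2\gamma^4$ piece with coefficient $(\lambda+c_0^2)\Lambda$, where $(\lambda+c_0^2)\Lambda\leq\frac12\bigl((\lambda+c_0^2)^2+\Lambda^2\bigr)$ decouples the $\Lambda$- and $(c_0,\lambda)$-contributions into the two prescribed residual shapes (using $\Lambda\geq 1$ without loss of generality). Choosing $\eta>0$ small so that all $\int(|\nabla^2A|^2+|A|^2|\nabla A|^2+|A|^6)\gamma^4$ remainders absorb into half of the coercivity of the first step yields the claim with $\delta_0\vcentcolon=\widetilde\delta_0/2$.

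\textbf{Main obstacle.} The careful bookkeeping of Young's inequalities in the absorption step --- in particular, decoupling $\Lambda$ from $(c_0,\lambda)$ in the mixed $(\lambda+c_0^2)\Lambda|A|^3\gamma^3$ term --- so that the final residuals lie exclusively in the two prescribed shapes $C\Lambda^4\int_{\{\gamma>0\}}|A|^2$ and $C(\lambda^2+c_0^4)\int|A|^2\gamma^4$, without spurious mixed products. Precision in tracking the polynomial dependence on $(c_0,\lambda)$ is essential because the blow-up analysis of \Cref{sec:blow-up} rescales $c_0\mapsto rc_0$ and $\lambda\mapsto r^2\lambda$ (cf.\ \Cref{lem:par-scal}), so any stray polynomial factor would spoil the uniform life-span estimate needed to pass to the concentration limit.
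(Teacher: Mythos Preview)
Your proposal is correct and follows essentially the same two-step route as the paper (localized coercivity via \cite[Proposition~2.6 and Lemma~4.2]{kuwertschaetzle2001}, \cite[Proposition~3.2]{rupp2023}, then Young absorption of the $(c_0,\lambda)$-terms, which the paper defers to \cite[Proposition~3.3]{rupp2024}).

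One small caveat in your handling of the mixed term $(\lambda+c_0^2)\Lambda\int_{\Sigma}|A|^3\gamma^3\dd\mu$: with the generic factorization $|A|^3\gamma^3=|A|^2\gamma\cdot|A|\gamma^2$, both Young pieces inherit the full scalar $(\lambda+c_0^2)\Lambda$, and the resulting $(\lambda+c_0^2)\Lambda\int_{\Sigma}|A|^4\gamma^2\dd\mu$ piece, run through your ``as above'' recipe (or after your decoupling $(\lambda+c_0^2)\Lambda\leq\tfrac12((\lambda+c_0^2)^2+\Lambda^2)$), produces a residual of the form $(\lambda+c_0^2)^2\Lambda^2\int_{\{\gamma>0\}}|A|^2\dd\mu$ or $(\lambda+c_0^2)^4\int_{\{\gamma>0\}}|A|^2\dd\mu$ that fits neither prescribed bucket. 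The fix is immediate: put the scalars into the Young split itself, writing $(\lambda+c_0^2)\Lambda|A|^3\gamma^3=\bigl((\lambda+c_0^2)|A|\gamma^2\bigr)\cdot\bigl(\Lambda|A|^2\gamma\bigr)$, which gives directly $\tfrac12(\lambda+c_0^2)^2\int_{\Sigma}|A|^2\gamma^4\dd\mu+\tfrac12\Lambda^2\int_{\Sigma}|A|^4\gamma^2\dd\mu$, both landing cleanly without any auxiliary decoupling or the $\Lambda\geq1$ assumption.
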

\begin{proof}
  By \cite[Proposition 3.2]{rupp2023}, \cite[Proposition 2.6 and Lemma 4.2]{kuwertschaetzle2001}, at time $t$, the following interpolation inequality holds.
  \begin{equation}
    \int_{\Sigma} \bigl( |\nabla^2 A|^2+|A|^2|\nabla A|^2+|A|^6 \bigr)\gamma^4\dd\mu\leq C\int_{\Sigma}|\nabla\W_0(f)|^2\gamma^4\dd\mu+C\Lambda^4\int_{\{\gamma>0\}}|A|^2\dd\mu.
  \end{equation}
  Using \Cref{lem:loc-2}, there thus exists $\delta_0\in(0,\infty)$ with
  \begin{align}
    \frac{\dd}{\dd t}&\int_{\Sigma} |A|^2\gamma^4\dd\mu + 2\delta_0\int_{\Sigma} \bigl( |\nabla^2 A|^2+|A|^2|\nabla A|^2+|A|^6 \bigr)\gamma^4\dd\mu\\
    &\leq C(\lambda+c_0^2)\int_{\Sigma}\bigl(|\nabla^2H||A|+|A|^4\bigr)\gamma^4+\Lambda|A|^3\gamma^3\dd\mu+ C c_0^2 \int_{\Sigma}|A|^4\gamma^4\dd\mu\\
    &\quad + C\Lambda^4\int_{\{\gamma>0\}}|A|^2\dd\mu+C\Lambda^2\int_{\Sigma}|A|^4\gamma^2\dd\mu .
  \end{align}
  Proceeding as in \cite[Proposition 3.3]{rupp2024}, one finds that
  \begin{align}
    \frac{\dd}{\dd t}&\int_{\Sigma} |A|^2\gamma^4\dd\mu + \frac32\delta_0\int_{\Sigma} \bigl( |\nabla^2 A|^2+|A|^2|\nabla A|^2+|A|^6 \bigr)\gamma^4\dd\mu\\
    &\leq C\Lambda^4\int_{\{\gamma>0\}}|A|^2\dd\mu + C(\lambda^2+c_0^4)\int_{\Sigma}|A|^2\gamma^4\dd\mu + C c_0^2\int_{\Sigma}|A|^4\gamma^4\dd\mu.
  \end{align}
  The claim follows noting that $c_0^2\int_{\Sigma}|A|^4\gamma^4\dd\mu\leq \eta \int_{\Sigma}|A|^6\gamma^4\dd\mu + C(\eta)c_0^4 \int_{\Sigma}|A|^2\gamma^4\dd\mu$.
\end{proof}

Still following the arguments of \cite{kuwertschaetzle2001,rupp2023,rupp2024}, we introduce the following notation.

\begin{definition}
  Consider a family of immersions $f\colon[0,T)\times\Sigma\to\R^3$. For $t\in[0,T)$ and $r>0$, the \emph{concentration of curvature function} is defined as 
  \begin{equation}
    \scurv(t,r)=\sup_{x\in\R^3} \int_{B_r(x)} |A|^2\dd\mu.
  \end{equation} 
\end{definition}

With this notation, one obtains the following corollary as an integrated version of \Cref{prop:en-est}.

\begin{corollary}\label{cor:en-est}
  Let $\varepsilon_0\in(0,\infty)$, $\delta_0\in(0,\infty)$ as in \Cref{prop:en-est}. There exists a uniform constant $C>0$ such that, for $c_0,\lambda\in\R$ and a $(c_0,\lambda)$-Helfrich flow $f\colon[0,T)\times\Sigma\to\R^3$, one has the following.

  If there exists $\rho>0$ such that
  \begin{equation}
    \scurv(t,\rho)<\varepsilon_0\quad\text{for all $t\in[0,T)$},
  \end{equation}
  then, for all $x\in\R^3$ and $0\leq t<T$,
  \begin{align}
    &\int_{B_{\frac{\rho}{2}}(x)}|A|^2\dd\mu\Big|_t + \delta_0\int_{0}^{t}\int_{B_{\frac{\rho}{2}}(x)}\bigl( |\nabla^2A|^2+|A|^2|\nabla A|^2+|A|^6 \bigr)\dd\mu\dd\tau\\
    &\leq \int_{B_{\rho}(x)}|A|^2\dd\mu\Big|_{t=0} + \frac{C}{\rho^4} \int_{0}^{t}\int_{B_{\rho}(x)}|A|^2\dd\mu\dd\tau + C(\lambda^2+c_0^4)\int_{0}^{t}\int_{B_{\rho}(x)}|A|^2\dd\mu\dd\tau.
  \end{align}
\end{corollary}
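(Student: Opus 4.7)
The plan is to pass from the pointwise-in-time differential estimate of \Cref{prop:en-est} to the integrated version by fixing an appropriate cutoff function and integrating in $t$. First, for arbitrary $x \in \R^3$, I would choose $\widetilde{\gamma} \in C_c^\infty(\R^3)$ such that $\widetilde{\gamma} \equiv 1$ on $B_{\rho/2}(x)$, $\mathrm{supp}\,\widetilde{\gamma} \subset B_\rho(x)$, and $|\widetilde{\gamma}| \leq 1$ with $\|D\widetilde{\gamma}\|_\infty \leq C/\rho$ and $\|D^2\widetilde{\gamma}\|_\infty \leq C/\rho^2$. Such a cutoff is standard (obtained by mollifying a radial bump), so we may take $\Lambda = C/\rho$ in \eqref{eq:gamma}. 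Setting $\gamma = \widetilde{\gamma} \circ f$, the support condition gives $\{\gamma > 0\} \subset f^{-1}(B_\rho(x))$.

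Second, the hypothesis $\scurv(t,\rho) < \varepsilon_0$ for all $t \in [0,T)$ forces
\begin{equation}
  \int_{\{\gamma > 0\}} |A|^2 \dd\mu \leq \int_{B_\rho(x)} |A|^2\dd\mu \leq \scurv(t,\rho) < \varepsilon_0
\end{equation}
at every time $t$, so the smallness assumption of \Cref{prop:en-est} is verified uniformly in $t$. Applying that proposition yields the differential inequality
\begin{align}
  \frac{\dd}{\dd t}\int_{\Sigma}|A|^2\gamma^4\dd\mu &+ \delta_0 \int_{\Sigma} \bigl( |\nabla^2 A|^2+|A|^2|\nabla A|^2+|A|^6 \bigr)\gamma^4\dd\mu\\
  &\leq \frac{C}{\rho^4}\int_{B_\rho(x)}|A|^2\dd\mu + C(\lambda^2+c_0^4)\int_{\Sigma}|A|^2\gamma^4\dd\mu
\end{align}
on all of $[0,T)$, where I have absorbed $\Lambda^4 = C/\rho^4$ and used that $\{\gamma > 0\} \subset f^{-1}(B_\rho(x))$.

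Third, I would integrate this inequality in time from $0$ to $t$. For the left-hand side, $\gamma^4 \geq \mathbf{1}_{f^{-1}(B_{\rho/2}(x))}$, so the boundary and time-integral terms control $\int_{B_{\rho/2}(x)}|A|^2\dd\mu|_t$ and the localized integral of $|\nabla^2 A|^2 + |A|^2|\nabla A|^2 + |A|^6$ on $[0,t] \times f^{-1}(B_{\rho/2}(x))$. For the right-hand side, $\gamma^4 \leq \mathbf{1}_{f^{-1}(B_\rho(x))}$, so both terms are bounded by integrals over $B_\rho(x)$. The initial-data contribution $\int_\Sigma |A|^2 \gamma^4\dd\mu|_{t=0}$ likewise sits inside $\int_{B_\rho(x)} |A|^2\dd\mu|_{t=0}$. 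Rearranging these yields the asserted inequality.

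There is no real obstacle here; this is essentially the routine integration and cutoff-choice argument used in \cite{kuwertschaetzle2001,rupp2024,rupp2023}. The only point requiring any care is tracking that the $C$ appearing in the final statement is genuinely universal, in particular independent of $c_0$ and $\lambda$, which is already built into \Cref{prop:en-est} thanks to the explicit dependence on $\lambda^2 + c_0^4$ isolated there.
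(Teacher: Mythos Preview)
Your proposal is correct and follows essentially the same argument as the paper: choose a cutoff $\widetilde{\gamma}$ with $\chi_{B_{\rho/2}(x)}\leq\widetilde{\gamma}\leq\chi_{B_{\rho}(x)}$ and derivative bounds of order $C/\rho$, set $\Lambda=C/\rho$, and integrate the differential inequality of \Cref{prop:en-est} in time. The paper's proof is in fact terser than yours, omitting the explicit verification of the smallness hypothesis and the pointwise comparison of $\gamma^4$ with the indicator functions, but the content is identical.
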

\begin{proof}
  Choose $\widetilde{\gamma}\in C_c^{\infty}(\R^3)$ with $\chi_{B_{\frac{\rho}{2}}(x)}\leq \widetilde{\gamma}\leq \chi_{B_{\rho}(x)}$, $\|D\widetilde{\gamma}\|_{\infty}\leq C/\rho$ and $\|D^2\widetilde{\gamma}\|_{\infty}\leq C/\rho^2$. The claim follows by simply integrating the estimate in \Cref{prop:en-est} in time with $\Lambda=C/\rho$.
\end{proof}

\section{Construction of a blow-up}\label{sec:blow-up}

\subsection{Bounds on higher-order derivatives of the second fundamental form}

This section's main result is the following analog of \cite[Proposition~3.7]{rupp2024} which in turn is based on \cite[Theorem~3.5]{kuwertschaetzle2001}, using the higher-order interpolation estimates in \Cref{prop:ho-est}.

\begin{proposition}\label{prop:ho-en-est}
  Let $c_0,\lambda\in\R$ and $\varepsilon_0$ as in \Cref{prop:en-est}. Further consider a $(c_0,\lambda)$-Helfrich flow $f\colon[0,T)\times\Sigma\to\R^3$ and suppose that $\rho>0$ satisfies $T\leq T^*\rho^4$ for some $0<T^*<\infty$ and
  \begin{equation}
    \scurv(t,\rho)\leq\varepsilon<\varepsilon_0\quad\text{for all $0\leq t<T$}.
  \end{equation}
  Further, assume 
  \begin{equation}\label{eq:dep-c0-lam}
    (\lambda^2+c_0^4)T\leq\overline{L}<\infty.
  \end{equation}
  For all $t\in(0,T)$ and $m\in\N_0$, one has the local estimates
  \begin{align}
    \|\nabla^m A\|_{L^2(B_{\frac{\rho}{8}}(x))}&\leq C(m,T^*,\overline{L})\sqrt{\varepsilon}t^{-\frac{m}{4}},\\
    \|\nabla^m A\|_{L^{\infty}}&\leq C(m,T^*,\overline{L})\sqrt{\varepsilon}t^{-\frac{m+1}{4}},\label{eq:ho-en-est-loc}
  \end{align}
  and the full $L^2(\dd\mu)$-bounds
  \begin{equation}
    \|\nabla^m A\|_{L^2(\dd\mu)}\leq C(m,T^*,\overline{L})t^{-\frac{m}{4}}\Bigl(\int_{\Sigma}|A|^2\dd\mu\big|_{t=0}\Bigr)^{\frac12}.\label{eq:ho-en-est-full}
  \end{equation}
\end{proposition}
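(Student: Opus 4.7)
The plan is to follow the Kuwert--Schätzle bootstrap scheme for the Willmore flow, adapted along the lines of \cite{rupp2023,rupp2024}, while carefully tracking the explicit dependence of every constant on $c_0$ and $\lambda$ through the single scale-invariant parameter $\overline{L}$. My first step would be to reduce to the case $\rho = 1$ via the parabolic rescaling of \Cref{lem:par-scal}: setting $\widetilde f(t,p) = \rho^{-1}(f(\rho^4 t,p) - x_0)$ produces a $(\rho c_0, \rho^2 \lambda)$-Helfrich flow on $[0,T/\rho^4)$, on which the rescaled lifespan is still bounded by $T^*$, the curvature-concentration assumption is unchanged (that quantity being scale-invariant), and the rescaled combined parameter satisfies $((\rho^2\lambda)^2 + (\rho c_0)^4)(T/\rho^4) = (\lambda^2+c_0^4) T \leq \overline{L}$. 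Thus it suffices to prove everything at $\rho = 1$ and pull the bounds back through the standard scaling relations $|\widetilde\nabla^m \widetilde A| = \rho^{m+1}|\nabla^m A|$ and $\dd\widetilde\mu = \rho^{-2}\dd\mu$, under which the claimed inequalities are dimensionally consistent.

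At unit scale, the base case $m = 0$ follows essentially from \Cref{cor:en-est}: localizing on $B_{1/4}(x)$ gives a pointwise-in-time bound $\int_{B_{1/8}(x)} |A|^2 \dd\mu \leq C(T^*,\overline{L})\varepsilon$ together with a spacetime bound of the same size on $\int(|\nabla^2 A|^2 + |A|^2|\nabla A|^2 + |A|^6)\dd\mu\,\dd\tau$, where the $(\lambda^2+c_0^4)$-contribution on the right-hand side of that corollary is absorbed into $C(T^*,\overline{L})$. The $L^\infty$-bound $\|A\|_{L^\infty} \leq C\sqrt\varepsilon\,t^{-1/4}$ then comes from a standard parabolic smoothing: applying the interpolation inequalities of \Cref{prop:ho-est} on the time slice $[t/2,t]$ converts the spacetime $L^2$-control on $\nabla^2 A$ and the $L^\infty L^2$-control on $A$ into pointwise $L^\infty$-control with the expected $t^{-1/4}$-loss.

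The heart of the argument is the inductive step on $m$. For each $m \geq 1$, I would derive an evolution equation for the localized quantity $\int_\Sigma |\nabla^m A|^2 \gamma^{s_m}\dd\mu$ with a cutoff exponent $s_m$ chosen large enough to balance all cutoff derivatives appearing upon integration by parts. Relative to the Willmore case, the additional terms produced by the $c_0$- and $\lambda$-contributions in \eqref{eq:flow-eq} either carry a top-order derivative $\nabla^{m+2}A$, in which case they are absorbed into the coercive term coming from $|\nabla\W_0(f)|^2$ via Young's inequality, or produce lower-order quantities with a prefactor of $(\lambda^2+c_0^4)$ which, upon time integration on $[0,T]$, are bounded by the inductive hypothesis together with $\overline{L}$. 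Feeding the resulting differential inequality into the higher-order interpolation estimates of \Cref{prop:ho-est} and running the time-slicing of \cite[Theorem~3.5]{kuwertschaetzle2001} then yields the local $L^2$-bound with the correct $t^{-m/4}$ scaling, and the same Sobolev-based conversion as in the base case provides the $L^\infty$-bound \eqref{eq:ho-en-est-loc}. For the global estimate \eqref{eq:ho-en-est-full}, the same induction is run with $\gamma \equiv 1$ and $\Lambda = 0$, so that the cutoff error disappears entirely and a Gronwall argument directly yields the stated bound in terms of $\int_\Sigma |A|^2\dd\mu|_{t=0}$.

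The hardest part will be the careful bookkeeping in this induction step --- specifically, showing that every lower-order $c_0$- or $\lambda$-contribution to the evolution of $\int_\Sigma |\nabla^m A|^2\gamma^{s_m}\dd\mu$ can be absorbed using only the scale-invariant combination $(\lambda^2+c_0^4)T$ rather than $c_0$ or $\lambda$ in isolation. This is essential because only this combined quantity is bounded by $\overline{L}$ under the hypothesis, and only it transforms correctly under \Cref{lem:par-scal}; without this restriction, the resulting estimate would be useless for the blow-up sequences later constructed in \Cref{sec:blow-up}, along which $c_0$ and $\lambda$ themselves may well diverge.
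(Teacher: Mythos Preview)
Your proposal is correct and follows essentially the same route as the paper: reduce to $\rho=1$ by the parabolic rescaling of \Cref{lem:par-scal} (using that $(\lambda^2+c_0^4)T$ is scale-invariant), then run the Kuwert--Sch\"atzle/Rupp bootstrap with \Cref{cor:en-est} for the base case and \Cref{prop:ho-est} for the induction. One minor point of phrasing: \Cref{prop:ho-est} already \emph{is} the localized differential inequality for $\int_\Sigma|\nabla^m A|^2\gamma^s\dd\mu$ with the correct $(\lambda^2+c_0^4)$-prefactor on the lower-order terms, so you need not derive a separate evolution equation and then ``feed it into'' that proposition --- you simply integrate \Cref{prop:ho-est} in time and close via Gronwall and the induction hypothesis on $\|A\|_{L^\infty}$.
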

\begin{proof}[Sketch of a proof.]
  Especially using that \eqref{eq:dep-c0-lam} is invariant with respect to parabolic rescaling by \Cref{lem:par-scal}, we may w.l.o.g. suppose $\rho=1$. Then setting $K(t)=\int_{B_1(0)}|A|^2\dd\mu$ and $L(t)\equiv (\lambda^2+c_0^4)$, one can adapt the exact same proof as for \cite[Proposition~3.7]{rupp2024}, using \Cref{cor:en-est} and \Cref{prop:ho-est}. So the details can be safely omitted here.
\end{proof}
\begin{remark}\label{rem:dep-c0-lam}
  The main feature of \Cref{prop:ho-en-est} is how the constants in the above estimates depend on $c_0$ and $\lambda$. That is an important insight since we later apply \Cref{prop:ho-en-est} to a blow-up sequence of a $(c_0,\lambda)$-Helfrich flow where $c_0$ and $\lambda$ change. \Cref{lem:par-scal} and \eqref{eq:dep-c0-lam} show that the scaling behavior of $c_0$, $\lambda$ and the time $T$ under parabolic scaling of the Helfrich flow cancels exactly in the right way in the constants of \Cref{prop:ho-en-est} --- as it should be.
\end{remark}

\subsection{Life-span result}

\begin{proposition}\label{prop:life}
  Let $\varepsilon_0>0$ be as in \Cref{prop:en-est}. Then there exists a universal constant $0<\overline{\varepsilon}<\min\{\varepsilon_0,8\pi\}$ with the following property.

  If $c_0,\lambda\in\R$ and $f$ is a $(c_0,\lambda)$-Helfrich flow with
  \begin{equation}
    \scurv(0,\rho)\leq\varepsilon<\overline{\varepsilon}\quad\text{for some $\rho>0$},
  \end{equation}
  then the maximal existence time $T$ of $f$ satisfies $\bigl(1+\rho^4(\lambda^2+c_0^4)\bigr)T>\hat{c}\rho^4$ for some universal $\hat{c}\in(0,1)$ and
  \begin{equation}
    \scurv(t,\rho)\leq\hat{c}^{-1}\varepsilon\quad\text{for all $t\in\bigl[0,\frac{\hat{c}\rho^4}{1+\rho^4(\lambda^2+c_0^4)}\bigr]$}.
  \end{equation}
\end{proposition}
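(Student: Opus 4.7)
The entire statement is invariant under the parabolic rescaling of \Cref{lem:par-scal}: under the transformation $(c_0,\lambda)\mapsto(rc_0,r^2\lambda)$, $T\mapsto T/r^4$, $\rho\mapsto\rho/r$, one verifies directly that the quantities $\rho^4(\lambda^2+c_0^4)$, $T/\rho^4$, and $\scurv(t,\rho)$ are all preserved. I therefore reduce to $\rho=1$ and set up a continuity (bootstrap) argument via
$$t^*\defeq \sup\Bigl\{ t\in[0,T)\mathrel{\Big|} \scurv(\tau,1)\le \hat c^{-1}\varepsilon\text{ for all }\tau\in[0,t]\Bigr\},$$
which is strictly positive by continuity of $\scurv$ and the hypothesis $\scurv(0,1)\le\varepsilon<\hat c^{-1}\varepsilon$.

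\textbf{Iteration via \Cref{cor:en-est}.} Fix $\hat c\in(0,1)$ and $\overline\varepsilon$ with $\hat c^{-1}\overline\varepsilon<\min\{\varepsilon_0,8\pi\}$ (constants to be pinned down below). Then $\scurv(\tau,1)\le\hat c^{-1}\varepsilon<\varepsilon_0$ on $[0,t^*)$, so \Cref{cor:en-est} applies with radius $\rho=1$ and gives, for every $x\in\R^3$ and $t\in[0,t^*)$,
$$\int_{B_{1/2}(x)}|A|^2\dd\mu\Big|_t\;\le\;\varepsilon + C\bigl(1+\lambda^2+c_0^4\bigr)\int_0^t\scurv(\tau,1)\dd\tau.$$
A standard Vitali-type covering of any $B_1(y)$ by a dimensional number $N$ of half-radius balls yields $\scurv(t,1)\le N\scurv(t,1/2)$, and hence
$$\scurv(t,1)\;\le\;N\varepsilon + NC\bigl(1+\lambda^2+c_0^4\bigr)\int_0^t\scurv(\tau,1)\dd\tau\qquad\text{on }[0,t^*).$$
For $t\in\bigl[0,\hat c/(1+\lambda^2+c_0^4)\bigr]$, the bootstrap bound $\scurv(\tau,1)\le\hat c^{-1}\varepsilon$ controls the integral by $\hat c^{-1}\varepsilon\cdot\hat c$, so $\scurv(t,1)\le N(1+C)\varepsilon$. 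Choosing the universal $\hat c\in(0,1)$ so small that $2N(1+C)\le\hat c^{-1}$ produces a strict inequality $\scurv(t,1)\le\tfrac12\hat c^{-1}\varepsilon<\hat c^{-1}\varepsilon$ on the whole interval $[0,\min\{T,\hat c/(1+\lambda^2+c_0^4)\}]$, closing the bootstrap.

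\textbf{Ruling out early singularities.} It remains to exclude $T\le\hat c/(1+\lambda^2+c_0^4)$. If that held, the previous step would yield $\scurv(t,1)\le\hat c^{-1}\varepsilon<\varepsilon_0$ throughout $[0,T)$ and $(\lambda^2+c_0^4)T\le\hat c$, so \Cref{prop:ho-en-est} applies with $T^*=\overline L=\hat c$ and furnishes uniform $L^\infty$-bounds on every $\nabla^m A$ on the time slab $[T/2,T)\times\Sigma$. Together with the local well-posedness of \Cref{rem:well-posedness}, these smooth bounds permit restarting the flow at times $t\nearrow T$ and extending it past $T$, contradicting the maximality of $T$. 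The one delicate point throughout the argument is that the constants $\hat c$ and $\overline\varepsilon$ must be genuinely universal (independent of $c_0,\lambda,\rho$); this works exactly because, as noted in \Cref{rem:dep-c0-lam}, the constants in \Cref{cor:en-est} and \Cref{prop:ho-en-est} depend on $c_0$ and $\lambda$ only through the scale-invariant quantities $\rho^4(\lambda^2+c_0^4)$ and $(\lambda^2+c_0^4)T$, which both appear bounded by $\hat c$ in the present setting.
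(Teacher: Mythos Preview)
Your proof is correct and follows essentially the same route as the paper's: reduce to $\rho=1$ via \Cref{lem:par-scal}, run a continuity argument with the integrated estimate of \Cref{cor:en-est} plus a covering bound $\scurv(t,1)\le N\scurv(t,\tfrac12)$, and rule out $T\le\hat c/(1+\lambda^2+c_0^4)$ by invoking \Cref{prop:ho-en-est} to extend. The only cosmetic difference is that the paper fixes the constants explicitly (writing $\Gamma$ for your $N$, $\omega=(6\widetilde C\Gamma)^{-1}$, $\overline\varepsilon=\varepsilon_0/(3\Gamma)$, and $\hat c=\min\{\omega,(2\Gamma)^{-1},1\}$) and, for the extension step, appeals to \cite[pp.~330--332]{kuwertschaetzle2002} to convert the uniform curvature bounds into smooth convergence of $f(t)$ to an immersion $f(T)$ before restarting; you might add that citation to make the final step airtight.
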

\begin{proof}
  After scaling as in \Cref{lem:par-scal}, we may suppose w.l.o.g. that $\rho=1$. Denoting by $\Gamma>1$ the minimal number of balls of radius $\frac12$ necessary to cover $B_1(0)\subseteq\R^3$, one clearly has
  \begin{equation}\label{eq:life-prop-1}
    \scurv(t,1)\leq \Gamma \cdot\scurv(t,\frac{1}{2}).
  \end{equation}
  Then choose $\overline{\varepsilon}=\frac{\varepsilon_0}{3\Gamma}$. Note that $\scurv(t)=\scurv(t,1)$ is a continuous function with $\scurv(0)\leq\varepsilon<\overline{\varepsilon}$. Therefore, for the universal constant $\omega=(6\widetilde{C}\Gamma)^{-1}$ where $\widetilde{C}$ is the universal constant ``$\,C\,$'' from \Cref{cor:en-est},
  \begin{equation}
    t_0\vcentcolon= \sup\{t\in[0,\min\{T,\frac{\omega}{1+c_0^4+\lambda^2}\}):\scurv\leq3\Gamma\varepsilon\text{ on $[0,t)$}\} > 0.
  \end{equation}
  By the choice of $\overline{\varepsilon}$, we have $\scurv(t)\leq 3\Gamma\varepsilon < \varepsilon_0$ for $t\in[0,t_0)$. Thus, by \Cref{cor:en-est},
  \begin{align}
    \int_{B_{\frac12}(x)}|A|^2\dd\mu\leq \int_{B_1(x)}|A|^2\dd\mu\Big|_{t=0} + 3 \widetilde{C} (1+c_0^4+\lambda^2)\Gamma\varepsilon t,
  \end{align} 
  for all $0\leq t<t_0$. Then
  \begin{equation}
    \int_{B_{\frac12}(x)}|A|^2\dd\mu\leq \int_{B_1(x)}|A|^2\dd\mu\Big|_{t=0} + \frac{\varepsilon}{2}\frac{1+c_0^4+\lambda^2}{\omega}t \leq 2\varepsilon
  \end{equation}
  for all $0\leq t<t_0$. Using \eqref{eq:life-prop-1}, one finds $\scurv(t)\leq 2\Gamma\varepsilon$ for all $0\leq t<t_0$. By the definition of $t_0$ and continuity of $\scurv$, if $t_0<\min\{T,\frac{\omega}{1+c_0^4+\lambda^2}\}$, one also has $\scurv(t_0)=3\Gamma\varepsilon$, a contradiction!

  \textbf{Case 1: } $t_0=T<\omega/(1+c_0^4+\lambda^2)$. Using the specific choices for $t_0$ and $\overline{\varepsilon}$ made above, one again obtains $\scurv(t)\leq 3\Gamma\varepsilon<\varepsilon_0$ for $0\leq t<T$. So \Cref{prop:ho-en-est} yields for $0<\zeta<T$
  \begin{align}
    \|\nabla^m A\|_{L^{\infty}}  &\leq C(m,T,(\lambda^2+c_0^4)T,\zeta)\quad\text{and}\\
    \|\nabla^m A\|_{L^{2}(\dd\mu)}  &\leq C(m,T,(\lambda^2+c_0^4)T,\zeta,\W(f_0),g)
  \end{align}
  for $t\in[\zeta,T)$, respectively, especially using \eqref{eq:int-asq} in the second estimate. With the same arguments as in \cite[pp. 330 -- 332]{kuwertschaetzle2002}, one deduces that $f(t)$ converges smoothly to an immersion $f(T)$ as $t\nearrow T$. Thus, one can restart the flow at time $T$ which contradicts the maximality of $T$, using \Cref{rem:well-posedness}. Therefore, the only possible case left is
  
  \textbf{Case 2: }$t_0=\omega/(1+c_0^4+\lambda^2)\leq T$. Choosing $\hat{c}=\min\{\omega,(2\Gamma)^{-1},1\}>0$, the claim is proved.
\end{proof}

\begin{remark}
  In \cite[Theorem~3.1]{mccoywheeler2016}, a life-span result for flows of a general structure which also applies for \eqref{eq:flow-eq} is proved. However, as used in the proof of their result on \cite[p. 864]{mccoywheeler2016}, the flows to which \cite[Theorem~3.1]{mccoywheeler2016} applies are invariant with respect to parabolic scaling. As this does not apply to \eqref{eq:flow-eq}, cf. \Cref{lem:par-scal}, we provide all arguments for \Cref{prop:life} above in order to precisely understand how the parameters $c_0$ and $\lambda$ come into play.
\end{remark}

\subsection{Properties of a blow-up sequence and concentration limit}

First, consider $c_0,\lambda\in\R$ and a maximal $(c_0,\lambda)$-Helfrich flow with initial datum $f_0$. For sequences of times $(t_j)_{j\in\N}$ with $t_j\nearrow T$, radii $(r_j)_{j\in\N}\subseteq(0,\infty)$ and points $(x_j)_{j\in\N}\subseteq\R^3$, define the rescaled flows
\begin{equation}
  f_j\colon\big[0,\frac{1}{r_j^4}(T-t_j)\big)\times\Sigma\to\R^3,\quad f_j(t,p)=\frac{f(t_j+r_j^4t,p)-x_j}{r_j}\,.
\end{equation}

\begin{lemma}\label{lem:ex-blow-up}
  Suppose that $\W(f(t))\leq M$ and $\A(f(t))\leq A$ for all $0\leq t<T$. With $\hat{c}$ and $\overline{\varepsilon}$ from \Cref{prop:life}, one can choose the sequences $(t_j)_{j\in\N}$, $(r_j)_{j\in\N}$ and $(x_j)_{j\in\N}$ such that the following properties hold. For the radii, there exists $r_{\max}=r_{\max}(M,A)$ with $0<r_j\leq r_{\max}$ for all $j\in\N$. Furthermore,
  \begin{enumerate}[(i)]
    \item $t_j+\hat{c}\frac{r_j^4}{1+r_j^4(\lambda^2+c_0^4)} < T$;
    \item $\scurv_j(t,1)\leq \overline{\varepsilon}$ for all $0\leq t\leq \hat{c}\frac{1}{1+r_j^4(\lambda^2+c_0^4)}$;
    \item $\inf_{j\in\N} \int_{B_1(0)} |A_{f_j(\tilde{c})}|^2\dd\mu_{f_j(\tilde{c})}>0$ where $0<\tilde{c}\vcentcolon=\hat{c}\frac{1}{1+r_{\max}^4(\lambda^2+c_0^4)} \leq \hat{c}\frac{1}{1+r_{j}^4(\lambda^2+c_0^4)}$.
  \end{enumerate}
\end{lemma}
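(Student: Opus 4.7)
First I would combine the hypotheses $\W(f(t))\leq M$ and $\A(f(t))\leq A$ with Topping's diameter inequality (as already used in the proof of \Cref{lem:max-ex-time}) to obtain $\diam(f(t)(\Sigma))\leq r_{\max}=r_{\max}(M,A)$ uniformly in $t\in[0,T)$. Together with $\int_{\Sigma}|A|^2\dd\mu\geq 8\pi>\overline{\varepsilon}$, which follows from \eqref{eq:int-asq} and $\W\geq 4\pi$, this forces $\scurv(t,r)\geq 8\pi$ for all $r\geq r_{\max}$, so any scale at which $\scurv(t,\cdot)$ remains below $\overline{\varepsilon}$ is automatically bounded above by $r_{\max}$.

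\textbf{Stage 2: Choice of $t_j, r_j$ yielding (i) and (ii).} Setting $\eta_1\vcentcolon=\hat c\overline{\varepsilon}/2$ and fixing any sequence $t_j\nearrow T$, I would let $R_j\vcentcolon=\sup\{r>0:\scurv(t_j,r)\leq\eta_1\}\in(0,r_{\max}]$ and $r_j\vcentcolon=3R_j/4$. By monotonicity of $\scurv(t_j,\cdot)$ one then has $\scurv(t_j,r_j)\leq\eta_1<\overline{\varepsilon}$ while $\scurv(t_j,2r_j)>\eta_1$. Applying \Cref{prop:life} at time $t_j$ with $\rho=r_j$ and $\varepsilon=\eta_1$ immediately yields (i) (the strict inequality is built into \Cref{prop:life}) and $\scurv(t_j+s,r_j)\leq\hat c^{-1}\eta_1=\overline{\varepsilon}/2\leq\overline{\varepsilon}$ for $s\in[0,\hat c r_j^4/(1+r_j^4(\lambda^2+c_0^4))]$; via the identity $\scurv_j(t,1)=\scurv(t_j+r_j^4 t,r_j)$ this translates exactly to (ii).

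\textbf{Stage 3: Choice of $x_j$, verification of (iii), and the main obstacle.} From $\scurv(t_j,2r_j)>\eta_1$ I would extract some $y_j\in\R^3$ with $\int_{B_{2r_j}(y_j)}|A_{f(t_j)}|^2\dd\mu>\eta_1$, cover $B_{2r_j}(y_j)$ by a bounded number $K$ of balls of radius $r_j$ and pick $x_j$ as the center of the ball carrying the largest curvature share, so that $\int_{B_{r_j}(x_j)}|A_{f(t_j)}|^2\dd\mu\geq\eta_1/K$; scaling, this gives $\int_{B_1(0)}|A_{f_j(0)}|^2\dd\mu_{f_j(0)}\geq\eta_1/K$ in the rescaled flow. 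The hard part is to propagate this lower bound from rescaled time $0$ to time $\tilde c$. The crucial observation, as emphasized in \Cref{rem:dep-c0-lam}, is that $(\lambda_j^2+c_{0,j}^4)\tilde c\leq\hat c$ is universal, so that \Cref{prop:ho-en-est} and \Cref{prop:en-est} apply to $f_j$ with constants independent of $j$. Using the resulting uniform $L^{\infty}$-bounds on $\nabla^m A_{f_j}$ together with the evolution equations in \Cref{prop:ev-eq} should bound the variation of $\int_{B_1(0)}|A_{f_j(t)}|^2\dd\mu$ over $[0,\tilde c]$ by a $j$-independent constant. The principal obstacle is that \Cref{prop:en-est} naturally controls only the growth of this quantity; a matching bound on its decrease (so that the initial lower bound $\eta_1/K$ is not wiped out over the time interval $[0,\tilde c]$) must be extracted from the $j$-uniform higher-order estimates of \Cref{prop:ho-en-est}, possibly combined with a smooth subsequential convergence argument for the rescaled flows on compact subsets of $(0,\tilde c]$ to rule out escape of the curvature concentration out of $B_1(0)$. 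Executing this propagation carefully then produces the uniform positive lower bound required in (iii).
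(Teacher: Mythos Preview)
Your Stages~1 and~2 are essentially fine and do yield (i) and (ii); the paper obtains these via a slightly different radius selection (it quotes an existence result for radii $r_t$ with $\alpha<\scurv(t,r_t)<\hat c\,\overline{\varepsilon}$ rather than your sup construction), but the content is the same.

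The real gap is Stage~3. You pick $x_j$ at rescaled time $0$ and then try to \emph{propagate} the lower bound on $\int_{B_1(0)}|A_{f_j}|^2\dd\mu$ forward to time $\tilde c$. None of the tools you list actually does this. The $L^\infty$ bounds of \Cref{prop:ho-en-est} degenerate like $t^{-(m+1)/4}$ as $t\to 0$ and so cannot be integrated from $t=0$; \Cref{prop:en-est} and \Cref{cor:en-est} control only the \emph{growth} of the localized integral, as you yourself note; and appealing to ``smooth subsequential convergence of the rescaled flows on $(0,\tilde c]$'' to rule out escape of the concentration is circular, since that compactness argument (carried out later in \Cref{prop:conc-limit}) uses (iii) to guarantee the limit is nontrivial near the origin. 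Nothing in your setup prevents the concentration scale from \emph{increasing} over the interval $[t_j,t_j+r_j^4\tilde c]$, so that no single ball of radius $r_j$ at the later time carries a definite amount of curvature.

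The paper avoids this propagation entirely by giving up the freedom you claim in Stage~2 of starting from an arbitrary sequence $t_j\nearrow T$. With $r_t$ satisfying $\alpha<\scurv(t,r_t)<\hat c\,\overline{\varepsilon}$ and $r_t\leq r_{\max}$, a doubling argument shows $\liminf_{t\nearrow T} r_{t+r_t^4\tilde c}/r_t<2$: otherwise iterating $s_{j+1}=s_j+r_{s_j}^4\tilde c$ would force $r_{s_j}\to\infty$, contradicting the diameter bound. One then \emph{selects} $t_j$ along which $r_{t_j+r_j^4\tilde c}\leq M r_j$ for some $M<2$, so that by a covering argument $\scurv(t_j+r_j^4\tilde c,\,r_j)\geq\alpha/N(M)$. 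Now $x_j$ is chosen to (nearly) realize this supremum \emph{at the later time} $t_j+r_j^4\tilde c$, which gives (iii) directly with no forward-in-time propagation needed.
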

\begin{proof}
  With the arguments in \cite[Lemma~6.6 in Article~C]{rupp2022PhD}, there are $\alpha>0$ and $(r_t)_{t\in[0,T)}\subseteq(0,\infty)$ with
  \begin{equation}\label{eq:ex-bu-1}
    \alpha < \scurv(t,r_t) < \hat{c} \overline{\varepsilon} \quad\text{for all $0\leq t<T$}.
  \end{equation}
  \begin{claim}\label{cl:bu-1}
    One has 
    \begin{equation}\label{eq:cl-bu-1}
      r_t \leq\frac12\mathrm{diam}(f(t,\Sigma))\leq \frac{1}{\pi} \sqrt{\W(f(t))}\sqrt{\A(f(t))} \leq r_{\max}(M,A).
    \end{equation}
  \end{claim}
  Note that, by Simons's diameter estimate, cf. \cite[Lemma~1.1]{topping1998}, one only needs to show the first inequality in the above. To this end, if $r_t>\frac12\mathrm{diam}(f(t,\Sigma))$, there is $0<r<r_t$ and a ball $B_r(x)\subseteq\R^3$ with $f(t,\Sigma)\subseteq B_r(x)$. Using $\|A\|^2_{L^2(\dd\mu)}\geq 8\pi$, one finds
  \begin{equation}
    8\pi \leq \int_{B_r(x)}|A|^2\dd\mu\Big|_{t} \leq \scurv(t,r_t) \leq \hat{c}\overline{\varepsilon},
  \end{equation}
  a contradiction to the choice of $\overline{\varepsilon}$. So \Cref{cl:bu-1} is established.
  
  We now argue 
  \begin{equation}
    \liminf_{t\nearrow T}\frac{r_{t+r_{t}^4\tilde{c}}}{r_t} < 2.
  \end{equation}
  Otherwise, there exists $t_0\in (0,T)$ with $r_{t+r_t^4\tilde{c}}\geq 2r_t$ for all $t_0\leq t<T$. Define $s_1=t_0$ and, for $j\in\N$, $s_{j+1}=s_{j}+r_{s_j}^4\tilde{c}$. Then $r_{s_{j+1}}\geq 2^j r_{s_1}$ which contradicts \Cref{cl:bu-1}. Therefore, we can choose a sequence $t_j\nearrow T$ such that, for some $0<M<2$, writing $r_j=r_{t_j}$,
  \begin{equation}
    r_{t_j+r_j^4\tilde{c}} \leq M r_j\quad\text{for all $j\in\N$}.
  \end{equation}
  By a simple covering argument, there exists $N=N(M)>0$ such that
  \begin{equation}
    \scurv(t,M r) \leq N \scurv(t,r)\quad\text{for all $t\in[0,T)$ and $r>0$}.
  \end{equation}
  Therefore, one finds
  \begin{align}
    \scurv(t_j+r_j^4\tilde{c},r_j) \geq \frac{1}{N} \scurv(t_j+r_j^4\tilde{c},Mr_j) \geq \frac{1}{N} \scurv(t_j+r_j^4\tilde{c},r_{t_j+r_j^4\tilde{c}}) \geq \frac{\alpha}{N}.
  \end{align}
  Whence, choosing $x_j\in\R^3$ with 
  \begin{equation}
    \int_{B_{r_j}(x_j)} |A|^2\dd\mu \Big|_{t=t_j+r_j^4\tilde{c}}\geq \frac{\alpha}{2N},
  \end{equation}
  \eqref{eq:ex-bu-1} and \Cref{prop:life} yield (i), (ii) and (iii).
\end{proof}

The following proposition shows the existence and contains relevant properties of a so-called ``concentration limit'' obtained by this blow-up procedure.

\begin{proposition}\label{prop:conc-limit}
  Let $c_0>0$ and $\lambda>0$ and consider a maximal $(c_0,\lambda)$-Helfrich flow. Then the construction of \Cref{lem:ex-blow-up} applies and there exists a complete, orientable surface $\hat{\Sigma}\neq\emptyset$ without boundary and a proper immersion $\hat{f}\colon\hat{\Sigma}\to\R^3$ such that, after passing to a subsequence, $r_j\to r\in[0,\infty)$ and,
  \begin{enumerate}[(i)]
    \item as $j\to\infty$, $\hat{f}_j\vcentcolon= f_j(\tilde{c})\to\hat{f}$ smoothly on compact subsets of $\R^3$, after reparametrization;
    \item $\int_{B_1(0)}|\hat{A}|^2\dd\hat{\mu}>0$ and $\H_{rc_0,r^2\lambda}(\hat{f})\leq\lim_{j\to\infty} \H_{r_jc_0,r_j^2\lambda}(\hat{f}_j)=\lim_{t\nearrow T}\H_{c_0,\lambda}(f(t))$;
    \item $\hat{f}$ is an $(rc_0,r^2\lambda$)-Helfrich immersion and
    \item if $\A(\hat{f}_j)\to\infty$, then $r=0$ and $\hat{f}$ is a Willmore immersion. 
  \end{enumerate}
\end{proposition}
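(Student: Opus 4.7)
The four statements are established by a standard blow-up compactness argument adapted to the scaling-dependent parameters of the Helfrich flow.

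\textbf{Step 1 (Applying the blow-up construction).} For $c_0, \lambda > 0$, the energy decay in \Cref{rem:en-dec} gives $\A(f(t)) \leq \frac{2}{\lambda}\H_{c_0,\lambda}(f_0)$, and \eqref{eq:est-Will-energy} together with \Cref{rem:en-dec} provides a uniform bound $\W(f(t)) \leq \frac{2\lambda+c_0^2}{2\lambda}\H_{c_0,\lambda}(f_0)$. Hence the hypotheses of \Cref{lem:ex-blow-up} are met, producing sequences $t_j \nearrow T$, $r_j \in (0, r_{\max}]$ and $x_j \in \R^3$ with properties (i)--(iii) there. Pass to a subsequence so that $r_j \to r \in [0, r_{\max}]$.

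\textbf{Step 2 (Uniform smooth estimates on the rescaled flows).} By \Cref{lem:par-scal}, each $f_j$ is an $(r_j c_0, r_j^2 \lambda)$-Helfrich flow. Since $r_j \leq r_{\max}$, the quantity $((r_j^2\lambda)^2+(r_j c_0)^4)\tilde c$ is uniformly bounded, so \eqref{eq:dep-c0-lam} holds with a uniform $\overline L$. Combined with \Cref{lem:ex-blow-up}(ii), \Cref{prop:ho-en-est} yields, for every $m \in \N_0$, uniform (in $j$) local $L^\infty$ bounds on $\nabla^m A_{f_j(s)}$ for $s$ in a small interval around $\tilde{c}$, together with the full $L^2(\dd\mu)$-bounds via the rescaling identity $\int_{\Sigma}|A_{\hat f_j}|^2 \dd\mu_{\hat f_j} = \int_{\Sigma}|A_{f(t_j+r_j^4\tilde c)}|^2 \dd\mu_{f(t_j+r_j^4\tilde c)}$ and \eqref{eq:int-asq} applied to the bounded-Willmore-energy flow.

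\textbf{Step 3 (Extraction of the limit).} With these uniform geometric bounds, apply the standard compactness procedure for immersions of locally bounded geometry (as in the arguments following \cite[Theorem~3.5]{kuwertschaetzle2001} or \cite[Section~4]{rupp2024}): parametrize $\hat f_j$ locally as graphs over tangent planes with uniform $C^{k}$ bounds in balls $B_R(0) \subset \R^3$, and diagonalize as $R \to \infty$. This yields a complete, orientable surface $\hat\Sigma$ without boundary and a proper immersion $\hat f\colon \hat\Sigma \to \R^3$ with $\hat f_j \to \hat f$ smoothly on compact subsets of $\R^3$, after reparametrization, establishing (i). Property (ii) follows from smooth convergence on $B_1(0)$ and \Cref{lem:ex-blow-up}(iii), while the energy inequality uses the scaling identity $\H_{r_j c_0, r_j^2 \lambda}(\hat f_j) = \H_{c_0,\lambda}(f(t_j + r_j^4 \tilde c))$ combined with Fatou's lemma on each $B_R(0)$ and the monotone limit from \Cref{rem:en-dec}.

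\textbf{Step 4 (Euler--Lagrange equation and area dichotomy).} Since $\int_0^T \int_{\Sigma} |\partial_\tau f|^2 \dd\mu \dd\tau = 2\big(\H_{c_0,\lambda}(f_0) - \lim_{t\nearrow T}\H_{c_0,\lambda}(f(t))\big) < \infty$ and $t_j \to T$, changing variables via $\tau = t_j + r_j^4 t$ gives
\begin{equation}
    \int_0^{\tilde c} \int_{\Sigma} |\partial_t f_j|^2 \dd\mu_{f_j} \dd t = \int_{t_j}^{t_j + r_j^4 \tilde c} \int_{\Sigma} |\partial_\tau f|^2 \dd\mu \dd\tau \longrightarrow 0.
\end{equation}
Using the smooth local convergence of the flows $f_j$ on a time interval around $\tilde c$ (Step 2 extended slightly beyond $\tilde c$), the uniform smoothness upgrades $L^2$-vanishing to locally uniform vanishing of $\partial_t f_j$, so $\partial_t \hat f = 0$ along the limiting flow. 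By the flow equation \eqref{eq:flow-eq} with parameters $(r_j c_0, r_j^2 \lambda) \to (rc_0, r^2\lambda)$, the limiting immersion $\hat f$ satisfies the $(rc_0, r^2\lambda)$-Helfrich Euler--Lagrange equation, proving (iii). For (iv), the rescaling identity $\A(\hat f_j) = r_j^{-2} \A(f(t_j + r_j^4 \tilde c))$ together with the uniform bound $\A(f(t)) \leq \frac{2}{\lambda}\H_{c_0,\lambda}(f_0)$ from Step 1 forces $r_j \to 0$ whenever $\A(\hat f_j) \to \infty$; the limiting parameters then degenerate to $(0,0)$ and (iii) reduces to $\hat f$ being a Willmore immersion.

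\textbf{Main obstacle.} The delicate point is Step 3: producing a well-defined proper limit $\hat f$ despite the fact that large portions of the rescaled surfaces may escape to spatial infinity when $\A(\hat f_j) \to \infty$ (and $r_j \to 0$). The diameter bound built into \Cref{cl:bu-1} of \Cref{lem:ex-blow-up} keeps the concentration point $x_j$ anchored near $f(t_j, \Sigma)$, but one still needs the full local $C^k$-estimates of \Cref{prop:ho-en-est} (with constants that depend on $c_0, \lambda, r_j$ only through the combination made scale-invariant by \eqref{eq:dep-c0-lam}, as emphasized in \Cref{rem:dep-c0-lam}) to prevent degeneration of the limit away from the concentration site.
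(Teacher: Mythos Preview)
The proposal is correct and follows essentially the same approach as the paper: bound $\A$ and $\W$ uniformly along the flow to invoke \Cref{lem:ex-blow-up}, obtain the scale-invariant curvature estimates from \Cref{prop:ho-en-est}, extract the limit via Langer-type compactness, and show the limiting flow is stationary by using finiteness of $\int_0^T\|\partial_t f\|_{L^2}^2\dd t$. One minor omission is the explicit local area bound $R^{-2}\mu_j(B_R(0))\leq C$ from Simon's monotonicity formula (cf.\ \cite[Lemma~4.1]{kuwertschaetzle2001}), which the paper invokes as an input to the compactness theorem, and the paper also records that the convergence $\nu_j\circ\phi_j\to\widetilde{\nu}$ is what yields orientability of $\hat{\Sigma}$; both are implicit in your ``standard compactness procedure'' reference.
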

\begin{proof}
  Write $K=\H_{c_0}(f_0)+\frac12\lambda\A(f_0)$. Using \eqref{eq:en-dec}, one finds $\A(f(t))\leq 2K/\lambda$ for all $t\in[0,T)$. Moreover, by \eqref{eq:est-Will-energy} and \eqref{eq:en-dec}, also
  \begin{equation}\label{eq:conc-limit-0}
    \W(f(t))\leq \frac{2\lambda+c_0^2}{2\lambda} K
  \end{equation}
  is uniformly bounded in $t\in[0,T)$. Particularly, \Cref{lem:ex-blow-up} applies. After passing to a subsequence without relabeling, one may suppose $r_j\to r\in[0,\infty)$. By \Cref{lem:par-scal}, each $f_j$ is an $(r_jc_0,r_j^2\lambda)$-Helfrich flow which, by \Cref{lem:ex-blow-up}, exists on $[0,\widetilde{c}]\times\Sigma$, uniformly in $j\in\N$. Moreover, 
  \begin{equation}
    r_j^4(c_0^4+\lambda^2) \cdot \widetilde{c} \leq r_j^4(c_0^4+\lambda^2) \frac{\hat{c}}{1+r_j^4(c_0^4+\lambda^2)} \leq\hat{c}.
  \end{equation}
  Thus, by \Cref{prop:ho-en-est}, using \eqref{eq:conc-limit-0} and \eqref{eq:int-asq}, we have for $0<t\leq\widetilde{c}$
  \begin{align}
    \|\nabla^mA_j\|_{\infty}&\leq C(m,\hat{c})t^{-\frac{m+1}{4}},\\
    \|\nabla^mA_j\|_{L^2(\dd\mu_j)} &\leq C(m,\hat{c},\frac{2\lambda+c_0^2}{2\lambda}K,g)t^{-\frac{m}{4}}.\label{eq:conc-limit-1}
  \end{align}
  Further, from Simon's monotonicity formula (cf. \cite[Lemma~4.1]{kuwertschaetzle2001}) and \eqref{eq:conc-limit-0}, for any $R>0$, we find
  \begin{equation}
    R^{-2}\mu_j(B_R(0))\leq CK<\infty\quad\text{for all $j\in\N$}.
  \end{equation} 
  Altogether, by the localized version of Langer's compactness theorem (cf. \cite[Theorem 4.2]{kuwertschaetzle2001} and \cite[Appendix~A]{rupp2023}) applied to the sequence $\hat{f_j}=f_j(\widetilde{c})$, after passing to a subsequence, we have (i) with $\hat{f}$ as claimed. That is, there exists a complete surface $\hat{\Sigma}$ without boundary, a proper immersion $\hat{f}\colon\hat{\Sigma}\to\R^3$, diffeomorphisms $\phi_j\colon\hat{\Sigma}(j)\to U_j$ where $U_j\subseteq\Sigma_j$ are open sets and $\hat{\Sigma}(j)=\{p\in\hat{\Sigma}:|\hat{f}(p)|<j\}$, and $u_j\in C^{\infty}(\hat{\Sigma}(j),\R^3)$ such that
  \begin{equation}
    \hat{f}_j\circ\phi_j=\hat{f}+u_j\quad\text{in $\hat{\Sigma}(j)$}
  \end{equation}
  and $\|\hat{\nabla}^mu_j\|_{L^{\infty}(\hat{\Sigma}(j))}\to 0$ as $j\to\infty$, for all $m\in\N_0$. Now the smooth convergence on compact sets in $\hat{\Sigma}$ and (iii) in \Cref{lem:ex-blow-up} yield the first statement in (ii) and particularly that $\hat{\Sigma}\neq\emptyset$.

  Then consider the flows $\widetilde{f}_j\colon(0,\widetilde{c}]\times\hat{\Sigma}(j)\to\R^3$, $\widetilde{f}_j=f_j\circ\phi_j$. Note that these flows also satisfy the $L^{\infty}$ estimates in \eqref{eq:conc-limit-1}. Moreover,
  \begin{equation}\label{eq:conc-limit-2}
    \partial_t\widetilde{f}_j = -\bigl[\Delta\widetilde{H}_j+|\widetilde{A}^0_j|^2\widetilde{H}_j-r_jc_0\bigl(|\widetilde{A}^0_j|^2-\frac12 \widetilde{H}_j^2\bigr)-r_j^2(\lambda+\frac12c_0^2)\widetilde{H}_j\bigr](\nu_j\circ\phi_j).
  \end{equation}
  Fix $0<\zeta<\widetilde{c}$. As in \cite[pp. 331 -- 332]{kuwertschaetzle2002}, the $L^{\infty}$-estimates in \eqref{eq:conc-limit-1} combined with the evolution equation \eqref{eq:conc-limit-2}, the convergence of $\widetilde{f}_j(\tilde{c})=\hat{f}+u_j$ for $j\to\infty$ and $r<\infty$ yield in any local chart $(U,\psi)$ of $\hat{\Sigma}$
  \begin{equation}
    \|\partial^m \widetilde{f}_j\|_{L^{\infty}([\zeta,\widetilde{c}]\times U)},\ \|\partial^m\partial_t\widetilde{f}_j\|_{L^{\infty}([\zeta,\widetilde{c}]\times U)} \leq C(\zeta,m,\hat{c},r)
  \end{equation}
  for all $m\in\N_0$ where $\partial^m$ denotes the coordinate derivative in the local chart $(U,\psi)$. Arguing as in \cite[Article~C, pp. 25 -- 26]{rupp2022PhD}, one concludes that, after passing to a further subsequence, there exists a family of immersions $\widetilde{f}\colon[\zeta,\widetilde{c}]\times\hat{\Sigma}\to\R^3$ such that, for any compact $P\subseteq\hat{\Sigma}$, $\widetilde{f}_j\to\widetilde{f}$ in $C^1([\zeta,\widetilde{c}],C^m(P))$, for any $m\in\N$. Particularly, $\nu_j\circ\phi_j\to\widetilde{\nu}$, a smooth, globally defined normal vector field on $\hat{\Sigma}$ and thus, $\hat{\Sigma}$ is orientable. Moreover, 
  \begin{equation}
    \partial_t\widetilde{f} =  -\bigl[\Delta\widetilde{H}+|\widetilde{A}^0|^2\widetilde{H}-rc_0\bigl(|\widetilde{A}^0|^2-\frac12 \widetilde{H}^2\bigr)-r^2(\lambda+\frac12c_0^2) \widetilde{H}\bigr]\widetilde{\nu}\quad\text{on $[\zeta,\widetilde{c}]\times\hat{\Sigma}$}.
  \end{equation}
  Furthermore, for $P\subseteq\hat{\Sigma}$ compact, using that due to \Cref{rem:en-dec}, the limit $\lim_{t\nearrow T}\H_{c_0,\lambda}(f(t))$ exists due to monotonicity,
  \begin{align}
    \int_{\zeta}^{\tilde{c}} \int_P |\partial_t\widetilde{f}|^2\dd\widetilde{\mu}\dd t &= \lim_{j\to\infty} \int_{\zeta}^{\tilde{c}} \int_P |\partial_t\widetilde{f}_j|^2\dd\widetilde{\mu}_j\dd t \leq \liminf_{j\to\infty} \int_{\zeta}^{\tilde{c}} \int_{\Sigma} |\partial_tf_j|^2\dd\mu_j\dd t\\
    &=2\liminf_{j\to\infty} \bigl( \H_{c_0,\lambda}(f(t_j+r_j^4\zeta))-\H_{c_0,\lambda}(f(t_j+r_j^4\tilde{c}))\bigr) = 0.
  \end{align}
  So $\hat{f}=\widetilde{f}(\tilde{c})$ is an $(rc_0,r^2\lambda)$-Helfrich immersion. Moreover, using \Cref{rem:en-dec}, for $P\subseteq\hat{\Sigma}$ compact,
  \begin{align}
    \frac14\int_{P}(\widetilde{H}-rc_0)^2&\dd\widetilde{\mu} + r^2\frac12\lambda\widetilde{\mu}(P) \Big|_{t=\tilde{c}} = \lim_{j\to\infty} \frac14 \int_{P}(\widetilde{H}_j-r_jc_0)^2\dd\widetilde{\mu}_j + r_j^2\frac12\lambda\widetilde{\mu}_j(P)\Big|_{t=\tilde{c}}\\
    &\leq\liminf_{j\to\infty}\H_{r_jc_0,r_j^2\lambda}(\widetilde{f}_j(\tilde{c})) \leq \liminf_{j\to\infty} \H_{r_jc_0,r_j^2\lambda}(f_j(\tilde{c})) \\
    &= \liminf_{j\to\infty} \H_{c_0,\lambda}(f(t_j+r_j^4\tilde{c})) = \lim_{t\nearrow T} \H_{c_0,\lambda}(f(t)).
  \end{align}
  Since this holds for any $P\subseteq\hat{\Sigma}$ compact, one finds $\H_{rc_0,r^2\lambda}(\hat{f}) \leq \lim_{j\to\infty}\H_{r_jc_0,r_j^2\lambda}(\hat{f}_j)=\lim_{t\nearrow T} \H_{c_0,\lambda}(f(t))$.

  By part (iii), for statement (iv), one only needs to check that $\A(\hat{f}_j)\to\infty$ implies $r=0$. This however is an immediate consequence of 
  \begin{equation}
    \A(\hat{f}_j) = \frac{1}{r_j^2} \A(f(t_j+\hat{t}_jr_j^4)) \leq \frac{2K}{\lambda}\frac{1}{r_j^2},
  \end{equation}  
  using the global area-bound for the flow established at the beginning of the proof.
\end{proof}

Throughout this article, the following lemma is used repeatedly. It summarizes the arguments originally employed in the convergence proof of the Willmore flow in \cite[Section~5]{kuwertschaetzle2004} and excludes non-compact concentration limits for Helfrich flows which are topologically spheres if the Willmore energy remains uniformly below $8\pi$.

\begin{lemma}\label{lem:kusch-rem-pt-sing}
  Consider a sequence of spherical immersions $\hat{f}_j\colon\S^2\to\R^3$ and a proper Willmore immersion $\hat{f}\colon\hat{\Sigma}\to\R^3$ of a complete, orientable surface $\hat{\Sigma}\neq\emptyset$ without boundary satisfying
  \begin{equation}\label{eq:no-plane-condition}
    \int_{B_1(0)}|\hat{A}|^2\dd\hat{\mu} >0.
  \end{equation}
  Suppose that $\hat{f}_j\to\hat{f}$ on compact subsets of $\R^3$, i.e. for $\hat{\Sigma}(j)=\{p\in\hat{\Sigma}:|\hat{f}(p)|<j\}$ there exist $u_j\in C^{\infty}(\hat{\Sigma}(j),\R^3)$ and diffeomorphisms $\phi_j\colon\hat{\Sigma}(j)\to\phi_j(\hat{\Sigma}(j))\subseteq\S^2$ such that 
  \begin{equation}\label{eq:kusch-rem-pt-sing-1}
    \hat{f}_j \circ\phi_j=\hat{f}+u_j\quad\text{in $\hat{\Sigma}(j)$}
  \end{equation}
  and $\|\hat{\nabla}^mu_j\|_{L^{\infty}(\hat{\Sigma}(j))}\to 0$ as $j\to\infty$, for all $m\in\N_0$.

  If there exists $\beta>0$ with $\W(\hat{f}_j)\leq 8\pi-\beta$ for all $j\in\N$, then $\hat{\Sigma}$ is diffeomorphic to $\S^2$. Writing $\hat{\Sigma}=\S^2$, there exist diffeomorphisms $\Phi_j\colon\S^2\to\S^2$ such that $\hat{f}_j\circ\Phi_j\to\hat{f}$ smoothly on $\S^2$ and $\hat{f}$ parametrizes a round sphere.
\end{lemma}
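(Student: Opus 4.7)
The plan is to deduce the statement from the Kuwert--Sch\"atzle classification of proper, low-energy Willmore immersions of surfaces of genus zero, and then upgrade the $C^\infty_{\mathrm{loc}}$ convergence on $\R^3$ to smooth convergence on all of $\S^2$.

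First, I would establish the Willmore energy bound on the limit by lower semicontinuity. Since $\hat{f}_j\circ\phi_j\to\hat{f}$ smoothly on compact subsets of $\hat{\Sigma}$, for any compact $P\subseteq\hat{\Sigma}$,
\begin{equation}
\int_P (H_{\hat{f}})^2\dd\hat{\mu} = \lim_{j\to\infty} \int_{\phi_j(P)} (H_{\hat{f}_j})^2\dd\mu_{\hat{f}_j} \leq 4\liminf_{j\to\infty}\W(\hat{f}_j) \leq 4(8\pi-\beta).
\end{equation}
Taking a compact exhaustion yields $\W(\hat{f})\leq 8\pi-\beta$. Moreover, since each $\hat{\Sigma}(j)$ is diffeomorphic (via $\phi_j$) to an open subset of $\S^2$, every compact subsurface of $\hat{\Sigma}$ is planar; thus $\hat{\Sigma}$ is orientable of genus zero.

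The main step is to invoke the rigidity result of Kuwert--Sch\"atzle \cite[Section~5]{kuwertschaetzle2004}: a proper Willmore immersion of a complete, connected, orientable surface without boundary, of genus zero and with Willmore energy strictly less than $8\pi$, is (up to reparametrization) either a flat plane or a round sphere. The hypothesis \eqref{eq:no-plane-condition} rules out the plane, so $\hat{\Sigma}$ must be compact and $\hat{f}$ parametrizes a round sphere. In particular, $\hat{\Sigma}$ is diffeomorphic to $\S^2$.

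Finally, with $\hat{\Sigma}\cong\S^2$ compact, for $j$ large one has $\hat{\Sigma}(j)=\hat{\Sigma}$, and $\phi_j\colon\hat{\Sigma}\to\phi_j(\hat{\Sigma})\subseteq\S^2$ is a diffeomorphism of the compact, connected surface $\hat{\Sigma}$ onto an open subset of $\S^2$. The image is both open and compact (hence closed) in the connected target $\S^2$, so $\phi_j(\hat{\Sigma})=\S^2$. Setting $\Phi_j\vcentcolon=\phi_j^{-1}$, the assertion $\hat{f}_j\circ\Phi_j\to\hat{f}$ in $C^\infty(\S^2,\R^3)$ follows directly from \eqref{eq:kusch-rem-pt-sing-1}.

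The principal obstacle is the classification invoked in the second step. It rests on the removable-singularities theorem for Willmore surfaces with locally small $\int|A|^2$-concentration, applied after inversion at a point in the complement of $\hat{f}(\hat{\Sigma})$ to compactify the ends of $\hat{\Sigma}$, together with the rigidity of low-energy Willmore spheres (via the Li--Yau inequality and the equality case of Willmore's inequality $\W\geq 4\pi$). Since this is carried out in detail in \cite{kuwertschaetzle2004}, I would invoke it as a black box rather than reproduce its proof here.
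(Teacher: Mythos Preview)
Your approach coincides with the paper's: both reduce to the inversion and removable-singularities argument of \cite[Section~5]{kuwertschaetzle2004} to classify the limit as a plane or a round sphere, then exclude the plane via \eqref{eq:no-plane-condition} and upgrade the convergence once $\hat{\Sigma}$ is known to be compact. The paper spells out the inversion step explicitly rather than treating it as a black box, but the content is the same.

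One correction to your description of what lies inside the black box: the rigidity of low-energy closed Willmore spheres requires Bryant's classification \cite{bryant1984}, which the paper invokes explicitly. The Li--Yau inequality together with the equality case of $\W\geq 4\pi$ only tells you that a closed surface with $\W=4\pi$ is a round sphere; to pass from $\W<8\pi$ to $\W=4\pi$ for a smooth Willmore sphere you need Bryant's quantization of the energy values. This does not affect the validity of your argument, since the result you cite as a black box is indeed available, but the mechanism you sketch for it is incomplete.
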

\begin{proof}[Proof (cf. {\cite[pp.~349 -- 350]{kuwertschaetzle2004}} and {\cite[proof of Thm.~1.2]{rupp2023}}).]
  For the sake of contradiction, suppose that $\hat{\Sigma}$ is not compact. Arguing as in \cite[Lemma~6.1]{rupp2024}, one finds $\A(\hat{f}_j)\to\infty$ for $j\to\infty$. Fix any $x_0\notin \hat{f}(\hat{\Sigma})$, denote by $I$ the inversion in a sphere with radius $1$ centered in $x_0$ and denote $\overline{\Sigma}=I(\hat{f}(\hat{\Sigma}))\cup\{0\}$. Using \eqref{eq:kusch-rem-pt-sing-1} as well as $\W(\hat{f}_j)\leq 8\pi-\beta$ and, by \eqref{eq:int-asq}, $\sup_{j\in\N}\int_{\S^2}|\hat{A}_j|^2\dd\hat{\mu}_j<\infty$, one finds the following. By \cite[Lemma 5.1]{kuwertschaetzle2004}, $\overline{\Sigma}$ is a compact smooth Willmore sphere with $\W(\overline{\Sigma}) \leq 8\pi-\beta$, and thus a round sphere by Bryant's classification in \cite{bryant1984}. Then $\hat{f}(\hat{\Sigma}) = I(\overline{\Sigma}\setminus\{0\})$ is necessarily a plane since it is unbounded, contradicting \eqref{eq:no-plane-condition}.

  Therefore, $\hat{\Sigma}$ is compact and thus, for $j$ sufficiently large, $\hat{\Sigma}(j)=\hat{\Sigma}$. As $\phi_j\colon\hat{\Sigma}(j)\to\phi_j(\hat{\Sigma})\subseteq\S^2$ is a diffeomorphism, arguing as in \cite[Lemma~4.3]{kuwertschaetzle2001}, we may w.l.o.g. write $\hat{\Sigma}=\S^2$ and the convergence in the statement is proved.
  
  Finally, the smooth convergence yields $\W(\hat{f})<8\pi$. Again, by \cite{bryant1984}, $\hat{f}$ necessarily parametrizes a round sphere.
\end{proof}

\begin{proposition}\label{prop:spherical-shrinker}
  Let $c_0<0$ and $\lambda\geq 0$, consider a maximal $(c_0,\lambda)$-Helfrich flow $f\colon[0,T)\times\S^2\to\R^3$ with $\int_{\S^2}H\dd\mu\geq 0$ on $[0,T)$ and $\W(f(t))\leq 8\pi-\beta$ for all $0\leq t<T$ and some $\beta>0$. Then $T<\infty$, the construction in \Cref{lem:ex-blow-up} applies and $r_j\to 0$.

  Moreover, there exist diffeomorphisms $\phi_j\colon\S^2\to\S^2$ such that, after passing to a subsequence,
  \begin{equation}\label{eq:conv-rd-sp}
    \frac{f(t_j+r_j^4\tilde{c})-x_j}{r_j} \circ\phi_j \to \hat{f} \quad\text{in the $C^{\infty}(\S^2,\R^3)$-topology}
  \end{equation}
  where $\hat{f}\colon\S^2\to\R^3$ parametrizes a round sphere. Especially, $\H_{c_0,\lambda}(f(t))\searrow 4\pi$ and $\A(f(t))\to 0$ for $t\nearrow T$.
\end{proposition}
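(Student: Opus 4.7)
The plan is to combine the scaling-based finiteness of $T$ from \Cref{lem:max-ex-time}, the blow-up construction of \Cref{lem:ex-blow-up,prop:conc-limit}, and the rigidity result \Cref{lem:kusch-rem-pt-sing}. The hypotheses of \Cref{lem:max-ex-time} ($c_0<0$, $\lambda\geq 0$, $\int_{\S^2}H\dd\mu\geq 0$, $\W(f(t))<8\pi$) are assumed, so $T<\infty$ is immediate. To prepare for the blow-up I would first establish a uniform area bound: expanding $\H_{c_0}(f)=\W(f)-\tfrac12 c_0\int_{\S^2}H\dd\mu + \tfrac14 c_0^2\A(f)$ and using $c_0<0$, $\int_{\S^2}H\dd\mu\geq 0$, $\W(f)\geq 0$, together with the energy decay in \Cref{rem:en-dec}, one gets
\begin{equation*}
  \tfrac14 c_0^2\,\A(f(t)) \leq \H_{c_0}(f(t)) \leq \H_{c_0,\lambda}(f(t)) \leq \H_{c_0,\lambda}(f_0),
\end{equation*}
so $\A(f(\cdot))$ is uniformly bounded and the construction of \Cref{lem:ex-blow-up} applies.

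The decisive step is to show $r_j\to 0$, which I would prove by contradiction via the life-span theorem. By the choice of $r_j=r_{t_j}$ in the proof of \Cref{lem:ex-blow-up} we have $\scurv(t_j,r_j)<\hat{c}\,\overline{\varepsilon}<\overline{\varepsilon}$, with $\hat{c},\overline{\varepsilon}$ from \Cref{prop:life}. If $r_j\geq\rho_0>0$ held along a subsequence, then $\scurv(t_j,\rho_0)\leq\scurv(t_j,r_j)<\overline{\varepsilon}$ and \Cref{prop:life} would extend the flow past $t_j+\hat{c}\rho_0^4/(1+\rho_0^4(\lambda^2+c_0^4))$, contradicting $t_j\nearrow T<\infty$. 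With $r_j\to 0$ in hand I would then mimic the proof of \Cref{prop:conc-limit} essentially verbatim, replacing the area bound used there (which relied on $\lambda>0$ via \eqref{eq:en-dec}) by the bound above, and the Willmore bound \eqref{eq:est-Will-energy} by the hypothesis $\W(f(t))\leq 8\pi-\beta$; all other ingredients (\Cref{lem:par-scal,prop:ho-en-est}, Simon's monotonicity formula, Langer's compactness) are sign-independent. This produces a complete orientable surface $\hat{\Sigma}$ and a proper immersion $\hat{f}\colon\hat{\Sigma}\to\R^3$ with $\hat{f}_j\to\hat{f}$ smoothly on compact subsets of $\R^3$, with $\int_{B_1(0)}|\hat{A}|^2\dd\hat{\mu}>0$, and --- since $rc_0=0=r^2\lambda$ --- such that $\hat{f}$ is a Willmore immersion.

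The global conclusion then follows from \Cref{lem:kusch-rem-pt-sing}: scale-invariance of the Willmore energy gives $\W(\hat{f}_j)=\W(f(t_j+r_j^4\tilde{c}))\leq 8\pi-\beta$, so the lemma produces diffeomorphisms $\phi_j\colon\S^2\to\S^2$ with $\hat{f}_j\circ\phi_j\to\hat{f}$ in $C^\infty(\S^2,\R^3)$ and $\hat{f}$ parametrizing a round sphere --- exactly \eqref{eq:conv-rd-sp}. The scaling identity $\H_{r_jc_0,r_j^2\lambda}(\hat{f}_j)=\H_{c_0,\lambda}(f(t_j+r_j^4\tilde{c}))$, combined with $r_j\to 0$ and the smooth convergence, yields
\begin{equation*}
  \lim_{t\nearrow T}\H_{c_0,\lambda}(f(t)) = \lim_{j\to\infty}\H_{c_0,\lambda}(f(t_j+r_j^4\tilde{c})) = \W(\hat{f}) = 4\pi,
\end{equation*}
and since $\H_{c_0,\lambda}(f)\geq\H_{c_0}(f)>4\pi$ by \Cref{rem:neg-c0-4pi} and $t\mapsto\H_{c_0,\lambda}(f(t))$ is monotone decreasing, this forces $\H_{c_0,\lambda}(f(t))\searrow 4\pi$. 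Sharpening the first area identity by $\W(f(t))\geq 4\pi$ to $\tfrac14 c_0^2\A(f(t))\leq\H_{c_0}(f(t))-4\pi\leq\H_{c_0,\lambda}(f(t))-4\pi\to 0$ finally gives $\A(f(t))\to 0$. The main obstacle is excluding $r_j\not\to 0$; the rest is essentially bookkeeping given the results already in place.
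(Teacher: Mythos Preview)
Your proof is correct and follows essentially the same route as the paper: the area bound, the blow-up construction mimicking \Cref{prop:conc-limit}, the application of \Cref{lem:kusch-rem-pt-sing}, and the energy/area limits all agree. The only cosmetic differences are that the paper deduces $r_j\to 0$ directly from item~(i) of \Cref{lem:ex-blow-up} (itself a consequence of \Cref{prop:life}, so equivalent to your contradiction argument), and obtains $\A(f(t))\to 0$ first along the subsequence via $\A(f(t_j+r_j^4\tilde c))=r_j^2\A(\hat f_j)\to 0$ rather than via your cleaner inequality $\tfrac14 c_0^2\A(f(t))\leq \H_{c_0,\lambda}(f(t))-4\pi$.
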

\begin{proof}
  By \Cref{lem:max-ex-time}, we find $T<\infty$. Using $\int_{\S^2}H\dd\mu\geq 0$ and $c_0<0$, an elementary computation yields
  \begin{equation}
    0\leq \W(f(t)) = \H_{c_0}(f(t)) + \frac12c_0\int_{\S^2}H\dd\mu\Big|_t -\frac14c_0^2\A(f(t)) \leq \H_{c_0,\lambda}(f_0) - \frac14c_0^2\A(f(t)).
  \end{equation}
  So $\A(f(t))\leq 4\H_{c_0,\lambda}(f_0)/c_0^2$ for all $0\leq t<T$ and \Cref{lem:ex-blow-up} applies. Furthermore, $t_j\nearrow T<\infty$ and (i) in \Cref{lem:ex-blow-up} yield $r_j\to 0$.

  Arguing exactly as in the proof of \Cref{prop:conc-limit}, there exists a non-empty, complete surface $\hat{\Sigma}$ without boundary and a proper immersion $\hat{f}\colon\hat{\Sigma}\to\R^3$ such that, for $\hat{\Sigma}(j)=\{p\in\hat{\Sigma}:|\hat{f}(p)|<j\}$ there exist $u_j\in C^{\infty}(\hat{\Sigma}(j),\R^3)$ and diffeomorphisms $\phi_j\colon\hat{\Sigma}(j)\to\phi_j(\hat{\Sigma}(j))\subseteq\S^2$ such that 
  \begin{equation}\label{eq:conv-rd-sp-1}
    \frac{f(t_j+r_j^4\tilde{c})-x_j}{r_j} \circ\phi_j=\hat{f}+u_j\quad\text{in $\hat{\Sigma}(j)$}
  \end{equation}
  and $\|\hat{\nabla}^mu_j\|_{L^{\infty}(\hat{\Sigma}(j))}\to 0$ as $j\to\infty$, for all $m\in\N_0$. Moreover, using $r_j\to 0$, as in the proof of (iii) in \Cref{prop:conc-limit}, one finds that $\hat{f}$ is a Willmore immersion. Using (iii) in \Cref{lem:ex-blow-up}, \eqref{eq:conv-rd-sp-1} yields
  \begin{equation}\label{eq:conv-rd-sp-3}
    \int_{B_1(0)} |\hat{A}|^2\dd\hat{\mu} > 0.
  \end{equation}
  By \Cref{lem:kusch-rem-pt-sing}, using $\W((f(t_j+r_j^4\tilde{c})-x_j)/r_j)=\W(f(t_j+r_j^4\tilde{c}))\leq 8\pi-\beta$, we may w.l.o.g. write $\hat{\Sigma}=\S^2$ and the convergence in \eqref{eq:conv-rd-sp} follows. Moreover, $\hat{f}$ necessarily parametrizes a round sphere. Furthermore, since $r_j\to 0$,
  \begin{equation}\label{eq:conv-rd-sp-4}
    \A(f(t_j+r_j^4\tilde{c})) = r_j^2 \A\Big(\frac{f(t_j+r_j^4\tilde{c})-x_j}{r_j}\circ \phi_j\Big) \to 0 \cdot \A(\hat{f}) = 0.
  \end{equation}
  By \eqref{eq:conv-rd-sp}, as $\hat{f}$ is a round sphere, $\W(f(t_j+r_j^4\tilde{c})) \to 4\pi$. Combined with \eqref{eq:conv-rd-sp-4}, one also finds $\H_{c_0,\lambda}(f(t_j+r_j^4\tilde{c}))\to 4\pi$ and the full convergence as $t\nearrow T$ then follows from \Cref{rem:en-dec}.
\end{proof}

\subsection{Excluding curvature concentration for positive $c_0$}\label{subsec:rneq0}

In this subsection we show that, at least for the Helfrich flow of topological spheres, the case $r=0$ in \Cref{prop:conc-limit} cannot occur. This is crucial in our \L ojasiewicz-Simon convergence argument in the next sections --- indeed, to put it simply, one could say that if $r=0$, all information on $c_0$ and $\lambda$ is lost in the concentration limit.

First of all, with the following two results, roughly speaking, we observe that there is no ``actual'' curvature concentration resulting in an unbounded concentration limit. That is, for the case $r=0$, we show that the radii $r_t$ in \Cref{lem:ex-blow-up} asymptotically behave like $\sqrt{\A(f(t))}$ as $t\nearrow T$.

To this end, we first establish that any radius of balls where curvature concentration occurs is suitably lower-bounded by the area if the surface is energetically close to a round sphere. Then, we show that the case $r=0$ already yields that the flow is energetically close to round spheres as $t\nearrow T$.

\begin{lemma}\label{lem:lb-rt}
  Let $\alpha\in(0,8\pi)$. Then there exist $\varepsilon,\eta>0$ with the following property. If $f\colon\S^2\to\R^3$ is an immersion with
  \begin{equation}
    \int_{\S^2}|A^0|^2\dd\mu < \varepsilon
  \end{equation}
  and $B_r(x)\subseteq\R^3$ with $\int_{B_r(x)}|A|^2\dd\mu\geq\alpha$, one finds
  \begin{equation}
    r^2\geq \eta \A(f).
  \end{equation}
\end{lemma}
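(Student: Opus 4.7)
The plan is to argue by contradiction and compactness, exactly in the spirit of the proof of \Cref{prop:pos-av-mc-for-sm-en}. If the conclusion fails for some $\alpha\in(0,8\pi)$, one extracts sequences of immersions $f_j\colon\S^2\to\R^3$ and Euclidean balls $B_{r_j}(x_j)\subseteq\R^3$ with $\W_0(f_j)\to 0$, $\int_{B_{r_j}(x_j)}|A_j|^2\dd\mu_j\geq\alpha$, and $r_j^2/\A(f_j)\to 0$. All of these quantities are invariant under simultaneous rescaling $f\mapsto\lambda(f-y_0)$ and $r\mapsto\lambda r$, so one normalizes $\A(f_j)=1$, which forces $r_j\to 0$ and, by \eqref{eq:int-a0sq}, $\W(f_j)=\tfrac{1}{2}(\W_0(f_j)+8\pi)\to 4\pi$. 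In particular $\W(f_j)<8\pi$ eventually, so by the Li--Yau inequality in \cite{liyau1982} each $f_j$ is an embedding.

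Next, I would apply the Mondino--Rivi\`ere compactness theorem as in the proof of \Cref{prop:pos-av-mc-for-sm-en}. After passing to a subsequence and using suitable bilipschitz/conformal reparametrizations of $\S^2$, one obtains a limit $f_\infty\colon\S^2\to\R^3$ that is a smooth parametrization of the round sphere of area $1$, hence of radius $(2\sqrt{\pi})^{-1}$ and constant mean curvature $4\sqrt{\pi}$. Since $\int|A_j|^2\dd\mu_j\to 8\pi=\int|A_\infty|^2\dd\mu_\infty$, the weak $W^{2,2}$-convergence of conformal parametrizations may be upgraded to strong $W^{2,2}$-convergence, which in turn yields strong $L^2$-convergence $A_j\to A_\infty$.

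With this setup the contradiction is obtained in two steps. Simon's monotonicity formula gives $\mu_j(B_{r_j}(x_j))\leq C\,\W(f_j)\,r_j^2\to 0$. Splitting $|A_j|^2=|A_j^0|^2+\tfrac{1}{2}H_j^2$, one estimates
\begin{equation*}
  \int_{B_{r_j}(x_j)}|A_j|^2\dd\mu_j\leq\W_0(f_j)+\tfrac{1}{2}\int_{B_{r_j}(x_j)}H_j^2\dd\mu_j,
\end{equation*}
where the first term vanishes in the limit. A uniform $L^\infty$-bound on $H_j$, together with the above area decay, would then give $\int_{B_{r_j}(x_j)}H_j^2\dd\mu_j\leq\|H_j\|_\infty^2\mu_j(B_{r_j}(x_j))\to 0$, which contradicts $\int_{B_{r_j}(x_j)}|A_j|^2\dd\mu_j\geq\alpha$.

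The hard part will be establishing this uniform $L^\infty$-bound on $H_j$: in two dimensions $W^{2,2}$ embeds into $C^{0,\alpha}$ but not into $W^{1,\infty}$, so the Mondino--Rivi\`ere convergence alone is insufficient. I would bridge this gap via the De Lellis--M\"uller rigidity theorem for nearly-umbilical spheres, which represents $f_j$ as a $W^{2,2}$-graph over a suitable round sphere with graph-function norm controlled by $\sqrt{\W_0(f_j)}$; an elliptic bootstrap using the smallness of $\int|A^0|^2\dd\mu$ then upgrades this to higher regularity. Alternatively, one can invoke smoothness refinements of the Mondino--Scharrer compactness in \cite{mondinoscharrer2020} together with the Langer-type compactness arguments already used elsewhere in this article (cf.\ the proof of \Cref{prop:conc-limit}) to get smooth subconvergence of $f_j$ to the round sphere $f_\infty$, from which the $L^\infty$-bound on $H_j$ follows at once.
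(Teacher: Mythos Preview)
Your contradiction setup and normalization match the paper's, but the approaches diverge after that, and yours has a genuine gap at exactly the point you flag. Neither of your proposed fixes for the $L^\infty$-bound on $H_j$ works as stated: De Lellis--M\"uller yields only $W^{2,2}$-closeness of a graph over a round sphere, and in two dimensions $W^{2,2}$ does not embed into $W^{1,\infty}$, so an ``elliptic bootstrap'' from smallness of $\int|A^0|^2\dd\mu$ alone is not available --- the coefficients of the relevant elliptic system depend on the immersion, which you only control in $W^{2,2}$. The Langer-type compactness route is circular: that compactness theorem \emph{assumes} uniform $L^\infty$-bounds on $A$ and its covariant derivatives, which is precisely what you need.

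The paper sidesteps this entirely by working with varifolds rather than parametrizations. After normalizing and noting $x_j\to x$ along a subsequence (using Topping's diameter bound), pass to a varifold limit $V_j\to V$ via Allard's compactness. Then $\A(V)=1$ forces $V\neq 0$, hence $\W(V)\geq 4\pi$. The key step is to exploit lower semi-continuity of the Willmore energy on the \emph{complement} of a small ball: for any $\delta>0$, eventually $B_{r_j}(x_j)\subseteq B_\delta(x)$, and since $\tfrac14 H_j^2=\tfrac12|A_j|^2-\tfrac12|A_j^0|^2$ one has $\tfrac14\int_{B_{r_j}(x_j)}H_j^2\dd\mu_j\geq\tfrac{\alpha}{2}-o(1)$. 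Thus
\begin{equation*}
  \tfrac14\int_{\R^3\setminus B_\delta(x)}|\vec{H}_V|^2\dd\mu_V \leq \liminf_{j\to\infty}\Bigl(\W(f_j)-\tfrac14\int_{B_{r_j}(x_j)}H_j^2\dd\mu_j\Bigr)\leq 4\pi-\tfrac{\alpha}{2},
\end{equation*}
and letting $\delta\to 0$ gives $\W(V)\leq 4\pi-\tfrac{\alpha}{2}<4\pi$, a contradiction. No pointwise curvature control is needed: the concentration of $|A|^2$ in shrinking balls is turned into a \emph{deficit} of Willmore energy in the limit, rather than something to be bounded above.
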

\begin{proof}
  For the sake of contradiction, suppose that there exist sequences $f_j\colon\S^2\to\R^3$ of immersions and of balls $B_{r_j}(x_j)\subseteq\R^3$ satisfying
  \begin{equation}\label{eq:lb-rt-1}
    \int_{\S^2}|A^0_j|^2\dd\mu_j\to 0,\quad \int_{B_{r_j}(x_j)} |A_j|^2\dd\mu_j\geq\alpha
  \end{equation}
  and $r_j^2/\A(f_j)\to 0$. By invariances with respect to scalings and translations, we may w.l.o.g. suppose that $0\in f_j(\S^2)$ and $\A(f_j)=1$ for all $j\in\N$. Then $r_j\to 0$.

  Notably, by \cite[Lemma~1]{topping1998}, $\mathrm{diam}(f_j)$ is uniformly bounded. That is, there exists $R>0$ with $f_j(\S^2)\subseteq B_R(0)$ for all $j\in\N$. By \eqref{eq:lb-rt-1}, $B_{r_j}(x_j)\cap B_R(0)\supseteq B_{r_j}(x_j)\cap f_j(\S^2)\neq\emptyset$. Especially, since $r_j\to 0$, after passing to a subsequence, we may w.l.o.g. suppose that $x_j\to x$ for some $x\in\R^3$.

  Denoting by $V_j$ the integer rectifiable varifold induced by $f_j$ (for instance cf. \cite[Section~1.1 and Example~2.4]{scharrer2022} for the relevant definitions), after passing to a subsequence, by Allard's compactness theorem in \cite[Theorem~6.4]{allard1972}, one finds $V_j\to V$ weakly as varifolds where $V$ is an integer rectifiable varifold with locally bounded first variation and generalized mean curvature vector $\vec{H}_V$. Further, using $\mathrm{supp}(\mu_{V_j})\subseteq B_R(0)$ for all $j\in\N$ and the weak* convergence $\mu_{V_j}\rightharpoonup^*\mu_V$ as Radon measures on $\R^3$, one finds $\A(V)=1$ and thus $V\neq 0$. Therefore, by \cite[Equation~(A.18)]{kuwertschaetzle2004}, $\W(V)\geq 4\pi$. Moreover, for $\delta>0$, using \cite[Theorem~6.3]{mantegazza1996} for the lower semi-continuity under varifold convergence, and $\W(f_j)\to 4\pi$ by \eqref{eq:lb-rt-1} and \eqref{eq:int-a0sq},
  \begin{align}
    \frac14\int_{\R^3\setminus B_{\delta}(x)} |\vec{H}_V|^2\dd\mu_V &\leq \frac14\liminf_{j\to\infty} \int_{\R^3\setminus B_{\delta}(x)} H_j^2\dd\mu_j \leq 4\pi - \limsup_{j\to\infty} \int_{B_{r_j}(x_j)} H_j^2\dd\mu_j\\
    &=4\pi - \limsup_{j\to\infty} \int_{B_{r_j}(x_j)} \big(\frac12|A_j|^2-\frac12|A^0_j|^2\big)\dd\mu \leq 4\pi-\frac12\alpha,
  \end{align}
  using \eqref{eq:a-a0-h}. So, taking the supremum with respect to $\delta>0$, one finds
  \begin{equation}
    4\pi \leq \W(V)=\frac14\int_{\R^3} |\vec{H}_V|^2\dd\mu_V \leq 4\pi-\frac12\alpha,
  \end{equation}
  a contradiction. 
\end{proof}

\begin{corollary}\label{cor:cor1-rj0}
  Let $c_0,\lambda>0$ and consider a $(c_0,\lambda)$-Helfrich flow $f\colon[0,T)\times\S^2\to\R^3$ satisfying
  \begin{equation}
    \H_{c_0,\lambda}(f_0) < \frac{2\lambda}{c_0^2+2\lambda}8\pi.
  \end{equation}
  Suppose that, for the sequences $(r_j)_{j\in\N}$, $t_j\nearrow T$ and $(x_j)_{j\in\N}$ in \Cref{lem:ex-blow-up,prop:conc-limit}, $\lim_{j\to\infty}r_j=0$. Then $T<\infty$, $\W(f(t))\to 4\pi$ and $\A(f(t))\to 0$ as $t\nearrow T$. Moreover, there exists a family of radii $(r_t)_{t\in[0,T)}$ and a constant $C>1$ with
  \begin{equation}
    \frac{1}{C}\A(f(t))\leq r_t^2\leq C \A(f(t))
  \end{equation} 
  and 
  \begin{equation}
    \alpha < \scurv(t,r_t) < \hat{c}\overline{\varepsilon}
  \end{equation}
  where $\alpha>0$ is as in \eqref{eq:ex-bu-1}, and $\hat{c}$ and $\overline{\varepsilon}$ are as in \Cref{prop:life}.
\end{corollary}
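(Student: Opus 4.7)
The plan is to identify the concentration limit as a round sphere on $\S^2$ and transfer information back to the original flow. Since $r_j \to 0$, \Cref{prop:conc-limit}(iii) shows that $\hat f$ is a $(0,0)$-Helfrich, i.e., Willmore, immersion. \Cref{rem:est-will-en-along-flow} yields $\W(f(t)) \leq 8\pi - \beta$ for some $\beta > 0$ throughout the flow, and by scale invariance of the Willmore energy this transfers to $\W(\hat f_j) = \W(f(t_j + r_j^4\tilde c))$. Combined with the nondegeneracy $\int_{B_1(0)} |\hat A|^2\dd\hat\mu > 0$ from \Cref{prop:conc-limit}(ii) and the compact-set convergence from (i), \Cref{lem:kusch-rem-pt-sing} forces $\hat\Sigma \cong \S^2$ and identifies $\hat f$ as a round sphere, with $\hat f_j \to \hat f$ smoothly on $\S^2$ after reparametrization.

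With $s_j := t_j + r_j^4\tilde c \to T$, smooth convergence yields $\A(\hat f_j) \to \A(\hat f) \in (0,\infty)$, so $\A(f(s_j)) = r_j^2 \A(\hat f_j) \to 0$, and by scale invariance $\W(f(s_j)) = \W(\hat f_j) \to \W(\hat f) = 4\pi$. Expanding $\H_{c_0,\lambda}(f) = \W(f) - \tfrac{1}{2}c_0 \int_{\S^2} H\dd\mu + (\tfrac14 c_0^2 + \tfrac12\lambda)\A(f)$ and bounding $|\int_{\S^2} H\dd\mu| \leq 2\sqrt{\W(f)\A(f)}$ via Cauchy-Schwarz then yields $\H_{c_0,\lambda}(f(s_j)) \to 4\pi$. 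Monotonicity of $\H_{c_0,\lambda}$ along the flow (\Cref{rem:en-dec}) upgrades this to $\lim_{t\nearrow T} \H_{c_0,\lambda}(f(t)) = 4\pi$ with $\H_{c_0,\lambda}(f(t)) \geq 4\pi$ on $[0,T)$.

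The key remaining steps---full convergence of $\A$ and $\W$, and the finite-time property $T < \infty$---are argued by contradiction via a secondary blow-up. Given an arbitrary subsequence $\tau_k \to T$, apply \Cref{lem:ex-blow-up,prop:conc-limit} to $(\tau_k)$, producing new radii $\tilde r_k \to \tilde r \geq 0$ and a concentration limit $\tilde f$ with $\H_{\tilde r c_0,\tilde r^2 \lambda}(\tilde f) \leq 4\pi$. The case $\tilde r > 0$, where $\tilde f$ would be a nontrivial $(\tilde r c_0, \tilde r^2\lambda)$-Helfrich immersion, is excluded by the smooth spherical convergence from Paragraph~1 combined with the attractor analysis of the spherical ODE \eqref{eq:spheres-ode}: in the small-$r$ regime $\dot r \sim 2c_0/r^2 > 0$ for $c_0 > 0$, incompatible with maintaining $\A(f(\tau_k))$ bounded away from zero while the surface is asymptotically spherical. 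Hence $\tilde r = 0$, and paragraphs~1--2 applied to the new blow-up give $\A(f(\tau_k + \tilde r_k^4\tilde c)) \to 0$. The finite energy dissipation $\int_0^T \int_{\S^2} |\partial_t f|^2\dd\mu\dd t < \infty$ from \Cref{rem:en-dec} and Cauchy-Schwarz yield
$|\A(f(\tau_k)) - \A(f(\tau_k + \tilde r_k^4\tilde c))| \leq \int_{\tau_k}^{\tau_k + \tilde r_k^4\tilde c} \|H\|_{L^2}\|\partial_t f\|_{L^2}\dd t \to 0$,
forcing $\A(f(\tau_k)) \to 0$; convergence $\W(f(t)) \to 4\pi$ then follows from $|\W(f) - \H_{c_0,\lambda}(f)| \leq |c_0|\sqrt{\W\A} + (\tfrac14 c_0^2 + \tfrac12\lambda)\A \to 0$. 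The finite-time property $T < \infty$ follows from the same ODE analysis, which excludes an infinite-time shrinkage of the asymptotically spherical surface since small spheres strictly expand under the flow.

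Finally, for the radii: take $(r_t)$ from the proof of \Cref{lem:ex-blow-up} satisfying $\alpha < \scurv(t, r_t) < \hat c\bar\varepsilon$. The upper bound $r_t^2 \leq \tfrac{1}{\pi^2}\W(f(t))\A(f(t)) \leq C\A(f(t))$ comes from Claim~1 in the proof of \Cref{lem:ex-blow-up} and the uniform Willmore bound. For the lower bound, Paragraph~3 combined with \eqref{eq:int-a0sq} (using $g = 0$) gives $\int_{\S^2}|A^0|^2\dd\mu \to 0$, so \Cref{lem:lb-rt} applies for $t$ sufficiently close to $T$ and produces $r_t^2 \geq \eta\A(f(t))$; enlarging $C$ uniformly by continuity on compact intervals $[0,T-\varepsilon]$ extends the bound to all $t$. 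The main obstacle is the argument in Paragraph~3, specifically excluding the case $\tilde r > 0$ and deriving $T < \infty$: both rest on a delicate interplay between the smooth spherical convergence of the blow-up and the precise structure of the spherical ODE \eqref{eq:spheres-ode} for $c_0 > 0$.
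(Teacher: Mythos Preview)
Your first two paragraphs and your final paragraph match the paper's proof closely and are correct. The genuine gap is in Paragraph~3, where you try to obtain $T<\infty$ and full convergence of $\A(f(t))$ via a secondary blow-up plus ODE heuristics.

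First, you cannot ``apply \Cref{lem:ex-blow-up,prop:conc-limit} to $(\tau_k)$'' for a prescribed sequence $\tau_k$: the construction in \Cref{lem:ex-blow-up} selects its own times $t_j$ (those satisfying $r_{t_j+r_j^4\tilde c}\leq M r_{t_j}$), and nothing guarantees this holds at the given $\tau_k$. Handling arbitrary sequences is precisely the content of the subsequent \Cref{prop:prop1-rj0}, which relies on the conclusion $r_t^2\sim\A(f(t))$ of the present corollary. Second, even granting a secondary blow-up, your exclusion of $\tilde r>0$ and your proof that $T<\infty$ both invoke the spherical ODE \eqref{eq:spheres-ode}. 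This is only heuristic: the surfaces are not spheres, merely close to them after rescaling, and no stability estimate is provided to transfer the ODE behavior to the actual flow. Note also that $\tilde r>0$ is not obviously contradictory---a suitable round sphere \emph{is} an $(\tilde r c_0,\tilde r^2\lambda)$-Helfrich immersion with energy below $4\pi$.

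The paper replaces all of this by a direct quantitative argument. From smooth convergence to a round sphere along $s_j$ one has $\W_0(f(s_j))\to 0$; \Cref{lem:sm-en} then shows $\W_0(f(t))$ stays small on uniform intervals $[s_j,s_j+\delta]$ and provides the $L^\infty$-bound \eqref{eq:sm-en-linfty-estimate} on $A^0$. Assuming $T=\infty$, one computes the volume increment using $\partial_t\V$ from \Cref{prop:ev-eq} and Gauss--Bonnet:
\[
\V(f(s_j+\delta))-\V(f(s_j)) \geq 8\pi c_0\,\delta - C(\lambda,c_0,\delta)\sup_{[s_j,s_j+\delta]}\sqrt{\A(f)},
\]
where the crucial positive term $8\pi c_0\delta$ arises from $2c_0\int_{\S^2}K\dd\mu$. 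Since $\A(f(t))$ is shown to be uniformly small on these intervals (via the energy-dissipation estimate you already use), the volume is bounded below by a positive constant, contradicting $\A\to 0$ through the isoperimetric inequality. This Gauss--Bonnet volume argument is the rigorous incarnation of your ``small spheres expand'' intuition; once $T<\infty$ is established, the interval estimates from \Cref{lem:sm-en} cover $[t_0,T)$ and give full convergence $\W_0(f(t))\to 0$, $\A(f(t))\to 0$ directly.
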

\begin{proof}
  We first argue that the concentration limit $\hat{f}$ in \Cref{prop:conc-limit} necessarily parametrizes a round sphere. Note that $r_j\to 0$ especially yields that $\hat{f}$ is a Willmore immersion. Using \eqref{eq:will-below-8pi}, for some $\beta>0$,
  \begin{equation}\label{eq:cor1-rjo0--1}
    \W((f(t_j+r_j^4\tilde{c})-x_j)/r_j)=\W(f(t_j+r_j^4\tilde{c})) \leq 8\pi-\beta.
  \end{equation}
  Thus, by \Cref{lem:kusch-rem-pt-sing}, we may write $\hat{\Sigma}=\S^2$ and $\hat{f}_j=f_j(\tilde{c})$ converges smoothly on $\S^2$ to a round sphere, after reparametrization and passing to a subsequence. Particularly, 
  \begin{equation}
    \W_0(f(t_j+r_j^4\tilde{c}))=\int_{\S^2}|A^0|^2\dd\mu \Big|_{t=t_j+r_j^4\tilde{c}} \to 0\quad\text{and}\quad \A(f(t_j+r_j^4\tilde{c})) = r_j^2\A(\hat{f}_j) \to 0
  \end{equation}
  for $j\to\infty$. With \Cref{lem:sm-en}, fixing any $\delta>0$, after passing to a subsequence, this improves to
  \begin{equation}\label{eq:cor1-rj0-1}
    \sup_{t\in[t_j+r_j^4\tilde{c},t_j+r_j^4\tilde{c}+\delta]\cap[0,T)}\int_{\S^2}|A^0|^2\dd\mu\to 0\quad\text{for $j\to \infty$}.
  \end{equation}
  \begin{claim}\label{cl:t-sm-infty}
    $T<\infty$.
  \end{claim}
  To this end, for the sake of contradiction, suppose that $T=\infty$. By \eqref{eq:sm-en-linfty-estimate}, 
  \begin{align}
    \int_{t_j+r_j^4\tilde{c}}^{t_j+r_j^4\tilde{c}+\delta} \|A^0\|_{L^{\infty}}^4\dd\tau \to 0\label{eq:cor1-rj0--2}
  \end{align}  
  as $j\to\infty$. Since by \eqref{eq:will-below-8pi},
  \begin{align}
    \Big|\frac{\dd}{\dd t} \A(f(t))\Big| &= -\int_{\S^2}H\langle\partial_tf,\nu\rangle\dd\mu \leq \big(4\W(f(t))\big)^{\frac12}\Big(\int_{\S^2}|\partial_tf|^2\dd\mu\Big)^{\frac12} \\
    &\leq C \Big(-\frac{\dd}{\dd t}\H_{c_0,\lambda}(f(t))\Big)^{\frac12},
  \end{align}
  one finds
  \begin{align}
    &\sup_{t\in[t_j+r_j^4\tilde{c},t_j+r_j^4\tilde{c}+\delta]} \A(f(t)) \\
    &\quad\leq \A(f(t_j+r_j^4\tilde{c})) + C\sqrt{\delta}\sqrt{\H_{c_0,\lambda}(f(t_j+r_j^4\tilde{c}))-\lim_{\tau\nearrow T}\H_{c_0,\lambda}(\tau)} \to 0\label{eq:cor2-rj0-2}
  \end{align}
  for $j\to\infty$. Thus, using \Cref{prop:ev-eq}, \eqref{eq:flow-eq}, \eqref{eq:Gauss-curv}, and $\int_{\S^2}K\dd\mu=4\pi$ by Gauss-Bonnet,
  \begin{align}
    &\V(f(t_j+r_j^4\tilde{c}+\delta))-\V(f(t_j+r_j^4\tilde{c})) \\
    &= \int_{t_j+r_j^4\tilde{c}}^{t_j+r_j^4\tilde{c}+\delta}\Big(\int_{\S^2} \big(|A^0|^2H -c_0|A^0|^2+\frac12c_0H^2-(\frac12c_0^2+\lambda)H\big)\dd\mu\Big)\dd\tau \\
    &= \int_{t_j+r_j^4\tilde{c}}^{t_j+r_j^4\tilde{c}+\delta}\Big(\int_{\S^2}|A^0|^2H\dd\mu + 2c_0\int_{\S^2}K\dd\mu - (\frac12c_0^2+\lambda)\int_{\S^2}H\dd\mu \Big)\dd\tau\\
    &\geq c_0 8\pi \delta - \int_{t_j+r_j^4\tilde{c}}^{t_j+r_j^4\tilde{c}+\delta} \sqrt{\A(f(\tau))}\sqrt{4\W(f(\tau))} \Big(\|A^0\|_{L^{\infty}}^2+\frac12c_0^2+\lambda\Big)\dd\tau \\
    &\geq c_08\pi\delta - C\sqrt{\delta}\sup_{\tau\in[t_j+r_j^4\tilde{c},t_j+r_j^4\tilde{c}+\delta]}\sqrt{\A(f(\tau))} \Big(\int_{t_j+r_j^4\tilde{c}}^{t_j+r_j^4\tilde{c}+\delta}\|A^0\|_{L^{\infty}}^4\dd\tau+\delta(\frac12c_0^2+\lambda)^2\Big)^{\frac12}\\
    &\geq c_08\pi\delta - C(\lambda^2,c_0^4,\delta)\sup_{\tau\in[t_j+r_j^4\tilde{c},t_j+r_j^4\tilde{c}+\delta]}\sqrt{\A(f(\tau))},
  \end{align}
  also using \eqref{eq:cor1-rj0--2}. This gives a uniform, positive lower bound for $\V(f(t_j+r_j^4\tilde{c}+\delta))$, thus by the isoperimetric inequality contradicting $\A(f(t_j+r_j^4\tilde{c}+\delta))\to 0$. This proves \Cref{cl:t-sm-infty}.

  Using \Cref{cl:t-sm-infty}, \eqref{eq:cor1-rj0-1} and \eqref{eq:cor2-rj0-2} yield
  \begin{equation}\label{eq:cor1-rj0-3}
    \int_{\S^2}|A^0|^2\dd\mu\Big|_{t}\to 0\quad\text{and}\quad\A(f(t))\to 0\quad\text{for $t\nearrow T$}.
  \end{equation}
  With the construction in the proof of \Cref{lem:ex-blow-up}, using the above time-uniform bounds on $\W(f(t))$ and $\A(f(t))$, there exist radii $(r_t)_{t\in[0,T)}$ and $\alpha>0$ such that $\alpha<\scurv(t,r_t)<\hat{c}\overline{\varepsilon}$. Moreover, by \eqref{eq:cl-bu-1} and \eqref{eq:will-below-8pi},
  \begin{equation}
    r_t\leq \frac{1}{\pi}\sqrt{\W(f(t))} \sqrt{\A(f(t))}\leq \sqrt{8/\pi} \sqrt{\A(f(t))}\quad\text{for $0\leq t<T$}.
  \end{equation}
  Furthermore, \eqref{eq:cor1-rj0-3} and \Cref{lem:lb-rt} yield the reversed estimate.
\end{proof}

\begin{proposition}[Sub-convergence to round spheres]\label{prop:prop1-rj0}
  Let $c_0,\lambda>0$ and consider a $(c_0,\lambda)$-Helfrich flow $f\colon[0,T)\times\S^2\to\R^3$ satisfying
  \begin{equation}
    \H_{c_0,\lambda}(f_0) < \frac{2\lambda}{c_0^2+2\lambda}8\pi.
  \end{equation}
  Suppose that, for $(r_j)_{j\in\N}$, $t_j\nearrow T$ and $(x_j)_{j\in\N}$ as in \Cref{lem:ex-blow-up}, $\liminf_{j\to\infty}r_j=0$.

  Then $T<\infty$ and, for every sequence $\tau_j\nearrow T$, there are sequences $\tilde{r}_j>0$ and $\tilde{x}_j\in\R^3$ such that $(f(\tau_j)-\tilde{x}_j)/\tilde{r}_j$ smoothly converges to a round sphere up to reparametrization and passing to a subsequence, for $j\to\infty$. Particularly, there exists $t_0\in(0,T)$ with
  \begin{equation}\label{eq:a0sqH-has-a-sign}
    \int_{\S^2}|A^0|^2H\dd\mu\Big|_{t}\geq 0\quad\text{for all $t_0\leq t<T$}.
  \end{equation}
\end{proposition}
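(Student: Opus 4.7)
The plan is to construct, for each $\tau_j$, an auxiliary rescaled Helfrich flow whose time-$\widetilde{c}$ slice realizes $(f(\tau_j)-\widetilde{x}_j)/\widetilde{r}_j$, and then to invoke the concentration-limit machinery of \Cref{prop:conc-limit} combined with \Cref{lem:kusch-rem-pt-sing} to identify the limit as a round sphere.

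First, since $\liminf_{j\to\infty}r_j=0$, I would pass to a subsequence to arrange $r_j\to 0$ and apply \Cref{cor:cor1-rj0}. This delivers $T<\infty$, $\W(f(t))\to 4\pi$ and $\A(f(t))\to 0$ as $t\nearrow T$, together with a family of radii $(r_t)_{t\in[0,T)}$ satisfying $C^{-1}\A(f(t))\leq r_t^2\leq C\A(f(t))$ and $\alpha<\scurv(t,r_t)<\hat{c}\overline{\varepsilon}$; in particular $r_t\to 0$ as $t\nearrow T$.

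Given an arbitrary $\tau_j\nearrow T$, I would set $\widetilde{r}_j:=r_{\tau_j}/2$, $s_j:=\tau_j-\widetilde{r}_j^4\widetilde{c}$ (well-defined and positive for $j$ large), and choose $\widetilde{x}_j\in\R^3$ with $\int_{B_{\widetilde{r}_j}(\widetilde{x}_j)}|A|^2\dd\mu\big|_{\tau_j}\geq\alpha/(2N)$ by a standard covering of $B_{r_{\tau_j}}$ by balls of radius $\widetilde{r}_j$. The delicate first step is to verify the hypothesis of \Cref{prop:life} at $(s_j,\widetilde{r}_j)$: combining Cauchy--Schwarz on the flow equation with the integrated energy-dissipation $\int_{s_j}^{\tau_j}\int_{\S^2}|\partial_tf|^2\dd\mu\dd\tau\to 0$ (analogous to the proof of \Cref{cor:cor1-rj0}), I would obtain $\A(f(s_j))/\A(f(\tau_j))\to 1$ and hence $r_{s_j}/r_{\tau_j}\to 1$, so $\widetilde{r}_j=r_{\tau_j}/2\leq r_{s_j}$ for $j$ large. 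Monotonicity of $\scurv$ in the radius then gives $\scurv(s_j,\widetilde{r}_j)\leq\scurv(s_j,r_{s_j})<\hat{c}\overline{\varepsilon}<\overline{\varepsilon}$, so \Cref{prop:life} produces an $(\widetilde{r}_jc_0,\widetilde{r}_j^2\lambda)$-Helfrich flow $\widetilde{g}_j(t,p):=(f(s_j+\widetilde{r}_j^4t,p)-\widetilde{x}_j)/\widetilde{r}_j$ on $[0,\widetilde{c}]$ (since $\widetilde{r}_j\leq r_{\max}$) with $\scurv_{\widetilde{g}_j}(\cdot,1)\leq\overline{\varepsilon}$ and, by construction, $\widetilde{g}_j(\widetilde{c})=(f(\tau_j)-\widetilde{x}_j)/\widetilde{r}_j$.

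The remainder of the argument parallels \Cref{prop:conc-limit,prop:spherical-shrinker}: \Cref{prop:ho-en-est} yields uniform smooth bounds on $\widetilde{g}_j(\widetilde{c})$; localized Langer compactness extracts, after reparametrization and passing to a subsequence, smooth convergence on compact subsets to a proper immersion $\hat{f}$ of a complete surface $\hat{\Sigma}$; the rescaled $L^2$-energy-dissipation tending to $0$ forces $\hat{f}$ to be stationary and, since $\widetilde{r}_jc_0,\widetilde{r}_j^2\lambda\to 0$, Willmore; and the uniform bound $\W(f(\tau_j))\leq 8\pi-\beta$ from \eqref{eq:will-below-8pi} allows \Cref{lem:kusch-rem-pt-sing} to identify $\hat{\Sigma}=\S^2$ with $\hat{f}$ parametrizing a round sphere, the convergence being smooth on $\S^2$. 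For \eqref{eq:a0sqH-has-a-sign}, smooth convergence to a round sphere with uniformly positive mean curvature forces $H_{f(\tau_j)}>0$ pointwise for $j$ large, hence $\int_{\S^2}|A^0|^2H\dd\mu\big|_{\tau_j}\geq 0$; a standard contradiction argument with a putative sequence violating the inequality then upgrades this to $t\in[t_0,T)$ for some $t_0<T$. The main obstacle is the area-continuity step required to transfer the concentration bound from $r_{\tau_j}$ to $\widetilde{r}_j$ at the earlier time $s_j$, for which coupling the $L^2$-dissipation estimate on $\partial_tf$ with the monotonicity of $\scurv$ in the radius is essential.
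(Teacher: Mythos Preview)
Your approach is essentially the paper's: both go back in time from $\tau_j$ by an amount comparable to $r_{\tau_j}^4$, invoke the area--radius comparison of \Cref{cor:cor1-rj0} to transfer the concentration bound to the earlier time, apply the life-span result and higher-order estimates, and then run the concentration-limit machinery with the rescaling radii tending to zero so that \Cref{lem:kusch-rem-pt-sing} identifies the limit as a round sphere. The main structural difference is that you arrange the rescaled flow to hit $\tau_j$ at the \emph{fixed} time $\tilde{c}$ (and pick the concentration point $\tilde{x}_j$ at $\tau_j$ itself), whereas the paper sets $\tilde{r}_j=r_{s_j}$ and lands $\tau_j$ at a \emph{variable} time $\zeta_j\in[\zeta,\tilde{c}]$, then uses the uniform area bounds rather than the no-plane condition to force compactness of $\hat{\Sigma}$. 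Your variant is arguably cleaner.

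There is one genuine imprecision. From $\A(f(s_j))/\A(f(\tau_j))\to 1$ together with the two-sided bound $C^{-1}\A(f(t))\leq r_t^2\leq C\A(f(t))$ of \Cref{cor:cor1-rj0}, you can only conclude that $r_{s_j}/r_{\tau_j}$ is bounded above and below by positive constants depending on $C$ (this is exactly the content of the paper's \Cref{cl:est-rt}), \emph{not} that the ratio tends to $1$. If $C>2$, your choice $\tilde{r}_j=r_{\tau_j}/2$ need not satisfy $\tilde{r}_j\leq r_{s_j}$, and the monotonicity step $\scurv(s_j,\tilde{r}_j)\leq\scurv(s_j,r_{s_j})$ fails. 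The fix is immediate: replace the factor $1/2$ by any factor strictly smaller than the reciprocal of the constant governing the area--radius comparison, e.g.\ set $\tilde{r}_j=r_{\tau_j}/(2C)$, and your argument goes through unchanged.
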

\emph{Remark.} The remarkable new insight in this proposition is the ``full'' sub-convergence to suitable round spheres one obtains for $t\nearrow T$. So far, using only the blow-up construction in \Cref{lem:ex-blow-up}, in general, we are only able to prove the existence of \emph{some} sequence of times for which the concentration limit is a round sphere. Here we show way more: For any given sequence of times converging to $T$, there exists a subsequence such that, along this subsequence, the Helfrich flow behaves like a suitably rescaled and translated round sphere. And this is exactly what we need in order to give integrals as in \eqref{eq:a0sqH-has-a-sign} a sign for $t\approx T$.
\begin{proof}[Proof of \Cref{prop:prop1-rj0}.]
  Consider some $0<\delta<1$ to be chosen later and consider $(r_t)_{t\in[0,T)}$ as in \Cref{cor:cor1-rj0}. Since $T<\infty$ and as $t+r_t^4\tilde{c}<T$ by \Cref{prop:life}, one finds $r_t\to 0$. Particularly, passing to a subsequence, we may w.l.o.g. suppose that $s_j\vcentcolon=\tau_j-r_{\tau_j}^4\delta >0$ for all $j\in\N$.   
  \begin{claim}\label{cl:est-rt}
    There is $\hat{C}>1$ independent of $\delta$ with $\displaystyle\frac{1}{\hat{C}} < \liminf_{j\to\infty}\frac{r_{\tau_j}}{r_{s_j}}\leq \limsup_{j\to\infty}\frac{r_{\tau_j}}{r_{s_j}} < \hat{C}$.
  \end{claim}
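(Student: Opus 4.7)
The plan is to reduce the ratio $r_{\tau_j}/r_{s_j}$ to a ratio of areas by means of \Cref{cor:cor1-rj0}, and then to control the change of area on the short time interval $[s_j,\tau_j]$ of length $r_{\tau_j}^4\delta$ via the monotonicity of $\H_{c_0,\lambda}$ along the flow. Concretely, \Cref{cor:cor1-rj0} supplies a universal constant $C_0>1$ such that $\frac{1}{C_0}\A(f(t))\leq r_t^2\leq C_0\A(f(t))$ for all $t\in[0,T)$, so it suffices to show $\A(f(s_j))/\A(f(\tau_j))\to 1$ as $j\to\infty$.

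To bound the area change, the first variation of area combined with Cauchy-Schwarz, the uniform Willmore bound \eqref{eq:will-below-8pi}, and the identity $\int_{\S^2}|\partial_tf|^2\dd\mu=-2\frac{\dd}{\dd t}\H_{c_0,\lambda}(f(t))$ from \Cref{rem:en-dec} yield a time-independent constant $C_1>0$ with
\[
\Big|\frac{\dd}{\dd t}\A(f(t))\Big|\leq C_1\sqrt{-\tfrac{\dd}{\dd t}\H_{c_0,\lambda}(f(t))}.
\]
Integrating over $[s_j,\tau_j]$ and applying Cauchy-Schwarz once more in time, using $\tau_j-s_j=r_{\tau_j}^4\delta$, one obtains
\[
|\A(f(\tau_j))-\A(f(s_j))|\leq C_1 r_{\tau_j}^2\sqrt{\delta}\cdot\sqrt{\H_{c_0,\lambda}(f(s_j))-\H_{c_0,\lambda}(f(\tau_j))}.
\]
Applying $r_{\tau_j}^2\leq C_0\A(f(\tau_j))$ and the monotone convergence of $\H_{c_0,\lambda}(f(t))$ as $t\nearrow T$ (again from \Cref{rem:en-dec}), the right-hand side is at most $\sqrt{\delta}\cdot\varepsilon_j\cdot\A(f(\tau_j))$ with $\varepsilon_j\to 0$.

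Dividing through, $|\A(f(s_j))/\A(f(\tau_j))-1|\to 0$ as $j\to\infty$, so for $j$ sufficiently large this ratio lies in $[\tfrac12,2]$. Inserting this into the two-sided comparison from the first paragraph gives $r_{\tau_j}^2/r_{s_j}^2\in[1/(2C_0^2),2C_0^2]$, and the claim follows with, say, $\hat{C}=\sqrt{2}\,C_0$, manifestly independent of $\delta$. The only delicate point is checking that this bound does not degenerate as $\delta\to 0$ or $\delta\to 1$; this is automatic here since the error carries an explicit factor $\sqrt{\delta}$ times a quantity vanishing in $j$ independently of $\delta$, so the Cauchy-Schwarz split between the time length $r_{\tau_j}^4\delta$ and the (vanishing) total energy drop is already tight enough. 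Notably, no compactness or blow-up machinery is needed for this step: the argument rests only on the energy monotonicity of the flow and the a priori two-sided comparison $r_t^2\asymp\A(f(t))$ from \Cref{cor:cor1-rj0}.
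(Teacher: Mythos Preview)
Your argument is correct and follows essentially the same route as the paper's proof: both reduce to the two-sided comparison $r_t^2\asymp\A(f(t))$ from \Cref{cor:cor1-rj0}, bound $|\A(f(s_j))-\A(f(\tau_j))|$ via the first variation of area, Cauchy--Schwarz, the uniform Willmore bound \eqref{eq:will-below-8pi}, and the energy decay (exactly as in \eqref{eq:cor2-rj0-2}), and then absorb $r_{\tau_j}^2$ back into $\A(f(\tau_j))$. Your write-up is slightly more explicit about the $\sqrt{\delta}$ factor and the $\delta$-independence of $\hat{C}$, but the strategy is identical.
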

  By \Cref{cor:cor1-rj0}, one finds for some constant $\tilde{C}>1$ 
  \begin{equation}
    \frac{1}{\tilde{C}^2}\frac{\A(f(\tau_j))}{\A(f(s_j))}\leq \frac{r_{\tau_j}^2}{r_{s_j}^2} \leq \tilde{C}^2\frac{\A(f(\tau_j))}{\A(f(s_j))}.
  \end{equation}
  Moreover, using \eqref{eq:will-below-8pi} and arguing as in \eqref{eq:cor2-rj0-2}, one finds
  \begin{align}
    |\A(f(s_j))-\A(f(\tau_j))|&\leq C\sqrt{r_{\tau_j}^4\delta} \sqrt{\H_{c_0,\lambda}(f(s_j))-\H_{c_0,\lambda}(f(\tau_j))} \leq r_{\tau_j^2}o(1)\\
    &\leq \A(f(\tau_j))o(1)
  \end{align}
  as $j\to\infty$. Altogether, one concludes \Cref{cl:est-rt}.
  \begin{claim}\label{cl:bd-xij}
    There exists $\delta\in(0,1)$ such that $\tau_j=s_j+r_{s_j}^4\zeta_j$ with $0<\zeta\leq \zeta_j\leq \tilde{c}$.
  \end{claim}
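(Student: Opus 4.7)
The plan is to reduce Claim~\ref{cl:bd-xij} to a direct algebraic manipulation of the bound on the ratio $r_{\tau_j}/r_{s_j}$ already secured in Claim~\ref{cl:est-rt}. From the definition $s_j = \tau_j - r_{\tau_j}^4\delta$ one has $\tau_j - s_j = \delta\, r_{\tau_j}^4$, so writing $\tau_j = s_j + r_{s_j}^4 \zeta_j$ uniquely forces
\[
  \zeta_j = \delta\,\left(\frac{r_{\tau_j}}{r_{s_j}}\right)^4.
\]
Claim~\ref{cl:est-rt} then furnishes a constant $\hat{C}>1$, independent of $\delta$, such that after passing to a tail subsequence $\hat{C}^{-1}\le r_{\tau_j}/r_{s_j}\le \hat{C}$, whence $\zeta_j\in[\delta\hat{C}^{-4},\delta\hat{C}^4]$ for $j$ sufficiently large.

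With this in hand, I would fix $\delta\in(0,1)$ small enough that $\delta\hat{C}^4\le \tilde{c}$ --- for instance $\delta := \tfrac{1}{2}\min\{1,\tilde{c}\hat{C}^{-4}\}$ --- and then set $\zeta := \delta\hat{C}^{-4}>0$. Discarding finitely many indices and relabeling the subsequence, this yields $0<\zeta\le \zeta_j\le \tilde{c}$ for all $j\in\N$, which is precisely the assertion of the claim.

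The only conceptual point that warrants attention --- and which I anticipate as the main (mild) obstacle --- is the independence of $\hat{C}$ from $\delta$: otherwise the choice of $\delta$ above would be circular, as $s_j$ and hence $\hat{C}$ would depend on $\delta$ from the outset. This independence is built into the statement of Claim~\ref{cl:est-rt} and can be read off from its proof, where the decisive estimate $|\A(f(s_j))-\A(f(\tau_j))|\le r_{\tau_j}^2\, o(1)$ arises from a bound of the form $C\sqrt{\delta}\,\sqrt{\H_{c_0,\lambda}(f(s_j))-\H_{c_0,\lambda}(f(\tau_j))}$, whose decay to $0$ as $j\to\infty$ is uniform in $\delta\in(0,1)$ by virtue of the monotone convergence of $\H_{c_0,\lambda}(f(t))$ as $t\nearrow T$ (and the fact that $s_j\to T$ since $r_{\tau_j}\to 0$). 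Thus $\hat{C}$ is indeed fixed once and for all, and $\delta$ may then be selected based on it.
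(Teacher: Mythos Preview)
Your proof is correct and follows essentially the same route as the paper: both arguments derive $\zeta_j=\delta(r_{\tau_j}/r_{s_j})^4$ from the definition of $s_j$, invoke Claim~\ref{cl:est-rt} to trap $\zeta_j\in[\delta\hat{C}^{-4},\delta\hat{C}^4]$, and then choose $\delta\le\tilde{c}\hat{C}^{-4}$. Your explicit discussion of why $\hat{C}$ is independent of $\delta$ is a welcome clarification of a point the paper simply asserts in the statement of Claim~\ref{cl:est-rt}.
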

  First of all,
  \begin{equation}
    \tau_j=s_j+r_{s_j}^4\zeta_j = \tau_j-r_{\tau_j}^4\delta+r_{s_j}^4\zeta_j.
  \end{equation}
  Thus, using \Cref{cl:est-rt}, after passing to a subsequence, 
  \begin{equation}
    \frac{\delta}{\hat{C}^4}\leq \zeta_j\leq \hat{C}^4\delta
  \end{equation}
  for all $j\in\N$. Thus, choosing $\delta\leq \frac{\tilde{c}}{\hat{C}^4}$, \Cref{cl:bd-xij} follows.

  Write $\tilde{r}_j=r_{s_j}$. Proceeding as in \Cref{cl:est-rt}, one finds that, for some $M>0$,
  \begin{equation}
    \limsup_{j\to\infty} \frac{r_{s_j+\tilde{r}_j^4\tilde{c}}}{r_{s_j}} < M.
  \end{equation}
  Thus, proceeding as in the proof of \Cref{lem:ex-blow-up}, there is $N=N(M)$ such that one can choose $\tilde{x}_j\in\R^3$ with $\int_{B_{\tilde{r}_j}(\tilde{x}_j)}|A|^2\dd\mu\Big|_{t=s_j+\tilde{r}_j^4\tilde{c}}\geq\frac{\alpha}{2N}$. 

  Now define $f_j\colon[\zeta,\tilde{c}]\times\S^2\to\R^3$, $f_j(t,p)=\frac{1}{\tilde{r}_j}(f(s_j+\tilde{r}_j^4t,p)-\tilde{x}_j)$. Proceeding as in the proof of \Cref{prop:conc-limit}, especially using the lower bound $\zeta_j\geq \zeta>0$ in \Cref{cl:bd-xij} as well as $\tilde{r}_j\to 0$, one finds that, up to passing to a subsequence and reparametrization, $\hat{f}_j\vcentcolon=\frac{1}{\tilde{r}_j}(f(\tau_j)-\tilde{x}_j)=f_j(\zeta_j)$ converges to some proper Willmore immersion $\hat{f}\colon\hat{\Sigma}\to\R^3$ smoothly on compact subsets of $\R^3$. Moreover, by \Cref{cl:est-rt} and \Cref{cor:cor1-rj0}, one finds that $\A(\hat{f}_j)$ is uniformly bounded from above and below. Thus, arguing as in the proof of \Cref{cor:cor1-rj0}, one can w.l.o.g. write $\hat{\Sigma}=\S^2$ and obtain that $\hat{f}$ necessarily parametrizes a round sphere. 
  
  Then \eqref{eq:a0sqH-has-a-sign} can be proved by contradiction using the above sub-convergence result and a standard subsequence argument.
\end{proof}

\begin{corollary}\label{cor:cor2-rj0}
  Let $c_0>0$ and $\lambda > 0$. Let $f\colon[0,T)\times\Sigma\to\R^3$ be a maximal $(c_0,\lambda)$-Helfrich flow on a compact, closed and oriented surface $\Sigma$ satisfying
  \begin{equation}\label{eq:as-stab-en-thresh}
    \H_{c_0,\lambda}(f_0) < \frac{2\lambda}{c_0^2+2\lambda}8\pi
  \end{equation}
  and $\hat{f}\colon\hat{\Sigma}\to\R^3$ a concentration limit as in \Cref{prop:conc-limit}. Then $r_j\to r\in (0,\infty)$.
\end{corollary}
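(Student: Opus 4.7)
The plan is to argue by contradiction. Since Lemma \ref{lem:ex-blow-up} provides a uniform upper bound $r_j \leq r_{\max}$, the case that must be excluded (after passing to a subsequence) is $r_j \to 0$. Assuming this, I would first invoke Corollary \ref{cor:cor1-rj0} to extract $T < \infty$, $\A(f(t)) \to 0$ and $\W(f(t)) \to 4\pi$ as $t \nearrow T$, and then apply Proposition \ref{prop:prop1-rj0} to obtain subsequential smooth convergence to round spheres along arbitrary $\tau_j \nearrow T$, together with the sign
\begin{equation}
  \int_{\S^2} |A^0|^2 H \dd\mu \Big|_t \geq 0 \quad \text{for $t$ near $T$}.
\end{equation}
Since the relevant preceding machinery is formulated for $\S^2$, I work throughout in the setting $\Sigma = \S^2$.

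The central computation would be the volume evolution. Using \Cref{prop:ev-eq}, the fact that $\int_{\S^2} \Delta H \dd\mu = 0$ on a closed surface, and Gauss--Bonnet combined with $K = \tfrac14 H^2 - \tfrac12 |A^0|^2$ (which yields $\tfrac12 \int H^2 \dd\mu - \int |A^0|^2 \dd\mu = 8\pi$), one obtains
\begin{equation}
\frac{\dd}{\dd t} \V(f(t)) = \int_{\S^2} |A^0|^2 H \dd\mu \;+\; 8\pi c_0 \;-\; \bigl(\lambda + \tfrac12 c_0^2\bigr) \int_{\S^2} H \dd\mu.
\end{equation}
The remaining ingredient is $\int_{\S^2} H \dd\mu \to 0$ as $t \nearrow T$: for any $\tau_j \nearrow T$, \Cref{prop:prop1-rj0} yields $\tilde r_j \to 0$, $\tilde x_j \in \R^3$, and smooth convergence (along a subsequence) of $\hat f_j := (f(\tau_j) - \tilde x_j)/\tilde r_j$ to a round sphere. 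The scaling identity $\int_{\S^2} H_{f(\tau_j)} \dd\mu_{f(\tau_j)} = \tilde r_j \int_{\S^2} H_{\hat f_j} \dd\hat\mu_j$ together with boundedness of the right-hand factor then forces $\int H \dd\mu \to 0$ along the subsequence, and the standard subsequence principle promotes this to the full limit.

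Substituting these inputs into the volume evolution would give $\frac{\dd}{\dd t} \V(f(t)) \geq 4\pi c_0 > 0$ on some left-neighborhood $[t_1, T)$. But the isoperimetric inequality together with $\A(f(t)) \to 0$ forces $\V(f(t)) \to 0$, while the orientation choice \eqref{eq:choice-of-orientation} ensures $\V(f(t)) \geq 0$. A non-negative function on $[t_1, T)$ with $T < \infty$ whose derivative is uniformly bounded below by a positive constant cannot tend to $0$ at $T$, which is the desired contradiction.

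The main obstacle I anticipate is promoting the subsequential convergence $\int H \dd\mu \to 0$ to the full limit, so that the slope bound holds uniformly on a left-neighborhood of $T$; once this is secured, the rest of the argument is a clean combination of Gauss--Bonnet, \Cref{prop:prop1-rj0}, and the isoperimetric inequality.
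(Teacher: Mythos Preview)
Your approach is essentially the one in the paper: assume $r=0$, invoke \Cref{cor:cor1-rj0} and \Cref{prop:prop1-rj0}, compute the volume evolution via Gauss--Bonnet, use the sign \eqref{eq:a0sqH-has-a-sign}, and derive a contradiction with the isoperimetric inequality. The one place you diverge is in controlling $\int_{\S^2} H\dd\mu$: the paper simply bounds $\bigl|\int_{\S^2} H\dd\mu\bigr| \leq \sqrt{4\W(f(t))}\sqrt{\A(f(t))} \leq \sqrt{32\pi}\sqrt{\A(f(t))}$ by Cauchy--Schwarz and \eqref{eq:will-below-8pi}, which gives the full limit $t\nearrow T$ immediately and makes the subsequence-promotion step you flag as the ``main obstacle'' unnecessary. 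Your route via the scaling identity and the convergence in \Cref{prop:prop1-rj0} is correct (with $\tilde r_j\to 0$ following from $\A(f(\tau_j))=\tilde r_j^2\A(\hat f_j)\to 0$ and $\A(\hat f_j)\to\A(\hat f)>0$), just longer.
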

\begin{proof}
  For the sake of contradiction, suppose that $r_j\to r=0$. Using \cref{prop:prop1-rj0}, especially \eqref{eq:a0sqH-has-a-sign}, one finds for $t\in [t_0,T)$, with a similar computation as in \Cref{cor:cor1-rj0} and using \eqref{eq:will-below-8pi},
  \begin{align}
    \frac{\dd}{\dd t}\V(f(t)) &= 8\pi c_0 + \int_{\S^2}|A^0|^2H\dd\mu - (\frac12c_0^2+\lambda)\int_{\S^2}H\dd\mu\\
    &\geq 8\pi c_0 - (\frac12c_0^2+\lambda)\sqrt{32\pi} \sqrt{\A(f(t))}
  \end{align}
  so that, using $\A(f(t))\to 0$ for $t\nearrow T$ by \Cref{cor:cor1-rj0}, $\liminf_{t\nearrow T}\V(f(t))>0$. This however contradicts the isoperimetric inequality! 
\end{proof}

\section{Asymptotic stability of critical points}\label{sec:loja}

Again, in this section, $\Sigma$ is some compact, orientable and closed surface of genus $g\in\N_0$. One can prove the following \L ojasiewicz-Simon gradient inequality for the locally area--constrained Helfrich functional
\begin{equation}
  \H_{c_0,\lambda}(f)=\H_{c_0}(f) + \frac12\lambda\A(f)
\end{equation}
where $f\colon\Sigma\to\R^3$ is an immersion. Write
\begin{equation}
  \nabla\H_{c_0,\lambda}(f) = \nabla\H_{c_0}(f)+\frac12\lambda\nabla\A(f).
\end{equation}

\begin{theorem}\label{thm:loja}
  Let $c_0,\lambda\in\R$. If $f\colon\Sigma\to\R^3$ is a $(c_0,\lambda)$-Helfrich immersion, then there exist $C,r>0$ and $\theta\in(0,\frac12]$ such that, for all immersions $h\in W^{4,2}(\Sigma,\R^3)$ with $\|h-f\|_{W^{4,2}}\leq r$, one has
  \begin{equation}
    |\H_{c_0,\lambda}(f)-\H_{c_0,\lambda}(h)|^{1-\theta} \leq C \|\nabla\H_{c_0,\lambda}(h)\|_{L^2(\dd\mu_h)}.
  \end{equation}
\end{theorem}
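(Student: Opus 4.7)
The plan is to apply an abstract \L ojasiewicz-Simon gradient inequality of Chill's type, following the template already established for geometric functionals in \cite{rupp2023,rupp2024}. Since $\H_{c_0,\lambda}$ is invariant under reparametrization of $\Sigma$, the natural approach is to reduce the inequality to a statement about normal graphs over $f$. For $r>0$ sufficiently small and $h\in W^{4,2}(\Sigma,\R^3)$ with $\|h-f\|_{W^{4,2}}\leq r$, using the Sobolev embedding $W^{4,2}\hookrightarrow C^2$ and a standard tubular-neighborhood construction, one writes $h=(f+u\nu_f)\circ\Phi$ for a unique diffeomorphism $\Phi$ of $\Sigma$ and a normal coefficient $u\in W^{4,2}(\Sigma,\R)$ satisfying $\|u\|_{W^{4,2}}\lesssim \|h-f\|_{W^{4,2}}$. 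Since both sides of the claimed inequality are invariant under $\Phi$, it suffices to establish the inequality for immersions of the form $h_u\vcentcolon= f+u\nu_f$.

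Next I would verify the two hypotheses required by the abstract theorem for the reduced functional $E(u)\vcentcolon=\H_{c_0,\lambda}(h_u)$, defined on an open neighborhood of $0$ in $W^{4,2}(\Sigma,\R)$. First, $E$ is real-analytic: the pull-back metric $g_{h_u}$, its inverse, the area form $\sqrt{\det g_{h_u}}$, the normal $\nu_{h_u}$ and the mean curvature $H_{h_u}$ all depend real-analytically on $u,\nabla u,\nabla^2 u$ as long as $h_u$ remains an immersion, which is ensured by taking $\|u\|_{W^{4,2}}$ small. Secondly, since $f$ is a critical point of $\H_{c_0,\lambda}$ one has $E'(0)=0$, and by \Cref{prop:ev-eq} a direct linearization shows that the second variation $E''(0)\colon W^{4,2}(\Sigma,\R)\to L^2(\Sigma,\R)$ has principal part $\tfrac12\Delta_f^2$, arising from the linearization of $\Delta H$ in $\nabla\H_{c_0}$. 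This is a formally self-adjoint, fourth-order elliptic operator on the closed surface $\Sigma$, hence $E''(0)$ is Fredholm of index zero. Note that the kernel may be nontrivial --- e.g. coming from the action of $\R^3$-translations --- but this does not obstruct the Fredholm property.

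With these two hypotheses in place, Chill's abstract \L ojasiewicz-Simon gradient inequality, in the formulation used in \cite[Section~5]{rupp2024}, yields constants $C',r'>0$ and $\theta\in(0,\tfrac12]$ with
\[
  |E(0)-E(u)|^{1-\theta}\leq C'\|\nabla E(u)\|_{L^2(\dd\mu_{h_u})}
\]
for all $u\in W^{4,2}(\Sigma,\R)$ with $\|u\|_{W^{4,2}}\leq r'$. Translating back through the reparametrization from Step~1 --- and observing that $\nabla\H_{c_0,\lambda}$ is purely normal, so the reduced gradient $\nabla E(u)$ coincides with $\langle\nabla\H_{c_0,\lambda}(h_u),\nu_{h_u}\rangle$ up to a factor bounded away from $0$ and $\infty$ for $\|u\|_{W^{4,2}}$ small --- one obtains the claimed inequality, possibly after shrinking $r$ further.

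The main obstacle by a clear margin is Step~1: the rigorous passage to normal graphs at the $W^{4,2}$-level. This requires showing that the map $(u,\Phi)\mapsto (f+u\nu_f)\circ\Phi$ is a local diffeomorphism at $(0,\mathrm{Id})$ between suitable Banach manifolds and verifying bi-Lipschitz control between $\|h-f\|_{W^{4,2}}$ and $\|u\|_{W^{4,2}}$. This is by now a well-developed construction in the Willmore-type literature, but still needs careful bookkeeping here to ensure that both sides of the final inequality --- energy difference and $L^2$-gradient norm --- are appropriately comparable on the two sides of the gauge reduction.
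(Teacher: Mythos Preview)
Your proposal is correct and follows essentially the same route the paper indicates: the paper does not give a detailed proof but simply observes that $\nabla\H_{c_0,\lambda}$ differs from $\nabla\W_0$ only by lower-order terms and refers to \cite[Theorem~3.1]{chillfasangovaschaetzle2009} (and \cite{lengeler2018} for the case $\lambda=0$) for the analogous argument. Your outline --- normal-graph reduction, analyticity of the reduced functional, Fredholm property of the linearization with principal part $\tfrac12\Delta_f^2$, and application of an abstract Chill-type inequality --- is exactly that template, so there is nothing substantive to add.
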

As $\nabla\H_{c_0,\lambda}(f)$ only differs from $\nabla\W_0(f)$ by some lower-order terms, \Cref{thm:loja} is proved analogously to \cite[Theorem 3.1]{chillfasangovaschaetzle2009}. Note that, in \cite{lengeler2018}, using these arguments, a version of \Cref{thm:loja} is already established for embedded surfaces in the case where $\lambda=0$. Therefore, a detailed proof of \Cref{thm:loja} can be safely omitted. 

As in \cite[Lemma~4.1]{chillfasangovaschaetzle2009}, \cite[Lemma~7.9]{rupp2023} and \cite[Lemma~5.8]{rupp2024}, one uses \Cref{thm:loja} to deduce the following

\begin{lemma}\label{lem:as-stab}
  Let $f_W\colon\Sigma\to\R^3$ be a $(c_0,\lambda)$-Helfrich immersion and $k\in\N$ with $k\geq 4$, $\delta>0$. Then there exists $\varepsilon>0$ depending on $f_W$ such that, if $f\colon[0,T)\times\Sigma\to\R^3$ is a maximal $(c_0,\lambda)$-Helfrich flow starting in $f_0$ satisfying
  \begin{enumerate}[(i)]
    \item $\|f_0-f_W\|_{C^{k,\alpha}}<\varepsilon$ for some $\alpha\in(0,1)$ and
    \item $\H_{c_0,\lambda}(f(t))\geq\H_{c_0,\lambda}(f_W)$ whenever $\|f(t)\circ\Phi(t)-f_W\|_{C^k}\leq\delta$, for certain diffeomorphisms $\Phi(t)\colon\Sigma\to\Sigma$,
  \end{enumerate}
  then $T=\infty$ and there are diffeomorphisms $\widetilde{\Phi(t)}\colon\Sigma\to\Sigma$ such that $f(t)\circ\widetilde{\Phi(t)}$ converges smoothly to a $(c_0,\lambda)$-Helfrich immersion $f_{\infty}$ with $\H_{c_0,\lambda}(f_{\infty})=\H_{c_0,\lambda}(f_W)$.
\end{lemma}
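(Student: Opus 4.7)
The plan is to run the Łojasiewicz-Simon convergence scheme of \cite[Lemma~4.1]{chillfasangovaschaetzle2009}, as adapted to a reparametrization-invariant geometric flow in \cite[Lemma~7.9]{rupp2023} and \cite[Lemma~5.8]{rupp2024}. First I would use local existence from \Cref{rem:well-posedness} together with the parabolic smoothing built into \Cref{prop:life} and \Cref{prop:ho-en-est} to arrange, for $\varepsilon$ sufficiently small, that on a short initial time interval the flow $f(t)$ remains $C^{k,\alpha}$-close to $f_W$ and can be written, after composition with smooth time-dependent diffeomorphisms $\Phi(t)\colon\Sigma\to\Sigma$ obtained via the tubular-neighborhood projection onto $f_W(\Sigma)$, as a normal graph over $f_W$. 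This gauge choice is what brings the reparametrization-invariant flow into the range of the $W^{4,2}$-based Łojasiewicz-Simon inequality \Cref{thm:loja}.

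Next, set
\begin{equation}
T^\ast = \sup\{\,t\in[0,T) : \|f(s)\circ\Phi(s)-f_W\|_{C^k}\leq\delta\ \text{for all}\ s\in[0,t]\,\},
\end{equation}
which is positive by the previous step. On $[0,T^\ast)$, hypothesis~(ii) together with \Cref{rem:en-dec} forces $t\mapsto\H_{c_0,\lambda}(f(t))$ to be non-increasing and bounded below by $\H_{c_0,\lambda}(f_W)$, hence to admit a limit $\H^\ast\geq\H_{c_0,\lambda}(f_W)$. Since $\H_{c_0,\lambda}$ and $\|\nabla\H_{c_0,\lambda}\|_{L^2(\dd\mu)}$ are reparametrization-invariant and the flow equation reads $\partial_tf=-2\nabla\H_{c_0,\lambda}(f)$, applying \Cref{thm:loja} to $h(t)=f(t)\circ\Phi(t)$ (which is $W^{4,2}$-close to $f_W$ via the embedding $C^{k,\alpha}\hookrightarrow W^{4,2}$ for $k\geq 4$) gives
\begin{equation}
(\H_{c_0,\lambda}(f(t))-\H^\ast)^{1-\theta}\leq C\,\|\partial_tf(t)\|_{L^2(\dd\mu_{f(t)})} \quad\text{on}\ [0,T^\ast).
\end{equation}
Combined with $-\frac{\dd}{\dd t}\H_{c_0,\lambda}(f(t))=\frac{1}{2}\|\partial_tf\|_{L^2(\dd\mu)}^2$ from \Cref{rem:en-dec}, differentiating $(\H_{c_0,\lambda}(f(t))-\H^\ast)^{\theta}$ and integrating in time yields the decisive estimate
\begin{equation}
\int_0^{T^\ast}\|\partial_tf(t)\|_{L^2(\dd\mu_{f(t)})}\,\dd t\leq C\big(\H_{c_0,\lambda}(f_0)-\H^\ast\big)^{\theta},
\end{equation}
which can be made arbitrarily small by choosing $\varepsilon$ small.

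This $L^1$-in-time velocity bound, interpolated against the uniform higher-order curvature estimates of \Cref{prop:ho-en-est} (applicable on $[0,T^\ast)$ since no curvature concentration can occur in a $C^k$-neighborhood of the smooth immersion $f_W$), upgrades to Cauchyness of $f(t)\circ\Phi(t)$ in $C^k$ as $t\nearrow T^\ast$. A standard contradiction argument shows that, for $\varepsilon$ sufficiently small, the resulting limit lies strictly inside the open $\delta$-ball around $f_W$, forcing $T^\ast=T$; the uniform curvature control also rules out finite-time singularities via \Cref{prop:life}, so $T=\infty$. Parabolic smoothing then upgrades $C^k$- to $C^\infty$-convergence of $f(t)\circ\widetilde{\Phi(t)}\to f_\infty$ after an adjustment of the diffeomorphisms, and passing to the limit in the flow equation along a subsequence $t_j\to\infty$ with $\|\partial_tf(t_j)\|_{L^2}\to 0$ (which exists by the $L^1$-bound) shows that $f_\infty$ is a $(c_0,\lambda)$-Helfrich immersion. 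Finally, applying \Cref{thm:loja} at $f_W$ to $h=f_\infty$, which is $W^{4,2}$-close to $f_W$ and satisfies $\nabla\H_{c_0,\lambda}(f_\infty)=0$, directly yields $\H_{c_0,\lambda}(f_\infty)=\H_{c_0,\lambda}(f_W)$.

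The main obstacle is the global-in-time \textbf{reparametrization construction}: the flow evolves along $\nu_{f(t)}$ rather than $\nu_{f_W}$, so writing $f(t)\circ\Phi(t)$ as a normal graph over $f_W$ uniformly in $t\in[0,T^\ast)$ requires carefully constructing $\Phi(t)$ via the tubular projection, verifying its smooth dependence on $t$, and controlling both the $C^k$- and $W^{4,2}$-norms of $h(t)-f_W$ by those of $f(t)-f_W$. A secondary difficulty is maintaining the $W^{4,2}$-smallness required by \Cref{thm:loja} throughout $[0,T^\ast)$, which is closed by combining the quantitative smoothing of \Cref{prop:ho-en-est} with the $L^1$-in-time velocity bound derived above.
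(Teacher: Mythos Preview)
Your proposal is correct and follows exactly the approach the paper indicates: the paper does not give its own proof of this lemma but simply states that it follows from \Cref{thm:loja} by the arguments of \cite[Lemma~4.1]{chillfasangovaschaetzle2009}, \cite[Lemma~7.9]{rupp2023} and \cite[Lemma~5.8]{rupp2024}, which is precisely the \L ojasiewicz-Simon convergence scheme you outline. Your sketch is in fact more detailed than what the paper provides, and the technical obstacles you flag (the reparametrization/normal-graph construction and maintaining $W^{4,2}$-smallness) are exactly those handled in the cited references.
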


Finally, with \Cref{lem:as-stab} at our disposal, we can prove the following result which yields convergence if the concentration limit of a Helfrich flow is compact and if $r>0$ in \Cref{prop:conc-limit}. Note that here we do not yet make the assumption $g=0$, i.e. restricting the analysis to spherical immersions.

\begin{theorem}\label{thm:as-stab}
  Let $c_0>0$ and $\lambda > 0$. Let $f\colon[0,T)\times\Sigma\to\R^3$ be a maximal $(c_0,\lambda)$-Helfrich flow on a compact, closed and oriented surface $\Sigma$ of genus $g\in\N_0$ and $\hat{f}\colon\hat{\Sigma}\to\R^3$ a concentration limit as in \Cref{prop:conc-limit} with $r>0$. If $\hat{\Sigma}$ has a compact component, then $T=\infty$ and, as $t\to\infty$, the flow converges smoothly after reparametrization to a $(c_0,\lambda)$-Helfrich immersion $f_{\infty}$ with $\H_{c_0,\lambda}(f_{\infty})=\H_{c_0,\lambda}(\hat{f})$.
\end{theorem}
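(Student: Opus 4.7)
The strategy is to reduce to the \L ojasiewicz--Simon based \Cref{lem:as-stab}, applied to the Helfrich flow restarted at the blow-up time $t_j + r_j^4\tilde c$, with the candidate critical point obtained by rescaling the concentration limit $\hat f$ back to the original scale.

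First, I would establish that the compact-component hypothesis forces $\hat\Sigma$ itself to be diffeomorphic to $\Sigma$. By the Langer-type convergence from \Cref{prop:conc-limit}, there exist diffeomorphisms $\phi_j\colon\hat\Sigma(j)\to U_j\subseteq\Sigma$ with $\hat f_j\circ\phi_j=\hat f+u_j$ on $\hat\Sigma(j)$ and $\|\hat\nabla^m u_j\|_\infty\to 0$. If $\hat\Sigma_0\subseteq\hat\Sigma$ is the given compact component, then $\hat\Sigma_0\subseteq\hat\Sigma(j)$ for $j$ large, so $\phi_j(\hat\Sigma_0)$ is a nonempty, open, and compact subset of the connected closed surface $\Sigma$; hence $\phi_j(\hat\Sigma_0)=\Sigma$. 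Injectivity of $\phi_j$ then prevents $\hat\Sigma(j)\setminus\hat\Sigma_0$ from being nonempty, so $\hat\Sigma=\hat\Sigma_0$ is compact and $\phi_j$ is a global diffeomorphism onto $\Sigma$.

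Next, I would rescale back to the original scale. Since $\hat f$ is an $(rc_0,r^2\lambda)$-Helfrich immersion by \Cref{prop:conc-limit}(iii), the parabolic rescaling of \Cref{lem:par-scal} (applied in reverse) shows that $f_W\defeq r\hat f$ is a $(c_0,\lambda)$-Helfrich immersion on $\hat\Sigma$, with
\begin{equation}
  \H_{c_0,\lambda}(f_W) = \H_{rc_0,r^2\lambda}(\hat f),
\end{equation}
which is the energy value the theorem asks us to reach.

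Then I would verify the hypotheses of \Cref{lem:as-stab} for the reparametrized, translated, restarted flow
\begin{equation}
  g^{(j)}(t,p)\defeq f\bigl(t_j+r_j^4\tilde c+t,\,\phi_j(p)\bigr)-x_j,
\end{equation}
which is a $(c_0,\lambda)$-Helfrich flow on $\Sigma$ by translation and reparametrization invariance. At $t=0$ one has $g^{(j)}(0) = r_j(\hat f+u_j)$, so using $r_j\to r$ and the smooth decay $u_j\to 0$ on the now-compact $\hat\Sigma=\Sigma$, we obtain $\|g^{(j)}(0)-f_W\|_{C^{k,\alpha}}\to 0$ as $j\to\infty$, delivering hypothesis~(i) of \Cref{lem:as-stab} for $j$ sufficiently large. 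For hypothesis~(ii), \Cref{prop:conc-limit}(ii) gives $\H_{c_0,\lambda}(f_W)\leq\lim_{s\nearrow T}\H_{c_0,\lambda}(f(s))$, and combining this with the unconditional energy decay of \Cref{rem:en-dec} yields
\begin{equation}
  \H_{c_0,\lambda}(g^{(j)}(t)) \;\geq\; \lim_{s\nearrow T}\H_{c_0,\lambda}(f(s)) \;\geq\; \H_{c_0,\lambda}(f_W)
\end{equation}
for every $t$ in the life-span of $g^{(j)}$, so hypothesis~(ii) holds trivially for any $\delta>0$. \Cref{lem:as-stab} then produces global existence of $g^{(j)}$ --- hence $T=\infty$ for the original flow --- and smooth convergence of $g^{(j)}(t)\circ\widetilde\Phi(t)$ as $t\to\infty$ to a $(c_0,\lambda)$-Helfrich immersion $f_\infty$ with $\H_{c_0,\lambda}(f_\infty)=\H_{c_0,\lambda}(f_W)=\H_{rc_0,r^2\lambda}(\hat f)$. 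Composing with $\phi_j$ and translating back by $+x_j$ transfers the convergence to the original flow $f$.

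The principal obstacle is the topological identification in the first step: extracting a global diffeomorphism $\hat\Sigma\cong\Sigma$ from the sole existence of a compact component, since a priori the Langer-type convergence only provides local diffeomorphisms into $\Sigma$. Once this identification is secured, the remainder is a clean application of the \L ojasiewicz--Simon machinery, with the welcome feature that condition~(ii) of \Cref{lem:as-stab} is delivered for free by the gradient-flow structure --- no smallness assumption on the initial energy is needed at this stage.
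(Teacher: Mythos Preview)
Your proof is correct and follows essentially the same strategy as the paper: identify $\hat\Sigma\cong\Sigma$ from the compact-component hypothesis (the paper cites \cite[Lemma~4.3]{kuwertschaetzle2001} for this, but your open-and-closed argument is equivalent), then feed the restarted flow into the \L ojasiewicz--Simon stability \Cref{lem:as-stab} with hypothesis~(ii) supplied by energy decay and \Cref{prop:conc-limit}(ii). The only cosmetic difference is that you rescale the critical point back to the original $(c_0,\lambda)$-scale via $f_W=r\hat f$, whereas the paper rescales the flow to the $(rc_0,r^2\lambda)$-scale and applies \Cref{lem:as-stab} there --- the two are equivalent by \Cref{lem:par-scal}, and your version arguably handles the $r_j\to r$ discrepancy a touch more cleanly.
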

\begin{proof}[Proof of \Cref{thm:as-stab}.]
  Let $(x_j)_{j\in\N}$, $t_j\nearrow T$ and $r_j\to r\in(0,\infty)$ be as in \Cref{prop:conc-limit}. Since $\hat{\Sigma}$ has a compact component, one argues as in \cite[Lemma~4.3]{kuwertschaetzle2001} that $\hat{\Sigma}$ is connected and diffeomorphic to $\Sigma$. So, using (i) in \Cref{prop:conc-limit}, we may w.l.o.g. take $\hat{\Sigma}=\Sigma$ and there are diffeomorphisms $\Phi_j\colon\Sigma\to\Sigma$ such that $\hat{f}_j\circ\Phi_j\to\hat{f}$ smoothly on $\Sigma$. By (iii) in \Cref{prop:conc-limit}, $\hat{f}$ is an $(rc_0,r^2\lambda)$-Helfrich immersion. Therefore, we can fix $\varepsilon=\varepsilon(\hat{f})$ as in \Cref{lem:as-stab}.

  Since $r>0$, $r_j\to r$ and $\hat{f}_j\circ\Phi_j\to\hat{f}$, there exists $j_0\in\N$ with
  \begin{equation}\label{eq:pf-as-stab-2}
    \frac{|r_{j_0}-r|}{r} \|\hat{f}_{j_0}\circ\Phi_{j_0}\|_{C^{4,\alpha}}<\frac{\varepsilon}{2},\quad \|\hat{f}_{j_0}\circ\Phi_{j_0}-\hat{f}\|_{C^{4,\alpha}}<\frac{\varepsilon}{2}.
  \end{equation}  
  Using \Cref{lem:par-scal}, consider now the re-scaled $(rc_0,r^2\lambda)$-Helfrich flow given by
  \begin{equation}
    \widetilde{f}_{j_0} = \frac{1}{r}\Bigl(f(t_{j_0}+r^4\ \cdot,\cdot)-x_{j_0}\Bigr)\circ\Phi_{j_0}\colon\Big[0,\frac{T-t_{j_0}}{r^4}\Big)\times\Sigma\to\R^3.
  \end{equation}
  Particularly, for any $0\leq t<(T-t_{j_0})/r^4$, using \Cref{rem:en-dec}, one finds
  \begin{align}
    \H_{rc_0,r^2\lambda}(\widetilde{f}_{j_0}(t)) &\geq \lim_{s\nearrow (T-t_{j_0})/r^4}\H_{rc_0,r^2\lambda}(\widetilde{f}_{j_0}(s)) = \lim_{s\nearrow T} \H_{c_0,\lambda}(f(s)) \\
    &= \lim_{j\to\infty}\H_{c_0,\lambda}(f(t_j+r_j^4\tilde{c}))=\lim_{j\to\infty} \H_{r_jc_0,r_j^2\lambda}(\hat{f}_j\circ\Phi_j) = \H_{rc_0,r^2\lambda}(\hat{f}),
  \end{align}
  repeatedly using \Cref{lem:par-scal} as well as $f_j(\tilde{c})=\hat{f}_j$ and the smooth convergence $\hat{f}_j\circ\Phi_j\to\hat{f}$. Since $\hat{f}_{j_0}=\frac{1}{r_{j_0}}(f(t_{j_0}+r_{j_0}^4\tilde{c})-x_{j_0})$, \eqref{eq:pf-as-stab-2} yields
  \begin{equation}
    \|\widetilde{f}_{j_0}(r_{j_0}^4/r^4\tilde{c})-\hat{f}\|_{C^{4,\alpha}} <\varepsilon.
  \end{equation}
  Thus, by \Cref{lem:as-stab}, the flow $\widetilde{f}_{j_0}$ exists globally and converges after reparametrization by appropriate diffeomorphisms to an $(rc_0,r^2\lambda)$-Helfrich immersion $f_{\infty}$ with $\H_{rc_0,r^2\lambda}(f_{\infty})=\H_{rc_0,r^2\lambda}(\hat{f})$. The theorem is proved after again applying \Cref{lem:par-scal}.
\end{proof}

\section{Proof of the main results}\label{sec:proof-main}

Following the strategy of \cite[proof of Theorem 5.2]{kuwertschaetzle2004}, using the above findings, we can carry out the proof of the main results.

\begin{proof}[Proof of \Cref{thm:main}.]
  First, by \eqref{eq:en-thresh}, the case $\lambda=0$ is trivial. Indeed, if $\lambda=0$, then $\H_{c_0}(f_0)=0$, so particularly $f_0$ parametrizes a round sphere of radius $2/c_0$ due to a result by Hopf, cf. \cite[Theorem 2.1 in Chapter VI]{hopf1983}. Particularly, the $(c_0,0)$-Helfrich flow is stationary and thus exists globally. Therefore, we may suppose that $\lambda>0$.
  
  Second, again using \eqref{eq:en-thresh}, one can w.l.o.g. suppose that, for some $\varepsilon>0$,
  \begin{equation}\label{eq:pf-main-1}
    \H_{c_0}(f_0)+\frac12\lambda\A(f_0)=K\leq\frac{2\lambda}{2\lambda+c_0^2}8\pi-\varepsilon.
  \end{equation}
  Indeed, to justify this assumption, one distinguishes the two cases where $f_0$ is or is not a $(c_0,\lambda)$-Helfrich immersion. In the former case, the flow is stationary and the statement of the theorem is trivial. In the latter case, \Cref{rem:en-dec} yields that \eqref{eq:pf-main-1} is satisfied if we replace $f_0$ by $f(\delta)$ for any $\delta\in(0,T)$ --- which we may w.l.o.g. do since the statement of the theorem concerns only the asymptotic behavior of $f$ for $t\nearrow T$.

  Consider a concentration limit $\hat{f}\colon\hat{\Sigma}\to\R^3$ as constructed in \Cref{prop:conc-limit}. If $\hat{\Sigma}$ is compact, the claim follows from \Cref{thm:as-stab}, using \Cref{cor:cor2-rj0}.

  Suppose that $\hat{\Sigma}$ is not compact. Arguing as in \cite[Lemma~6.1]{rupp2024}, one finds $\A(\hat{f}_j)\to\infty$ for $j\to\infty$. \Cref{prop:conc-limit} then yields that $\hat{f}$ is a Willmore immersion. Moreover, by \eqref{eq:will-below-8pi} and \eqref{eq:pf-main-1}, one finds for some $\beta>0$
  \begin{equation}
    \W(f(t)) \leq 8\pi-\beta\quad\text{for all $0\leq t<T$}.
  \end{equation}
  Using the first statement in (ii) in \Cref{prop:conc-limit}, \Cref{lem:kusch-rem-pt-sing} yields that $\hat{f}$ parametrizes a round sphere --- a contradiction! 
\end{proof}

\begin{proof}[Proof of \Cref{prop:neg-result}.]
  The upper bound on the maximal existence time $T$ is obtained in \Cref{cor:fin-time-sing}. By the choice of the constants $C$ and $\alpha_1$ in \Cref{cor:fin-time-sing}, \Cref{lem:sm-en,prop:pos-av-mc-for-sm-en} yield that $\int_{\S^2}H\dd\mu>0$ on $[0,T)$ and, also using \eqref{eq:int-a0sq}, 
  \begin{equation}
    \W(f(t))\leq \frac{1}{2}\big(8\pi+\W_0(f(t))\big)\leq \frac{1}{2}(8\pi+\alpha_1)<8\pi
  \end{equation}
  for all $t\in[0,T)$. Thus, \Cref{prop:spherical-shrinker} implies the second part of the claim.
\end{proof}
\appendix

\section{Higher-order interpolation estimates}\label{app:ho-int}

Following \cite{kuwertschaetzle2001,kuwertschaetzle2002}, denote by $\phi*\psi$ any multilinear form which depends on $\phi$ and $\psi$ in a blilinear way, where $\phi$ and $\psi$ are tensors on $\Sigma$. Particularly, $|\phi*\psi|\leq C|\phi||\psi|$ for a universal constant $C>0$ and $\nabla(\phi*\psi)=\nabla\phi*\psi+\phi*\nabla\psi$. Further, for $m\in\N_0$ and $r\geq 2$, denote by $P_r^m(A)$ any term of type
\begin{equation}
  P_r^m(A)=\sum_{i_1+\dots+i_r=m} \nabla^{i_1}A*\dots*\nabla^{i_r}A.
\end{equation} 
Further, $P_1^m(A)$ denotes a contraction of the tensor $\nabla^mA$ with respect to the metric $g$. One essentially proceeds as in \cite[Propositions~3.3~and~4.5]{kuwertschaetzle2002} to prove the following proposition. In order to keep track of the dependence of the constants on the parameters $c_0$ and $\lambda$ (cf. \Cref{rem:dep-c0-lam}), we include the details for the reader's convenience. Note that the estimates for the dependence on $\lambda$ are very similar to the version of \cite[Propositions~3.3~and~4.5]{kuwertschaetzle2002} proved in \cite[Appendix~B]{rupp2024}. 

\begin{proposition}\label{prop:ho-est}
  Let $c_0,\lambda\in\R$ and $f\colon[0,T)\times\Sigma\to\R^3$ be a $(c_0,\lambda)$-Helfrich flow. Further, consider $\gamma$ as in \eqref{eq:gamma}. For $m\in\N_0$, $s\geq 2m+4$ and $\phi=\nabla^mA$, one finds
  \begin{align}
    \frac{\dd}{\dd t}\int_{\Sigma}|\phi|^2\gamma^s\dd\mu + \ & \frac12\int_{\Sigma} |\nabla^2\phi|^2\gamma^s\dd\mu \leq C\bigl(c_0^4+\lambda^2+\|A\|^4_{L^{\infty}(\{\gamma>0\})}\bigr) \int_{\Sigma}|\phi|^2\gamma^s\dd\mu  \\
    &+ C\bigl(1+c_0^4+\lambda^2+\|A\|^4_{L^{\infty}(\{\gamma>0\})}\bigr)\int_{\{\gamma>0\}}|A|^2\dd\mu\label{eq:ho-est}
  \end{align}
  for some $C=C(s,\Lambda)$ with $\Lambda$ as in \eqref{eq:gamma}.
\end{proposition}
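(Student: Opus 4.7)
The plan is to mimic the proof of \cite[Propositions~3.3~and~4.5]{kuwertschaetzle2002} (and its localized variant in \cite[Appendix~B]{rupp2024}), working in the $P_r^m(A)$-calculus while keeping precise track of the new terms coming from $c_0$ and $\lambda$. First I would derive the evolution equation of $\phi = \nabla^m A$ along \eqref{eq:flow-eq}. Writing the normal speed schematically as
\begin{equation}
\xi = -\Delta H + P_3^0(A) + c_0\, P_2^0(A) + (\lambda+c_0^2)\, P_1^0(A),
\end{equation}
combining the general identity $\partial_t \nabla^m A = \nabla^{m+2}\xi + \text{(commutator terms)}$ with Codazzi $\nabla H = P_1^1(A)$ then yields the schematic evolution
\begin{equation}
\partial_t \nabla^m A + \Delta^2 \nabla^m A = P_3^{m+2}(A) + c_0\, P_2^{m+2}(A) + (\lambda+c_0^2)\, P_1^{m+2}(A) + R,
\end{equation}
where $R$ collects lower-order remainders carrying the same $c_0,\lambda$ dependence as the leading contributions.

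Next I would test this identity against $\phi\gamma^s$ and integrate over $\Sigma$. Two integrations by parts convert the bi-Laplacian contribution into $\int |\nabla^2\phi|^2\gamma^s d\mu$ up to boundary-type terms in which derivatives fall on $\gamma^s$; these are absorbed using $s \geq 2m+4$ and \eqref{eq:gamma}, exactly as in \cite[Appendix~B]{rupp2024}. The pure-curvature term $\int \phi * P_3^{m+2}(A)\,\gamma^s d\mu$ is handled verbatim as in \cite[Propositions~3.3 and 4.5]{kuwertschaetzle2002}, giving contributions controlled by $C \|A\|^4_{L^\infty(\{\gamma>0\})}\int |\phi|^2\gamma^s d\mu$ together with $C(1+\|A\|^4_{L^\infty(\{\gamma>0\})})\int_{\{\gamma>0\}} |A|^2 d\mu$.

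The main obstacle is arranging the new $c_0$- and $\lambda$-terms so that they enter the final estimate only through $c_0^4+\lambda^2$. For the linear term $(\lambda+c_0^2)\int \phi * P_1^{m+2}(A)\,\gamma^s d\mu$, I would integrate by parts twice to reduce its leading part to the schematic form $(\lambda+c_0^2)\int \phi * \nabla^2 \phi\,\gamma^s d\mu$ modulo errors with derivatives on $\gamma^s$. Young's inequality then yields
\begin{equation}
(\lambda+c_0^2)\int \phi * \nabla^2\phi\,\gamma^s d\mu \leq \tfrac{1}{8}\int |\nabla^2\phi|^2\gamma^s d\mu + C(\lambda^2+c_0^4)\int |\phi|^2\gamma^s d\mu,
\end{equation}
producing the required $\lambda^2+c_0^4$ dependence. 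The quadratic $c_0\,P_2^{m+2}(A)$-term is treated analogously: after integration by parts one estimates its integrand pointwise by $|c_0|\,|A|\,|\phi|\,|\nabla^2\phi|$ plus lower-order factors and distributes the weight by Young between $\tfrac{1}{8}|\nabla^2\phi|^2\gamma^s$, $c_0^4|\phi|^2\gamma^s$, and $\|A\|^4_{L^\infty(\{\gamma>0\})}|\phi|^2\gamma^s$. The remaining contributions from $R$ and from derivatives falling on $\gamma^s$ are absorbed through the interpolation estimates of \cite[Proposition~4.5]{kuwertschaetzle2002} in the spirit of \cite[Appendix~B]{rupp2024}, completing \eqref{eq:ho-est}.
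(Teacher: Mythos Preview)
Your proposal is correct and follows essentially the same route as the paper: write $\xi=-\Delta H+P_3^0(A)+(\lambda+c_0^2)P_1^0(A)+c_0P_2^0(A)$, derive the schematic evolution for $\nabla^mA$, apply the localized energy identity of \cite[Lemma~3.2]{kuwertschaetzle2002}, and then absorb the new $(\lambda+c_0^2)$- and $c_0$-terms via Young and the interpolation inequalities of \cite[Corollary~5.5, Lemma~5.1]{kuwertschaetzle2002} so that only $c_0^4+\lambda^2$ survives. Two small remarks: first, for $(\lambda+c_0^2)\int P_1^{m+2}(A)*\phi\,\gamma^s\dd\mu$ no integration by parts is needed since $P_1^{m+2}(A)$ is already $\nabla^2\phi$ up to commutators, so Young applies directly as in the paper; second, the lower-order remainders you collect in $R$ (namely $P_5^m$, $(\lambda+c_0^2)P_3^m$, $c_0P_4^m$) genuinely require \cite[Corollary~5.5]{kuwertschaetzle2002} with the specific choices $r=4,5,6$ and the paper spells these out explicitly, but your reference to the interpolation estimates covers this.
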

\begin{proof}
  In the following estimates, constants $C$ may change from line to line and depend on $s$ and $\Lambda$. Proceeding as in \cite[Proposition~2.4]{kuwertschaetzle2002}, also cf. \cite[Lemma~B.1]{rupp2024}, using $\partial_tf=\xi\nu$ with
  \begin{equation}\label{eq:ho-est-0}
    \xi=-\Delta H + P_3^0(A) + (\lambda+c_0^2)P_1^0(A) + c_0P_2^0(A),
  \end{equation}
  one finds for $m\in\N_0$
  \begin{align}
    \partial_t(\nabla^mA) &+\Delta^2(\nabla^m A)= P_3^{m+2}(A) + P_5^m(A) \\
    &+ (\lambda+c_0^2)(P_1^{m+2}(A)+P_3^m(A)) + c_0(P_2^{m+2}(A)+P_4^m(A)).\label{eq:ho-est-1}
  \end{align}
  With $\phi=\nabla^mA$ and $Y=\partial_t\phi+\Delta^2\phi$, \cite[Lemma~3.2]{kuwertschaetzle2002} yields for some $C=C(s)$
  \begin{align}
    \frac{\dd}{\dd t}&\int_{\Sigma}|\phi|^2\gamma^s\dd\mu+\int_{\Sigma}|\nabla^2\phi|^2\gamma^s\dd\mu\\
    &\leq 2\int_{\Sigma}\langle Y,\phi\rangle\gamma^s\dd\mu+\int_{\Sigma} A*\phi*\phi*\xi\ \gamma^s\dd\mu + \int_{\Sigma}|\phi|^2s\gamma^{s-1}\partial_t\gamma\dd\mu\\
    &\quad + C \int_{\Sigma}|\phi|^2\gamma^{s-4}\big(|\nabla\gamma|^4+\gamma^2|\nabla^2\gamma|^2\big)\dd\mu + C\int_{\Sigma}|\phi|^2\big(|\nabla A|^2+|A|^4\big)\gamma^s\dd\mu. \label{eq:ho-est-3}
  \end{align}
  We now analyze the terms on the right-hand side of \eqref{eq:ho-est-3} and show that they can be suitably estimated and absorbed on the left-hand side to yield the claim, i.e. \eqref{eq:ho-est}. For the first, second and last term on the right-hand side of \eqref{eq:ho-est-3}, using \eqref{eq:ho-est-0} and \eqref{eq:ho-est-1},
  \begin{align}
    2&\int_{\Sigma}\langle Y,\phi\rangle\gamma^s\dd\mu+\int_{\Sigma} A*\phi*\phi*\xi\ \gamma^s\dd\mu + C\int_{\Sigma}|\phi|^2\big(|\nabla A|^2+|A|^4\big)\gamma^s\dd\mu\\
    &=\int_{\Sigma} \big(P_3^{m+2}(A)+P_5^m(A)\big)*\phi\ \gamma^s\dd\mu  + (\lambda+c_0^2) \int_{\Sigma}\big(P_1^{m+2}(A)+P_3^m(A)\big)*\phi\ \gamma^s\dd\mu\\
    &\quad + c_0 \int_{\Sigma}\big(P_2^{m+2}(A)+P_4^m(A)\big)*\phi\ \gamma^s\dd\mu.
  \end{align}
  The term $\int_{\Sigma} \big(P_3^{m+2}(A)+P_5^m(A)\big)*\phi\ \gamma^s\dd\mu$ can be absorbed in \eqref{eq:ho-est} using \cite[Equation~(4.15)]{kuwertschaetzle2002}. Moreover, by Young's inequality, for $\eta>0$,
  \begin{align}
    (\lambda+c_0^2) \int_{\Sigma}P_1^{m+2}(A)*\phi\ \gamma^s\dd\mu \leq \eta \int_{\Sigma}|\nabla^2\phi|^2\gamma^s\dd\mu + C(\eta)(\lambda^2+c_0^4) \int_{\Sigma} |\phi|^2\gamma^s\dd\mu.
  \end{align}
  Furthermore, interpolating with $k=m$, $r=4$ in \cite[Corollary~5.5]{kuwertschaetzle2002}, 
  \begin{align}
    &(\lambda+c_0^2) \int_{\Sigma}P_3^{m}(A)*\phi\ \gamma^s\dd\mu \leq C (|\lambda|+c_0^2)\|A\|^2_{L^{\infty}(\{\gamma>0\})} \Big(\int_{\Sigma}|\phi|^2\gamma^s\dd\mu+\int_{\{\gamma>0\}}|A|^2\dd\mu\Big)\\
    &\quad \leq C\big(\lambda^2+c_0^4+\|A\|^4_{L^{\infty}(\{\gamma>0\})}\big) \Big(\int_{\Sigma}|\phi|^2\gamma^s\dd\mu+\int_{\{\gamma>0\}}|A|^2\dd\mu\Big).
  \end{align}
  Additionally, interpolating with $k=m$ and $r=5$ in \cite[Corollary~5.5]{kuwertschaetzle2002} and using Young's inequality with $p=4$ and $p'=\frac{4}{3}$,
  \begin{align}
    c_0\int_{\Sigma}P_4^m(A)*\phi\ \gamma^s\dd\mu &\leq C |c_0|\|A\|_{L^{\infty}(\{\gamma>0\})}^3\Big(\int_{\Sigma}|\phi|^2\gamma^s\dd\mu+\int_{\{\gamma>0\}}|A|^2\dd\mu\Big)\\
    &\leq C(c_0^4+\|A\|_{L^{\infty}(\{\gamma>0\})}^4)\Big(\int_{\Sigma}|\phi|^2\gamma^s\dd\mu+\int_{\{\gamma>0\}}|A|^2\dd\mu\Big).
  \end{align}
  Finally, for $\eta>0$, applying \cite[Corollary~5.5]{kuwertschaetzle2002} with $k=m+1$ and $r=3$, using $s\geq 2k=2m+4\geq 2m+2$ and repeatedly using Young's inequality,
  \begin{align}
    &c_0\int_{\Sigma}P_2^{m+2}(A)*\phi\ \gamma^s\dd\mu \leq C |c_0|\Big|\int_{\Sigma} \nabla^{m+2}A*\nabla^mA*A\ \gamma^s\dd\mu\Big|\\
    &\quad + C|c_0| \|A\|_{L^{\infty}(\{\gamma>0\})}\Big(\int_{\Sigma}|\nabla\phi|^2\gamma^s\dd\mu+\int_{\{\gamma>0\}}|A|^2\dd\mu\Big)\\
    &\leq \eta \int_{\Sigma}|\nabla^2\phi|^2\gamma^s\dd\mu + C(\eta) c_0^2\|A\|_{L^{\infty}(\{\gamma>0\})}^2\int_{\Sigma}|\phi|^2\gamma^s\dd\mu\\
    &\quad + C(1+c_0^4+\|A\|_{L^{\infty}(\{\gamma>0\})}^4) \int_{\{\gamma>0\}}|A|^2\dd\mu + C|c_0| \|A\|_{L^{\infty}(\{\gamma>0\})}\int_{\Sigma}|\nabla\phi|^2\gamma^s\dd\mu.
  \end{align}
  For the last term in the above, using \cite[Lemma~5.1]{kuwertschaetzle2002} with $p=q=2r=2$ and $\alpha=0$, $\beta=1$ and $t=0$,
  \begin{align}
    |c_0| &\ \|A\|_{L^{\infty}(\{\gamma>0\})}\int_{\Sigma}|\nabla\phi|^2\gamma^s\dd\mu \\
    &\leq C |c_0| \|A\|_{L^{\infty}(\{\gamma>0\})} \Big(\int_{\Sigma}|\phi|^2\gamma^s\dd\mu\Big)^{\frac12}\Big(\int_{\Sigma}|\nabla^2\phi|^2\gamma^s\dd\mu\Big)^{\frac12}\\
    &\quad + C |c_0| \|A\|_{L^{\infty}(\{\gamma>0\})} \Big(\int_{\Sigma}|\phi|^2\gamma^s\dd\mu\Big)^{\frac12}\Big(\int_{\Sigma}|\nabla\phi|^2\gamma^{s-2}\dd\mu\Big)^{\frac12}\\
    &\leq \eta \int_{\Sigma}|\nabla^2\phi|^2\gamma^s\dd\mu + C(\eta) c_0^2\|A\|^2_{L^{\infty}(\{\gamma>0\})} \int_{\Sigma}|\phi|^2\gamma^s\dd\mu + C(\eta) \int_{\Sigma}|\nabla\phi|^2\gamma^{s-2}\dd\mu.
  \end{align}
  Here, the first three terms can now easily be absorbed/estimated to fit \eqref{eq:ho-est} while for the last term, one uses \cite[Equation~(3.11)]{kuwertschaetzle2002} which reads
  \begin{equation}\label{eq:kusch02-3.11}
    \int_{\Sigma}|\phi|^2\gamma^{s-4}\dd\mu + \int_{\Sigma}|\nabla\phi|^2\gamma^{s-2}\dd\mu \leq \eta\int_{\Sigma}|\nabla^2\phi|^2\gamma^s\dd\mu + C_{\eta} \int_{\{\gamma>0\}}|A|^2\gamma^{s-4-2m}\dd\mu.
  \end{equation}
  Now turning to the third term on the right-hand side in \eqref{eq:ho-est-3}, using $\gamma=\widetilde{\gamma}\circ f$, one finds
  \begin{equation}
    \int_{\Sigma}|\phi|^2\gamma^{s-1}\partial_t\gamma\dd\mu = \int_{\Sigma}|\phi|^2\gamma^{s-1}\langle D\widetilde{\gamma}\circ f,\nu\rangle \big(-\Delta H + P_3^0(A)+(\lambda+c_0^2)P_1^0(A)+c_0P_2^0(A)\big)\dd\mu.
  \end{equation}
  By the computation on \cite[p. 320]{kuwertschaetzle2002}, using \eqref{eq:kusch02-3.11}, for $\eta>0$,
  \begin{align}
    \int_{\Sigma}|\phi|^2\gamma^{s-1}\langle D\widetilde{\gamma}\circ f,\nu\rangle\Delta H\dd\mu &\leq C \int_{\Sigma} |\nabla\phi|^2\gamma^{s-2}\dd\mu + C \int_{\Sigma} |\phi|^2\gamma^{s-4}\dd\mu \\
    &\quad + \int_{\Sigma} \big(P_3^{m+2}(A)+P_5^m(A)\big)*\phi \ \gamma^s\dd\mu \\
    &\leq \eta \int_{\Sigma} |\nabla^2\phi|^2\gamma^s\dd\mu + C(\eta) \int_{\{\gamma>0\}} |A|^2\gamma^{s-4-2m}\dd\mu  \\
    &\quad + \int_{\Sigma} \big(P_3^{m+2}(A)+P_5^m(A)\big)*\phi \ \gamma^s\dd\mu.
  \end{align}
  Using Young's inequality,
  \begin{align}
    &\int_{\Sigma}|\phi|^2\gamma^{s-1}\langle D\widetilde{\gamma}\circ f,\nu\rangle P_3^0(A)\dd\mu \leq C\int_{\Sigma} |\phi|^2|A|^4\gamma^s\dd\mu + C \int_{\Sigma} |\phi|^2\gamma^{s-4}\dd\mu;\\
    &\int_{\Sigma} |\phi|^2\gamma^{s-1}\langle D\widetilde{\gamma}\circ f,\nu\rangle (\lambda+c_0^2)P_1^0(A) \leq C (\lambda^2+c_0^4)\int_{\Sigma}|\phi|^2\gamma^{s}\dd\mu + C \int_{\Sigma}|\phi|^2|A|^2\gamma^{s-2}\dd\mu\\
    &\quad \leq C(\lambda^2+c_0^4+\|A\|^4_{L^{\infty}(\{\gamma>0\})})\int_{\Sigma}|\phi|^2\gamma^s\dd\mu + C\int_{\Sigma}|\phi|^2\gamma^{s-4}\dd\mu;\\
    &\int_{\Sigma} |\phi|^2\gamma^{s-1}\langle D\widetilde{\gamma}\circ f,\nu\rangle c_0P_2^0(A)\dd\mu \leq Cc_0^2\int_{\Sigma}|\phi|^2\gamma^s|A|^2\dd\mu + C\int_{\Sigma}|\phi|^2\gamma^{s-2}|A|^2\dd\mu\\
    &\quad\leq C (c_0^4+\|A\|^4_{L^{\infty}(\{\gamma>0\})})\int_{\Sigma}|\phi|^2\gamma^s\dd\mu + C \int_{\Sigma} |\phi|^2\gamma^{s-4}\dd\mu.
  \end{align}
  By \eqref{eq:kusch02-3.11}, all the above terms can be absorbed in \eqref{eq:ho-est}, using $s\geq 2m+4$ and $\gamma\leq 1$. For the penultimate term on the right-hand side in \eqref{eq:ho-est-3}, one proceeds exactly as on \cite[p.~321]{kuwertschaetzle2002}. This finishes the proof of \eqref{eq:ho-est}.
\end{proof}

\paragraph*{Acknowledgments.}
The author would like to thank Anna Dall’Acqua for helpful discussions and comments.

\paragraph*{Declaration of interests.}
The author declares that they have no known competing financial interests or personal relationships that could have appeared to influence the work reported in this paper.


\end{document}